
\documentclass{amsart}
\usepackage{longtable}
\usepackage{amsfonts,amsmath,amsthm,amscd,amssymb,latexsym,amsbsy,pb-diagram,caption,url}
\usepackage[noadjust]{cite}
\usepackage{tikz}
\usetikzlibrary{positioning,trees, decorations.markings}
\usepackage[english]{babel}

\newtheorem{theorem}{Theorem}[section]
\newtheorem{corollary}[theorem]{Corollary}
\newtheorem{lemma}[theorem]{Lemma}
\newtheorem{proposition}[theorem]{Proposition}

\newtheorem{example}[theorem]{Example}
\newtheorem{problem}{Problem}

\theoremstyle{definition}
\newtheorem{definition}[theorem]{Definition}

\theoremstyle{remark}
\newtheorem{remark}[theorem]{Remark}
\numberwithin{equation}{section}

\DeclareMathOperator{\diam}{diam}

\numberwithin{equation}{section}

\makeatletter

\renewcommand{\p@enumii}{}
\makeatother

\begin{document}

\title{Isomorphism of trees and isometry of ultrametric spaces}
\author{Oleksiy Dovgoshey}
\address{Functions Theory Department, Institute of Applied Mathematics and Mechanics of NASU, Dobrovolskogo str. 1, Slovyansk 84100, Ukraine}
\email{oleksiy.dovgoshey@gmail.com}

\begin{abstract}
We study the conditions under which the isometry of spaces with metrics generated by weights given on the edges of finite trees is equivalent to the isomorphism of these trees. Similar questions are studied for ultrametric spaces generated by labelings given on the vertices of trees. The obtained results generalized some facts previously known for phylogenetic  trees and for Gurvich---Vyalyi monotone trees.
\end{abstract}

\keywords{Monotone tree, equidistant tree, phylogenetic tree, planted tree, finite ultrametric space, isometry of ultrametric spaces, isomorphism of trees, star} 

\subjclass[2010]{54E35, 05C05} 

\maketitle

\section{Introduction}

In 2001 at the Workshop on General Algebra the attention of experts on the theory of lattices was paid to the following problem of I.~M.~Gelfand: Using graph theory describe up to isometry all finite ultrametric spaces~\cite{Lem2001}. An appropriate representation of ultrametric spaces by monotone rooted trees was proposed by V.~Gurvich and M.~Vyalyi in~\cite{GV2012DAM}. A simple geometric description of Gurvich---Vyalyi representing trees was found in \cite{PD2014JMS}. This description allows us effectively use the Gurvich---Vyalyi representation in various problems associated with finite ultrametric spaces. In particular, this leads to a graph-theoretic interpretation of the Gomory-Hu Inequality \cite{DPT2015}. A characterization of finite ultrametric spaces which are as rigid as possible also was obtained \cite{DPT2017FPTA} on the basis of the Gurvich---Vyalyi representation. Some other extremal properties of finite ultrametric spaces and related them properties of monotone rooted trees have been found in~\cite{DP2020pNUAA}. The interconnections between the Gurvich---Vyalyi representation and the space of balls endowed with the Hausdorff metric are discussed in~\cite{Dov2019pNUAA} (see also \cite{Qiu2009pNUAA, Qiu2014pNUAA, DP2018pNUAA, Pet2018pNUAA, Pet2013PoI}).

It is well-known that the sets of leaves of phylogenetic equidistant trees with additive metric are ultrametric. The finite equidistant trees can be considered as finite subtrees of the so-called \(R\)-trees (see~\cite{Ber2019SMJ} for some interesting results related to \(R\)-trees and ultrametrics). The categorical equivalence of trees and ultrametric spaces was investigated in \cite{H04} and \cite{Lem2003AU}. 

It is interesting to note that, in fact, both the monotone trees and equidistant trees were used in the phylogenetic for the description of related ultrametric spaces long before the publication of paper \cite{GV2012DAM} (see, for example, \cite{SS2003OUP}). The monotone trees and the equidistant trees are dual in a certain sense, but it seems that the description of this duality can be found in Section~7.1 of book~\cite{SS2003OUP} only. 

In the present paper we discuss the interrelations between the weighted trees with additive metrics and labeled trees with corresponding ultrametrics. In particular, the duality of equidistant trees and monotone trees is studied in details for trees which are more general than the classical phylogenetic trees.

The paper is organized as follows. 

Section~\ref{sec2} contains some standard definitions from the theory of metric spaces and the theory of graphs. A short list of known properties of Gurvich---Vyalyi representing trees is also given there.

Section~\ref{sec3} deals with the weights and labelings on unrooted trees. In Theorem~\ref{t3.4} we prove that, for all weights, the isomorphisms of weighted trees coincide with isometries of metric spaces endowed with the corresponding additive metrics and, in Proposition~\ref{p3.4}, we show that this property characterizes the trees among all connected weighted graphs. Proposition~\ref{p3.5} describes conditions under which the labelings on the vertex sets of trees generate ultrametrics. In Theorem~\ref{t3.9} it is shown that under the same conditions every connected labeled graph \(G\) contains a spanning tree \(T\) such that labeling, induced on \(T\), generates the same ultrametric as an original labeling on \(G\). Theorem~\ref{t3.12}, one of the main results of the section, shows that, in the contrast with the weighted trees, the isomorphisms of labeled trees coincide with isometries of generated ultrametric spaces only for trees with one vertex.

In Section~\ref{sec4}, after defining the concepts of equidistant weight and monotone labeling for the case of an arbitrary rooted tree, we find explicit formulas describing the transition between these weights and labelings in Proposition~\ref{p6.1}. Good functorial properties of such transition are described by Proposition~\ref{p3.3}.

Theorem~\ref{t6.4}, Theorem~\ref{c3.13} and Theorem~\ref{t4.13} are generalizations of the well-known fact about representation of finite ultrametric spaces by phylogenetic trees and by Gurvich---Vyalyi monotone trees.

Proposition~\ref{p6.10} describes the necessary and sufficient conditions under which isomorphism of equidistant trees (monotone trees) is equivalent to isometricity of corresponding ultrametric spaces.

Section~\ref{sec5} of the paper contains some characteristic properties of ultrametric spaces of leaves of equidistant rooted trees with pedant roots. In particular, Proposition~\ref{p3.9} describes a new characteristic property of stars in the language of equidistant weights.

\section{Initial definitions and facts}\label{sec2}

Let us recall some concepts from the theory of metric spaces and the graph theory.

A \textit{metric} on a set $X$ is a function $d\colon X\times X\rightarrow \mathbb{R}^+$, $\mathbb R^+ = [0,\infty)$, such that for all \(x\), \(y\), \(z \in X\)
\begin{enumerate}
\item $d(x,y)=d(y,x)$,
\item $(d(x,y)=0)\Leftrightarrow (x=y)$,
\item \(d(x,y)\leq d(x,z) + d(z,y)\).
\end{enumerate}
The quantity
$$
\diam X=\sup\{d(x,y)\colon x,y\in X\}
$$
is the \emph{diameter} of $(X,d)$. 

A metric space \((X, d)\) is \emph{ultrametric} if the \emph{strong triangle inequality}
\[
d(x,y)\leq \max \{d(x,z),d(z,y)\}.
\]
holds for all \(x\), \(y\), \(z \in X\). In this case \(d\) is called \emph{an ultrametric} on \(X\) and \((X, d)\) is an \emph{ultrametric space}.

\begin{definition}\label{d1.1}
Metric spaces \((X, d)\) and \((Y, \rho)\) are \emph{isometric} if there is a bijective mapping \(\Phi \colon X \to Y\) such that
\[
d(x,y) = \rho(\Phi(x), \Phi(y))
\]
holds for all \(x\), \(y \in X\). In this case we write \((X, d) \simeq (Y, \rho)\) and say that \(\Phi\) is an \emph{isometry} of \((X, d)\) and \((Y, \rho)\).
\end{definition}

A \textit{graph} is a pair $(V,E)$ consisting of a nonempty set $V$ and a (probably empty) set $E$ whose elements are unordered pairs of different points from $V$. For a graph $G=(V,E)$, the sets $V=V(G)$ and $E=E(G)$ are called \textit{the set of vertices (or nodes)} and \textit{the set of edges}, respectively. We say that \(G\) is \emph{nonempty} if \(E(G) \neq \varnothing\). If $\{x,y\} \in E(G)$, then the vertices $x$ and $y$ are \emph{adjacent}. Recall that a \emph{path} is a nonempty graph $P$ whose vertices can be numbered so that
$$
V(P) = \{x_0,x_1,...,x_k\},\ k \geqslant 1, \quad \text{and} \quad E(P) = \{\{x_0,x_1\},...,\{x_{k-1},x_k\}\}.
$$
In this case we say that \(P\) is a path joining \(x_0\) and \(x_k\).

A graph \(G\) is \emph{finite} if \(V(G)\) is a finite set, \(|V(G)| < \infty\). In this paper we consider finite graphs only. A finite graph $C$ is a \textit{cycle} if $|V(C)|\geq 3$ and there exists an enumeration \(v_1\), \(v_2\), \(\ldots\), \(v_n\) of its vertices such that
\begin{equation*}
(\{v_i,v_j\}\in E(C))\Leftrightarrow (|i-j|=1\quad \mbox{or}\quad |i-j|=n-1).
\end{equation*}
A graph $H$ is a \emph{subgraph} of a graph $G$ if
$$
V(H) \subseteq V(G) \quad \text{and} \quad E(H) \subseteq E(G).
$$
We write $H\subseteq G$ if $H$ is a subgraph of $G$.

A graph \(G\) is \emph{connected} if for every two distinct \(u\), \(v \in V(G)\) there is a path \(P \subseteq G\) joining \(u\) and \(v\). A connected graph without cycles is called a \emph{tree}. A vertex $v$ of a tree $T$ is a \emph{leaf} if the \emph{degree} of $v$ is less than two,
$$
\delta(v) = |\{u \in V(T) \colon \{u, v\} \in E(T)\}| < 2.
$$
If a vertex $v$ is not a leaf of $T$, then we say that $v$ is an \emph{internal node} of $T$. 

A tree $T$ may have a distinguished vertex $r$ called the \emph{root}; in this case $T$ is called a \emph{rooted tree} and we write $T=T(r)$. 

Let $T = T(r)$ be a rooted tree and let $v$ be a vertex of $T$. Denote by $\delta^+(v)$ the \emph{out-degree} of $v$,
\[
\delta^+(v) = \begin{cases}
\delta(v) & \text{if } v = r,\\
\delta(v)-1 & \text{if } v \neq r.
\end{cases}
\]
The root $r$ is a leaf of $T$ if and only if $\delta^+(r) \leqslant 1$. Moreover, for a vertex $v$ different from the root $r$, the equality $\delta^+(v) = 0$ holds if and only if $v$ is leaf of $T$. 

\begin{definition}\label{d2.2}
A \emph{labeled tree} $T=T(l)$ is a tree $T$ with a \emph{labeling} \(l\colon V(T)\to \mathbb{R}^{+}\). A \emph{weighted tree} \(T = T(w)\) is a nonempty tree \(T\) with a \emph{weight} \(w \colon E(T) \to \mathbb{R}^{+}\).
\end{definition}

Thus, in what follows, we assume that the labels on the tree vertices are some nonnegative numbers.

We also use the notion of complete multipartite graph.

\begin{definition}\label{def3.1}
A nonempty graph $G$ is called \emph{complete $k$-partite} if its vertices can be divided into disjoint nonempty sets $X_1$, $\ldots$, $X_k$ so that $k \geqslant 2$ and there are no edges joining the vertices of the same set $X_i$ and any two vertices from different $X_i,X_j$, $1\leqslant i,j \leqslant k$ are adjacent. In this case we write $G=G[X_1,...,X_k]$.
\end{definition}
We shall say that $G$ is a {\it complete multipartite graph} if there exists $k$ such that $G$ is complete $k$-partite.

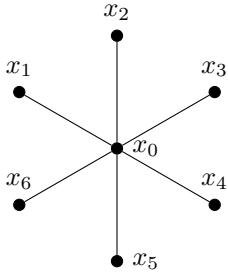
\begin{figure}[h]
\begin{center}
\begin{tikzpicture}[grow cyclic,
level 1/.style={level distance=15mm, sibling angle=60},
level 2/.style={level distance=15mm,sibling distance=12mm},
level 3/.style={level distance=15mm,sibling distance=6mm},
solid node/.style={circle,draw,inner sep=1.5,fill=black},
hollow node/.style={circle,draw,inner sep=1.5}]

\node (root) [solid node, label=right:{\(x_0\)}] at (0,0) {}
child {node(1) [solid node, label=above:{\(x_6\)}] {}}
child {node(2) [solid node, label=right:{\(x_5\)}] {}}
child {node(3) [solid node, label=above:{\(x_4\)}] {}}
child {node(3) [solid node, label=above:{\(x_3\)}] {}}
child {node(3) [solid node, label=above:{\(x_2\)}] {}}
child {node(3) [solid node, label=above:{\(x_1\)}] {}};
\end{tikzpicture}
\end{center}
\caption{A star \(G[X_1, X_2]\), \(X_1 = \{x_0\}\), \(X_2 = \{x_1, x_2, x_3, x_4, x_5, x_6\}\).}
\label{fig1.1}
\end{figure}

In particular, a \emph{star} is a complete bipartite graph \(G[X_1, X_2]\) with \(|X_1| = 1\) or \(|X_2| = 1\) \cite[p.~4]{BM}.

\begin{definition}[\cite{DDP(P-adic)}]\label{d14}
Let $(X,d)$ be a finite ultrametric space with \(|X| \geqslant 2\). Define a graph \(G_{D, X}\) as \(V(G_{D, X}) = X\) and 
\[
(\{u,v\}\in E(G_{D, X}))\Leftrightarrow (d(u,v)=\diam X)
\]
for all \(u\), \(v \in V(G_{D, X})\). We call $G_{D, X}$ the \emph{diametrical graph} of $X$.
\end{definition}

\begin{theorem}[\cite{DDP(P-adic)}]\label{t13}
Let $(X,d)$ be a finite ultrametric space, $|X|\geqslant 2$. Then $G_{D,X}$ is complete multipartite.
\end{theorem}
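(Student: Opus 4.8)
The plan is to show that the diametrical graph $G_{D,X}$ admits a partition of its vertex set $X$ into nonempty classes with no edges inside a class and all edges between distinct classes, i.e.\ that non-adjacency is an equivalence relation on $X$. Concretely, for $u,v\in X$ write $u\sim v$ if either $u=v$ or $d(u,v)<\diam X$ (equivalently, $\{u,v\}\notin E(G_{D,X})$); I want to prove $\sim$ is an equivalence relation.

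Reflexivity and symmetry are immediate from the definition and the symmetry of $d$. The key step is transitivity, and this is where the strong triangle inequality does the work. Suppose $u\sim v$ and $v\sim w$ with $u,v,w$ pairwise distinct; then $d(u,v)<\diam X$ and $d(v,w)<\diam X$, so by the strong triangle inequality $d(u,w)\leq\max\{d(u,v),d(v,w)\}<\diam X$, hence $u\sim w$. (If two of $u,v,w$ coincide the conclusion is trivial.) Thus $\sim$ partitions $X$ into equivalence classes $X_1,\ldots,X_k$, each nonempty, and $k\geq 2$ because $|X|\geq 2$ and there exist points at distance $\diam X$ (the diameter is attained on a finite space), so not all points lie in one class. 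By construction there are no edges of $G_{D,X}$ within any $X_i$, while any two vertices from distinct classes are at distance exactly $\diam X$ and hence adjacent. This is precisely the definition of a complete $k$-partite graph, so $G_{D,X}=G[X_1,\ldots,X_k]$ is complete multipartite.

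I also need to note that $G_{D,X}$ is nonempty, as required in the definition of complete multipartite: since $X$ is finite with $|X|\geq 2$, the supremum defining $\diam X$ is attained, so there are $u,v$ with $d(u,v)=\diam X$, giving an edge. The only mild subtlety is the degenerate case $\diam X = 0$, which would force $|X|=1$ by the positivity axiom $(ii)$ for metrics and is excluded by the hypothesis $|X|\geq 2$; so $\diam X>0$ and the attainment argument is genuine.

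The main (indeed only) obstacle is the transitivity of $\sim$, and it is not really an obstacle at all: it is exactly the content of the strong triangle inequality in its contrapositive form, namely that among the three pairwise distances at least the two largest are equal, so one cannot have exactly one of the three distances equal to the maximum value $\diam X$. Everything else is bookkeeping about partitions and the attainment of the diameter on a finite set.
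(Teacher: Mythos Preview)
Your proof is correct. The paper does not actually give its own proof of this theorem---it is stated with a citation to \cite{DDP(P-adic)} and used as a known result---but your argument is exactly the standard one: non-adjacency in $G_{D,X}$ is an equivalence relation because the strong triangle inequality forces $d(u,w)\leq\max\{d(u,v),d(v,w)\}<\diam X$ whenever $d(u,v),d(v,w)<\diam X$, and the resulting equivalence classes give the multipartition.
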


For every nonempty, finite ultrametric space $(X, d)$ we can associate a labeled rooted tree $T_X=T_X(r,l)$ with $r=X$ and $l\colon V(T_{X})\to \mathbb{R}^{+}$ by the following rule (see~\cite{PD2014JMS}).

If $X$ is a one-point set, then $T_X$ is the rooted tree consisting of the node $X$ with the label~$\diam X=0$. Note that for the rooted trees consisting only of one node, we consider that this node is the root as well as a leaf.

Let $|X|\geqslant 2$. According to Theorem~\ref{t13} we have $G_{D,X} = G_{D,X}[X_1,...,X_k]$ and $k \geqslant 2$. In this case the root of the tree $T_X$ is labeled by $\diam X$ and $T_X$ has the nodes $X_1,...,X_k$ of the first level with the labels
\begin{equation}\label{e2.7}
l(X_i)= \diam X_i,
\end{equation}
$i = 1,...,k$. The nodes of the first level with the label $0$ are leaves, and those indicated by strictly positive labels are internal nodes of the tree $T_X$. If the first level has no internal nodes, then the tree $T_X$ is constructed. Otherwise, by repeating the above-described procedure with $X_1,...,X_k$ instead of $X$, we obtain the nodes of the second level, etc. Since $X$ is finite, all vertices on some level will be leaves, and the construction of $T_X$ is completed.

The above-constructed labeled rooted tree $T_X$ is called the \emph{representing tree} of the ultrametric space $(X, d)$. 

In the next sections we will need several different concepts of isomorphism of trees.

\begin{definition}\label{d1.5}
Let $T_1$ and $T_2$ be trees. A bijection $f\colon V(T_1)\to V(T_2)$ is an \emph{isomorphism} of $T_1$ and $T_2$ if
\begin{equation}\label{d1.5:e1}
(\{u,v\} \in E(T_1)) \Leftrightarrow (\{f(u),f(v)\} \in E(T_2))
\end{equation}
is valid for all $u$, $v \in V(T_1)$. The trees $T_1$ and $T_2$ are \emph{isomorphic} if there exists an isomorphism $f\colon V(T_1) \to V(T_2)$.

Let $T_1 = T_1(r_1)$ and $T_2 = T_2(r_2)$ be rooted trees. Then $T_1$ and $T_2$ are \emph{isomorphic as rooted trees} if there is a bijection $f\colon V(T_1) \to V(T_2)$ such that $f(r_1) = r_2$ and \eqref{d1.5:e1} is valid for all \(u\), \(v \in V(T_1)\).
\end{definition}

For the case of labeled trees the above definition must be modified as follows.

\begin{definition}\label{d1.6}
Let $T_i=T_i(l_i)$ be labeled trees with the labelings $l_i\colon V(T_i)\to \mathbb{R}^{+}$, \(i = 1\), \(2\). An isomorphism $f\colon V(T_1) \to V(T_2)$ of the trees $T_1$ and $T_2$ is an isomorphism of the labeled trees $T_1(l_1)$ and $T_2(l_2)$ if the equality
\begin{equation}\label{d1.6e1}
l_2(f(v))=l_1(v)
\end{equation}
holds for every $v \in V(T_1)$. 

Analogously, for weighted trees \(T_1 = T_1(w_1)\) and \(T_2 = T_2(w_2)\) an isomorphism \(f \colon V(T_1) \to V(T_2)\) of \(T_1\) and \(T_2\) is an isomorphism of  \(T_1(w_1)\) and \(T_2(w_2)\) if the equality
\begin{equation}\label{d1.6e2}
w_2(\{f(u), f(v)\}) = w_1(\{u, v\})
\end{equation}
holds for every \(\{u, v\} \in E(T_1)\).
\end{definition}

The formulas
\[
T_1(l_1) \simeq T_2(l_2) \quad \text{and} \quad T_1(w_1) \simeq T_2(w_2)
\]
mean that the labeled trees \(T_1(l_1)\), \(T_2(l_2)\) are isomorphic and, respectively, that the weighted trees \(T_1(w_1)\), \(T_2(w_2)\) are isomorphic.

\begin{definition}\label{d2.8}
Let \(T_1 = T_1(r_1, l_1)\) and \(T_2 = T_2(r_2, l_2)\) be labeled rooted trees. An isomorphism \(f \colon V(T_1) \to V(T_2)\) of the free (unrooted, without labelings) trees  \(T_1\) and \(T_2\) is an isomorphism of \(T_1(r_1, l_1)\) and \(T_2(r_2, l_2)\) if it is an isomorphism of \(T_1(r_1)\) and \(T_2(r_2)\) and, simultaneously, an isomorphism of \(T_1(l_1)\) and \(T_2(l_2)\) (\(T_1(w_1)\) and \(T_2(w_2)\)).

The labeled rooted trees \(T_1(r_1, l_1)\) and \(T_2(r_2, l_2)\) are isomorphic if there is an isomorphism of these trees. In this case we write \(T_1(r_1, l_1) \simeq T_2(r_2, l_2)\).

Analogously, we define an isomorphism of weighted rooted trees \(T_1(r_1, w_1)\), \(T_2(r_2, w_2)\) and write \(T_1(r_1, w_1) \simeq T_2(r_2, w_2)\) if \(T_1(r_1, w_1)\) and \(T_2(r_2, w_2)\) are isomorphic as weighted rooted trees.
\end{definition}

\begin{theorem}[\cite{DPT2017FPTA}]\label{t2.7}
Let \((X, d)\) and \((Y, \rho)\) be nonempty, finite ultrametric spaces with the representing trees \(T_X\) and \(T_Y\).  Then 
\[
((X, d) \simeq (Y, \rho)) \Leftrightarrow (T_X \simeq T_Y)
\]
is valid.
\end{theorem}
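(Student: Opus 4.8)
The plan is to prove both implications at once by induction on $|X| + |Y|$, peeling off the top level of each representing tree with the help of Theorem~\ref{t13}; throughout, I read ``$T_X \simeq T_Y$'' in the sense of Definition~\ref{d2.8}, i.e.\ as an isomorphism of labeled rooted trees. The base case $|X| = |Y| = 1$ is immediate, since then $T_X$ and $T_Y$ each reduce to a single node labeled $0$ and both spaces are one-point spaces; and if exactly one of $|X|, |Y|$ equals $1$, then neither $(X,d) \simeq (Y,\rho)$ nor $T_X \simeq T_Y$ can hold, because the leaves of a representing tree $T_X$ are precisely the one-point subsets $\{x\}$, $x \in X$, so $T_X$ has exactly $|X|$ leaves. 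Thus in the inductive step I may assume $|X|, |Y| \geqslant 2$.

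The key preliminary observation I would isolate first is this: if $G_{D,X} = G_{D,X}[X_1, \dots, X_k]$ with $k \geqslant 2$, as provided by Theorem~\ref{t13}, then $\diam X_i < \diam X$ for every $i$ --- the supremum defining $\diam X_i$ is attained on a finite set, and within a single part of the complete multipartite graph $G_{D,X}$ no distance equals $\diam X$ --- and consequently the rooted subtree of $T_X$ hanging below the first-level node $X_i$ is exactly the representing tree $T_{X_i}$ of the subspace $(X_i, d|_{X_i \times X_i})$, labeling included. This is essentially an unwinding of the recursive definition of $T_X$, but it is the hinge of the argument: it lets me pass between the space $(X, d)$ and the family $(X_i, d|_{X_i})$, $i = 1, \dots, k$, on the metric side and on the tree side alike. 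This is also the step where I expect the only genuine (if modest) technical care to be required.

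For $(X, d) \simeq (Y, \rho) \Rightarrow T_X \simeq T_Y$: an isometry $\Phi$ forces $\diam X = \diam Y$ and preserves the relation ``$d(u,v) = \diam X$'', hence is an isomorphism of diametrical graphs $G_{D,X} \to G_{D,Y}$. An isomorphism of complete multipartite graphs carries parts onto parts, so $G_{D,Y}$ also has $k$ parts and, after reindexing, $\Phi(X_i) = Y_i$, with $\Phi|_{X_i}$ an isometry of the strictly smaller spaces $(X_i, d|_{X_i})$ and $(Y_i, \rho|_{Y_i})$. By the induction hypothesis $T_{X_i} \simeq T_{Y_i}$ as labeled rooted trees, and splicing these isomorphisms together while sending the root of $T_X$ to that of $T_Y$ gives $T_X \simeq T_Y$: the first-level labels agree because $l(X_i) = \diam X_i = \diam Y_i = l(Y_i)$, and the root labels agree because $\diam X = \diam Y$.

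For the converse $T_X \simeq T_Y \Rightarrow (X, d) \simeq (Y, \rho)$: let $f$ be an isomorphism of the labeled rooted trees, and write $G_{D,Y} = G_{D,Y}[Y_1, \dots, Y_m]$. Since $f$ preserves the root and adjacency, it maps first-level nodes of $T_X$ bijectively onto those of $T_Y$, so $m = k$ and, by the preliminary observation, $f$ restricts for a suitable permutation $\tau$ to an isomorphism $T_{X_i} \simeq T_{Y_{\tau(i)}}$ of labeled rooted trees; comparing root labels also gives $\diam X = \diam Y$. The induction hypothesis makes each $(X_i, d|_{X_i})$ isometric to $(Y_{\tau(i)}, \rho|_{Y_{\tau(i)}})$ via some $\Phi_i$, and since $\{X_i\}$ partitions $X$ and $\{Y_j\}$ partitions $Y$, the $\Phi_i$ glue to a bijection $\Phi \colon X \to Y$. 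It then remains to verify that $\Phi$ is an isometry: for $x, y$ in the same $X_i$ this is the isometry property of $\Phi_i$, while for $x \in X_i$, $y \in X_j$ with $i \neq j$ both $d(x,y)$ and $\rho(\Phi(x), \Phi(y))$ equal $\diam X = \diam Y$, since distinct parts of a complete multipartite graph are completely joined and $G_{D,X}$, $G_{D,Y}$ are the diametrical graphs. Beyond the preliminary observation, all of these are routine checks.
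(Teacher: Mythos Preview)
Your proof is correct. The paper itself does not supply a proof of Theorem~\ref{t2.7}; the result is quoted from~\cite{DPT2017FPTA}, and the paper only remarks that its proof (together with those of Theorem~\ref{t1.7} and Proposition~\ref{p2.8}) is based on Theorem~\ref{t2.9}. Your argument is a clean induction exploiting the recursive construction of $T_X$ via the diametrical graph of Theorem~\ref{t13}, and it handles both implications uniformly. By contrast, the route through Theorem~\ref{t2.9} that the paper alludes to dispatches the implication $T_X \simeq T_Y \Rightarrow (X,d) \simeq (Y,\rho)$ without induction: a labeled-rooted-tree isomorphism carries leaves $\{x\}$ to leaves $\{y\}$ and paths between leaves to paths between leaves, so the path-maximum formula of Theorem~\ref{t2.9} transports distances directly. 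The forward implication $(X,d) \simeq (Y,\rho) \Rightarrow T_X \simeq T_Y$ still requires a structural argument of the kind you give (or an appeal to the ballean description of Proposition~\ref{p2.8}). Your inductive treatment is thus slightly longer on the backward implication but more self-contained, needing only Theorem~\ref{t13} and the definition of $T_X$.
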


If \(T = T(r)\) is a rooted tree and \(u\), \(v \in V(T)\) are distinct, then we say that \(v\) is a \emph{successor} of \(u\) whenever \(u \in V(P)\), where \(P\) is the path joining \(v\) and \(r\). A successor \(v\) of \(u\) is a \emph{direct successor} of \(u\) if \(\{u, v\} \in E(T)\).

The following theorem is a simple modification of Theorem~2.7 \cite{DP2019PNUAA}.

\begin{theorem}\label{t1.7}
Let $T=T(r, l)$ be a labeled rooted tree. Then the following two conditions are equivalent:
\begin{enumerate}
\item\label{t1.7:c1} For every $u \in V(T)$ we have $\delta^+(u)\neq 1$ and
$$
(\delta^+(u) =0) \Leftrightarrow (l(u)=0)
$$
and, in addition, the inequality \(l(v) < l(u)\) holds whenever $v$ is a direct successor of $u$.
\item\label{t1.7:c2} There is a finite, nonempty ultrametric space $(X,d)$ such that 
\[
T_X \simeq T(r, l).
\]
\end{enumerate}
\end{theorem}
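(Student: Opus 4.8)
The plan is to prove the two implications separately, handling \ref{t1.7:c2} $\Rightarrow$ \ref{t1.7:c1} by a direct inspection of the construction of the representing tree and concentrating the work on \ref{t1.7:c1} $\Rightarrow$ \ref{t1.7:c2}. Since both conditions in the theorem are invariant under isomorphism of labeled rooted trees, for the first implication it suffices to check that $T_X$ itself satisfies \ref{t1.7:c1}. Every internal node of $T_X$ is a set $Y\subseteq X$ with $|Y|\geqslant 2$ whose direct successors are the parts $Y_1,\dots,Y_k$ of the complete multipartite diametrical graph $G_{D,Y}$ from Theorem~\ref{t13}, with $k\geqslant 2$; hence $\delta^+(Y)\geqslant 2$, so no vertex of $T_X$ has out-degree $1$. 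A node of $T_X$ is a leaf precisely when it is a one-point set, which is precisely when its label $\diam Y$ equals $0$. Finally, if $Y_i$ is a direct successor of $Y$, then any two points realizing $\diam Y$ lie in different parts of $G_{D,Y}$, so every pair of points of $Y_i$ is at distance strictly less than $\diam Y$, and therefore $l(Y_i)=\diam Y_i<\diam Y=l(Y)$, which is the required strict monotonicity.

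For \ref{t1.7:c1} $\Rightarrow$ \ref{t1.7:c2} I would build the space directly on the (finite, nonempty) set $X$ of leaves of $T$. For distinct $x,y\in X$ let $x\wedge y$ denote the vertex closest to the leaves lying on both the path joining $x$ to $r$ and the path joining $y$ to $r$, and set $d(x,y)=l(x\wedge y)$, $d(x,x)=0$. Since $x$ and $y$ are successors of $x\wedge y$ through two distinct direct successors, $\delta^+(x\wedge y)\geqslant 2$, and hence $l(x\wedge y)>0$ by \ref{t1.7:c1}; thus $d$ is symmetric, nonnegative, and vanishes only on the diagonal. The monotonicity hypothesis shows that $l$ strictly decreases along every path from $r$ to a leaf; combined with the elementary fact that, for any three leaves $x,y,z$, two of the vertices $x\wedge y$, $x\wedge z$, $y\wedge z$ coincide while the third is a successor of (or equal to) them, this gives the strong triangle inequality. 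So $(X,d)$ is a finite nonempty ultrametric space.

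It then remains to prove $T_X\simeq T(r,l)$, which I would do via the explicit map $\Phi\colon V(T)\to 2^X$ sending $v$ to the set $\Phi(v)$ of leaves that are successors of $v$ (with $\Phi(v)=\{v\}$ when $v$ is itself a leaf). One has $\Phi(r)=X$ and $\diam\Phi(v)=l(v)$ for every $v$: the value $l(v)$ is attained as $d(x,y)$ for leaves $x,y$ descending through two distinct direct successors of $v$ (such leaves exist because $\delta^+(v)\geqslant 2$ at an internal $v$), while for arbitrary $x,y\in\Phi(v)$ the vertex $x\wedge y$ is a successor of $v$ or equals $v$, so $d(x,y)=l(x\wedge y)\leqslant l(v)$. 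Moreover, for $x,y\in\Phi(v)$ one has $d(x,y)=\diam\Phi(v)$ exactly when $x\wedge y=v$, i.e.\ exactly when $x$ and $y$ lie below different direct successors of $v$; hence the parts of the diametrical graph $G_{D,\Phi(v)}$ are precisely the sets $\Phi(c)$ with $c$ a direct successor of $v$. An induction going from the leaves of $T$ towards $r$ now matches $\Phi$ with the recursive construction of $T_X$ and shows that $\Phi$ is an isomorphism of the labeled rooted trees $T(r,l)$ and $T_X$.

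The step I expect to take the most care is this last one: verifying that the parts of each diametrical graph $G_{D,\Phi(v)}$ are exactly the images $\Phi(c)$ of the direct successors of $v$, and organizing the induction so that it reproduces, level by level, the construction of the representing tree described after Theorem~\ref{t13}. The remaining ingredients — the well-definedness and positivity of $d$, the strong triangle inequality, and the degenerate case $|V(T)|=1$ (where \ref{t1.7:c1} forces $\delta^+(r)=0$, hence $l(r)=0$, and $(X,d)$ is a one-point space) — should all be routine.
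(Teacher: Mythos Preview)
Your argument is correct. Note that the paper does not actually prove Theorem~\ref{t1.7}: it is recorded as ``a simple modification of Theorem~2.7'' from \cite{DP2019PNUAA} and left without proof, so there is no in-paper argument to compare against directly. Your construction --- defining $d(x,y)=l(x\wedge y)$ on the leaf set and then showing by induction that the diametrical parts of each $\Phi(v)$ are exactly the sets $\Phi(c)$ for $c$ a direct successor of $v$ --- is the natural route and dovetails with the paper's own machinery: since $l$ strictly decreases along root-to-leaf paths, one has $l(x\wedge y)=\max_{v^{*}\in V(P)}l(v^{*})$ for the path $P$ joining leaves $x$ and $y$, so your $d$ is precisely the restriction to $L(T)$ of the function $d_l$ from~\eqref{e3.11}, and Theorem~\ref{t2.9} confirms this is the metric encoded by $T_X$. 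The step you flag as requiring care is handled by your observation that, for $x,y\in\Phi(v)$, one has $d(x,y)=l(v)$ exactly when $x\wedge y=v$, which follows from the strict monotonicity of $l$; that identifies the diametrical partition of $\Phi(v)$ with the partition by direct successors of $v$, and the level-by-level induction then goes through.
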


Recall that a \emph{phylogenetic tree} is an unrooted tree \(T\), whose inner nodes have degree at least three, together with a labeling defined on the set of leaves of \(T\) (see, for example, \cite{SS2009MH}). Using Theorem~\ref{t13} and above described procedure of construction of representing trees we can simply prove the following result.

\begin{proposition}\label{p2.13}
The following statements are equivalent for every tree \(T\).
\begin{enumerate}
\item\label{p2.13:s1} There is a phylogenetic tree \(T_1\) such that \(T\) and \(T_1\) are isomorphic as free (unrooted, without labelings) trees.
\item\label{p2.13:s2} There is an ultrametric space \((X, d)\) such that the diametrical graph \(G_{D, X}\) is complete \(k\)-partite with \(k \geqslant 3\) and \(T_X\) and \(T\) are isomorphic as free (unrooted, without labelings) trees.
\end{enumerate}
\end{proposition}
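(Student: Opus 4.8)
The plan is to prove the two implications separately, using the construction of representing trees and Theorem~\ref{t13} throughout.

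For the implication \eqref{p2.13:s1}\(\Rightarrow\)\eqref{p2.13:s2}, I start with a phylogenetic tree \(T_1\) whose underlying free tree is isomorphic to \(T\). The natural approach is to build an ultrametric space \((X,d)\) whose representing tree \(T_X\) has \(T\) as its underlying free tree and whose diametrical graph is complete \(k\)-partite with \(k\geqslant 3\); the last condition corresponds precisely, via the construction of \(T_X\), to the root of \(T_X\) having out-degree at least \(3\). I would take \(X\) to be the leaf set of \(T_1\) and define \(d\) by assigning to each internal node a height (a positive real number), decreasing strictly as one moves away from some chosen root, and setting \(d(x,y)\) equal to the height of the least common ancestor of the leaves \(x\) and \(y\); one checks this is an ultrametric (this is the standard equidistant-tree construction). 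Choosing the root of \(T_1\) to be an internal node — possible since every internal node has degree \(\geqslant 3\), so at least one exists unless \(T\) is a single edge or vertex, cases which must be discussed separately or excluded — makes the root of \(T_X\) have out-degree \(\geqslant 3\), giving \(k\geqslant 3\). Then I verify via Theorem~\ref{t1.7} (or directly) that the representing tree \(T_X\) coincides, as a free tree, with \(T_1\), hence with \(T\), up to isomorphism.

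For the converse \eqref{p2.13:s2}\(\Rightarrow\)\eqref{p2.13:s1}, I start from an ultrametric space \((X,d)\) with \(G_{D,X}\) complete \(k\)-partite, \(k\geqslant 3\), and \(T_X\) isomorphic to \(T\) as a free tree. I must produce a phylogenetic tree \(T_1\) with the same underlying free tree. The natural candidate is \(T_1 = T_X\) itself, equipped with the labeling on its leaves inherited from the construction (leaves carry label \(0\), but the phylogenetic labeling is really a bijective-type marking of leaves by taxa, which is unconstrained). The work is to check that every internal node of \(T_X\), viewed as a free tree, has degree at least three. By the construction of \(T_X\), an internal node \(v\) at level \(\geqslant 1\) is a subset of \(X\) with \(G_{D,v}\) complete \(m\)-partite, \(m\geqslant 2\), so \(v\) has \(m\geqslant 2\) direct successors in the rooted tree and one parent, giving free degree \(m+1\geqslant 3\); the root has out-degree \(k\geqslant 3\) by hypothesis and free degree \(k\geqslant 3\). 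So indeed every internal node of the free tree \(T_X\) has degree \(\geqslant 3\), and \(T_1 := T_X\) works.

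The main obstacle I anticipate is the bookkeeping in the first implication: verifying that the equidistant-tree ultrametric I write down has a representing tree whose \emph{free} structure matches \(T_1\) exactly. The subtlety is that the representing-tree construction contracts nothing and suppresses nothing, so I need the heights I assign to be \emph{strictly} decreasing along every root-to-leaf path (no two comparable internal nodes get equal height) to ensure no unwanted collapsing — this is exactly condition~\eqref{t1.7:c1} of Theorem~\ref{t1.7}, which then guarantees \(T_X \simeq T_1(r,l)\) as labeled rooted trees, and in particular as free trees. A secondary point requiring care is the degenerate case where \(T\) has no internal node (a single vertex or a single edge): such \(T\) is not the underlying tree of any phylogenetic tree, and correspondingly \(T_X\) for any admissible \((X,d)\) always has an internal node of degree \(\geqslant 3\), so both statements fail and the equivalence holds vacuously — this should be remarked on explicitly.
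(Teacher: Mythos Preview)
Your approach is essentially the one the paper has in mind: the paper gives no explicit proof, only the remark that the result follows ``using Theorem~\ref{t13} and the above described procedure of construction of representing trees,'' and your argument does exactly this, supplementing it with Theorem~\ref{t1.7} to reverse the construction in the direction \ref{p2.13:s1}\(\Rightarrow\)\ref{p2.13:s2}. Both implications are handled correctly; in particular your degree count for the free tree underlying \(T_X\) (root has degree \(k\geqslant 3\), every non-root internal node has \(\delta^{+}\geqslant 2\) hence free degree \(\geqslant 3\)) is the heart of \ref{p2.13:s2}\(\Rightarrow\)\ref{p2.13:s1}, and your use of Theorem~\ref{t1.7} with a strictly decreasing labeling rooted at an internal node of degree \(\geqslant 3\) cleanly yields \ref{p2.13:s1}\(\Rightarrow\)\ref{p2.13:s2}.

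One caveat on the degenerate cases: under the paper's stated definition of phylogenetic tree (inner nodes have degree \(\geqslant 3\), plus a leaf labeling), a single vertex and a single edge \emph{do} vacuously qualify, since they have no inner nodes. Your claim that ``such \(T\) is not the underlying tree of any phylogenetic tree'' is therefore not justified by the definition as written; rather, the equivalence as stated appears to fail in those two cases (statement~\ref{p2.13:s1} holds while~\ref{p2.13:s2} cannot, since \(|V(T_X)|=1\) forces \(|X|=1\) and \(|V(T_X)|=2\) never occurs). This is a defect of the proposition's formulation rather than of your proof strategy; it would be cleanest to note explicitly that the equivalence is intended for trees possessing at least one internal node, and that the boundary cases depend on the precise convention for phylogenetic trees (Semple--Steel typically take \(|X|\geqslant 3\), which would exclude them).
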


Let $(X,d)$ be an ultrametric space. Recall that a \emph{ball} with a radius $r \geqslant 0$ and a center $c\in X$ is the set 
\[
B_r(c)=\{x\in X\colon d(x,c)\leqslant r\}.
\]
The \emph{ballean} $\mathbf{B}_X$ of the ultrametric space $(X,d)$ is the set of all balls of $(X,d)$. Every one-point subset of $X$ belongs to $\mathbf{B}_X$,  this is a \emph{singular} ball in~$X$.

Let $T = T(r)$ be a rooted tree. For every vertex $v$ of $T$ we denote by $T_v$ the subtree of $T$ such that \(v\) is the root of \(T_v\),
\begin{equation}\label{e2.5}
V(T_v) = \{u \in V(T) \colon u = v \text{ or \(u\) is a successor of } v\},
\end{equation}
and satisfying
\begin{equation}\label{e2.6}
(\{v_1, v_2\} \in E(T_v)) \Leftrightarrow (\{v_1, v_2\} \in E(T(r)))
\end{equation}
for all \(v_1\), \(v_2 \in V(T_v)\). In this situation Charles Semple and Mike Steel say that \(T_v\) is a rooted subtree of \(T(r)\) lying below \(v\) \cite[p.~9]{SS2003OUP}. See Figure~\ref{fig1} for an example of such a rooted subtree. 

In what follows we write \(L = L(T_v)\) to denote the set of all leaves of \(T_v\).

If \(T = T_X\) is the representing tree of a finite ultrametric space \((X, d)\) and \(v = \{x_1, x_2, \ldots, x_n\}\) is a vertex of \(T_X\), \(x_i \in X\), \(i = 1, \ldots, n\), then we have
\[
L(T_v) = \{\{x_1\}, \ldots, \{x_n\}\}.
\]

\begin{figure}[h]
\begin{center}
\begin{tikzpicture}
\tikzstyle{level 1}=[level distance=15mm,sibling distance=25mm]
\tikzstyle{level 2}=[level distance=15mm,sibling distance=12mm]
\tikzstyle{level 3}=[level distance=15mm,sibling distance=6mm]
\tikzset{
solid node/.style={circle,draw,inner sep=1.5,fill=black},
hollow node/.style={circle,draw,inner sep=1.5}
}
\node [label=left:{\(T(v_1)\)}] at (3,0) {};
\node (0) [solid node, label=above:{\(v_1\)}] at (0,0) {}
child [sibling distance=15mm] {node(1) [solid node, label=above:{\(v_2\)}] {}}
child{node(1) [solid node, label=above left:{\(v_3\)}]{}
	child{node[solid node, label=above:{\(v_5\)}]{}
		child{node[solid node, label=below:{\(v_{11}\)}] {}}
		child{node[solid node, label=below:{\(v_{12}\)}] {}}
		}
	child{node[solid node, label=above left:{\(v_6\)}]{}
		child{node[solid node, label=below:{\(v_{13}\)}]{}}
		child{node[solid node, label=below:{\(v_{14}\)}]{}}
		}
	child [sibling distance=6mm] {node[solid node, label=below:{\(v_7\)}]{}}
	child [sibling distance=6mm] {node[solid node, label=below:{\(v_8\)}]{}}
}
child{node[solid node, label=above:{\(v_4\)}] {}
child{node[solid node, label=below:{\(v_9\)}]{}}
child{node[solid node, label=below:{\(v_{10}\)}]{}}
};

\tikzstyle{level 1}=[level distance=15mm,sibling distance=12mm]
\tikzstyle{level 2}=[level distance=15mm,sibling distance=6mm]
\tikzstyle{level 3}=[level distance=15mm,sibling distance=6mm]

\node [label=left:{\(T_{v_3}\)}] at (6,-1.5) {};
\node (1) [solid node, label=above:{\(v_3\)}] at (7,-1.5) {}
child {node[solid node, label=above:{\(v_5\)}]{}
	child{node[solid node, label=below:{\(v_{11}\)}] {}}
	child{node[solid node, label=below:{\(v_{12}\)}] {}}
	}
child {node[solid node, label=above left:{\(v_6\)}]{}
	child{node[solid node, label=below:{\(v_{13}\)}]{}}
	child{node[solid node, label=below:{\(v_{14}\)}]{}}
	}
child [sibling distance=6mm] {node[solid node, label=below:{\(v_7\)}]{}}
child [sibling distance=6mm] {node[solid node, label=below:{\(v_8\)}]{}};
\end{tikzpicture}
\end{center}
\caption{The rooted tree \(T_{v_3}\) is a rooted subtree of the rooted tree \(T(v_1)\) lying below \(v_3\). Here \(L(T_{v_3}) = \{v_7, v_8, v_{11}, v_{12}, v_{13}, v_{14}\}\).}
\label{fig1}
\end{figure}
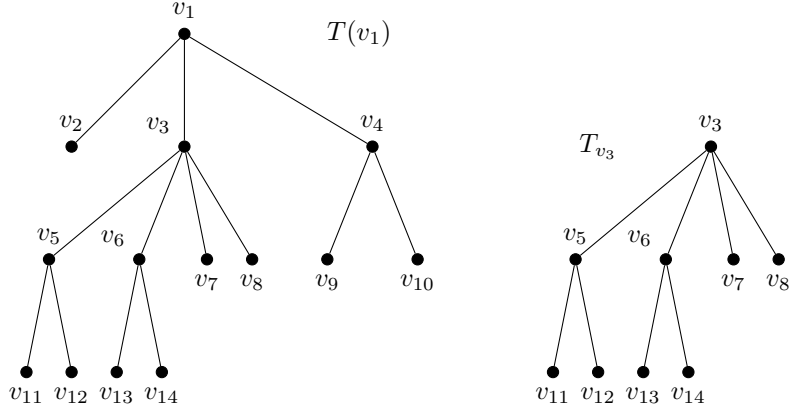

For $T_X$ consisting of one node only, \(V(T_X) = X\), we consider that \(X\) is the root of $T_X$ as well as a leaf of \(T_X\). Thus, if \(X = \{x\}\), then we have \(L(T_X) = \{\{x\}\}\).

The following proposition claims that the ballean of every finite ultrametric space $(X,d)$ coincides with the vertex set of $T_X$.

\begin{proposition}\label{p2.8}
Let $(X,d)$ be a finite, nonempty ultrametric space with representing tree $T_X$. Then the following statements hold:
\begin{enumerate}
\item For every node $v\in V(T_X)$ there is the unique \(B \in \mathbf{B}_X\), \(B = \{x_1, \ldots, x_n\}\), such that \(L(T_v) = \{\{x_1\}, \ldots, \{x_n\}\}\). This ball \(B\) is singular if and only if \(v \in L(T_X)\).
\item For every $B \in \mathbf{B}_X$, \(B = \{x_1, \ldots, x_n\}\), there is the unique $v \in V(T_X)$ such that $L(T_v) = \{\{x_1\}, \ldots, \{x_n\}\}$. This vertex \(v\) is a leaf of \(T\) if and only if \(B\) is singular.
\end{enumerate}
\end{proposition}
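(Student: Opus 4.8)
The case \(|X|=1\) is trivial: then \(T_X\) is the single vertex \(X=\{x\}\), which is simultaneously root and leaf, \(\mathbf{B}_X=\{X\}\) and \(L(T_X)=\{\{x\}\}\). So assume \(|X|\geqslant 2\). The plan is to identify each vertex of \(T_X\) with a subset of \(X\), to prove that the subsets so arising are exactly the balls of \((X,d)\), and to read off both statements from the formula \(L(T_v)=\{\{x\}\colon x\in v\}\). First I would record the facts that follow at once from the inductive construction of \(T_X\) (and Theorems~\ref{t13} and~\ref{t1.7}): every vertex \(v\) is a subset of \(X\), the root being \(X\); if \(\diam v>0\), then the direct successors of \(v\) are precisely the parts \(X_1,\dots,X_k\) (\(k\geqslant 2\)) of the complete multipartite diametrical graph \(G_{D,v}\) of \((v,d)\); and if \(\diam v=0\) --- equivalently \(|v|=1\), equivalently \(v\) is a leaf --- then \(v\) has no successors. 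Consequently the direct successors of a non-leaf \(v\) partition \(v\) into proper subsets; hence every successor of \(v\) is a proper subset of \(v\), distinct direct successors of a common vertex are disjoint subsets of \(X\), the map \(v\mapsto v\) of \(V(T_X)\) into \(2^{X}\) is injective, and the leaves of \(T_X\) are exactly the one-point subsets of \(X\).

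Next come the two key lemmas. \emph{Lemma A}: \(L(T_v)=\{\{x\}\colon x\in v\}\) for all \(v\in V(T_X)\); this is an easy induction on \(|V(T_v)|\), the step using that the direct successors of \(v\) partition \(v\) and that \(L(T_v)\) is the union of the leaf sets of the subtrees rooted at these successors. In view of Lemma~A, the ball \(B\) in (i) is forced to equal \(v\) (as a subset of \(X\)) and the vertex \(v\) in (ii) is forced to equal the subset \(B\); thus uniqueness in both parts is automatic once existence is known, and \(B=v\) is singular if and only if \(|v|=1\), i.e.\ if and only if \(v\) is a leaf, which gives the last assertion in each part. \emph{Lemma B}: \(v=B_{\diam v}(x)\) for every \(v\in V(T_X)\) and every \(x\in v\). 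I would prove this by induction down \(T_X\) from the root, where \(B_{\diam X}(x)=X\). If \(v\) is a direct successor of \(u\) and \(x\in v\), then \(v\) is the part of \(G_{D,u}\) containing \(x\), so \(\diam v<\diam u\) and \(v=\{z\in u\colon d(z,x)<\diam u\}\); any \(z\) with \(d(z,x)\leqslant\diam v\) lies in \(u=B_{\diam u}(x)\) by the inductive hypothesis and, since \(d(z,x)<\diam u\), lies in the part \(v\), while \(v\subseteq B_{\diam v}(x)\) is trivial. Replacing \(x\) by an arbitrary point of \(v\) is harmless because in an ultrametric space every point of a ball is a center. In particular every vertex of \(T_X\) is a ball, which is the existence assertion of~(i).

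For (ii) it remains to see that every ball of \((X,d)\) is a vertex of \(T_X\); this is the step I expect to be the main obstacle. Given \(B_r(c)\), consider the path \(X=u_0\supsetneq u_1\supsetneq\dots\supsetneq u_p=\{c\}\) in \(T_X\) from the root to the leaf \(\{c\}\), with \(\diam u_0>\diam u_1>\dots>\diam u_p=0\). If \(r\geqslant\diam X\), then \(B_r(c)=X=u_0\). Otherwise let \(i\geqslant 1\) be least with \(\diam u_i\leqslant r\), so \(\diam u_{i-1}>r\geqslant\diam u_i\). Then \(u_i\subseteq B_r(c)\) because \(d(z,c)\leqslant\diam u_i\leqslant r\) for \(z\in u_i\); and \(B_r(c)\subseteq u_i\) because any \(y\) with \(d(y,c)\leqslant r<\diam u_{i-1}\) lies in \(B_{\diam u_{i-1}}(c)=u_{i-1}\) by Lemma~B and, having \(d(y,c)<\diam u_{i-1}\), lies in the part of \(G_{D,u_{i-1}}\) containing \(c\), namely \(u_i\). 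Hence \(B_r(c)=u_i\in V(T_X)\). Combining this with Lemmas~A and~B and the injectivity of \(v\mapsto v\) yields both statements. The delicate point, pervading Lemma~B and this last step, is matching the two descriptions of a vertex --- as the closed ball \(B_{\diam v}(x)\) and, relative to its parent \(u\), as the ``open ball'' \(\{z\in u\colon d(z,x)<\diam u\}\) arising from the part structure --- which forces careful bookkeeping of the radii and of strict versus non-strict inequalities.
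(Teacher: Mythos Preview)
The paper does not give its own proof of this proposition: it simply states that the result is ``a simple modification of the corresponding result from~\cite{Pet2013PoI} (see also Theorem~2.5 in \cite{Dov2019pNUAA}).'' So there is no proof in the paper to compare yours against, and your self-contained argument is a genuine addition.

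Your proof is correct. The crucial identifications --- that each vertex of \(T_X\) is literally a subset of \(X\), that the children of a non-leaf \(v\) are the parts of the diametrical graph \(G_{D,v}\), and that these parts partition \(v\) into proper subsets of strictly smaller diameter --- are exactly what the inductive construction of \(T_X\) provides, and from them Lemmas~A and~B follow as you say. The strict inequality \(\diam v<\diam u\) for a direct successor \(v\) of \(u\), which you use in Lemma~B and in the final step, does hold: two points in the same part of \(G_{D,u}\) are never at distance \(\diam u\), and the case \(|v|=1\) is harmless since then \(\diam v=0<\diam u\). Your argument for~(ii), locating an arbitrary ball \(B_r(c)\) along the root-to-leaf path through \(c\) by comparing \(r\) with the decreasing sequence of diameters, is the standard and correct way to finish. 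The only place I would expand slightly is the injectivity claim for \(v\mapsto v\): you have the ingredients (proper containment along ancestry, disjointness for incomparable vertices via their nearest common ancestor), but the one-line deduction that distinct vertices give distinct subsets could be spelled out for the reader.
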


This proposition is a simple modification of the corresponding result from~\cite{Pet2013PoI} (see also Theorem~2.5 in \cite{Dov2019pNUAA}).

The proofs of Theorem~\ref{t2.7}, Theorem~\ref{t1.7} and Proposition~\ref{p2.8} are based on the next basic theorem.

\begin{theorem}[\cite{DP2018pNUAA}]\label{t2.9}
Let \((X, d)\) be a finite ultrametric space and let \(x_1\) and \(x_2\) be two distinct points of \(X\). If \(P\) is the path joining the distinct leaves \(\{x_1\}\) and \(\{x_2\}\) in \(T_X\), then we have
\[
d(x_1, x_2) = \max_{v \in V(P)} l(v),
\]
where the labeling \(l \colon V(T_X) \to \mathbb{R}^{+}\) is defined as in \eqref{e2.7}.
\end{theorem}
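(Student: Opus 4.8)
The plan is to argue by induction on the cardinality $|X|$, exploiting the recursive nature of the construction of $T_X$ via diametrical graphs. For the base case $|X| = 2$, write $X = \{x_1, x_2\}$. Since $d(x_1, x_2) = \diam X$, the diametrical graph is the complete bipartite graph $G_{D,X}[\{x_1\}, \{x_2\}]$, so $T_X$ consists of the root $X$ with label $\diam X = d(x_1, x_2)$ together with the two leaves $\{x_1\}$, $\{x_2\}$, each labeled $0$. The path $P$ joining $\{x_1\}$ and $\{x_2\}$ has vertex set $V(P) = \{\{x_1\}, X, \{x_2\}\}$, whence $\max_{v \in V(P)} l(v) = \diam X = d(x_1, x_2)$.

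For the inductive step, assume the statement holds for all nonempty finite ultrametric spaces of cardinality less than $|X|$, where $|X| \geq 3$. By Theorem~\ref{t13} we have $G_{D, X} = G_{D, X}[X_1, \ldots, X_k]$ with $k \geq 2$; the root $X$ of $T_X$ carries the label $\diam X$ and $X_1, \ldots, X_k$ are the first-level nodes with $l(X_i) = \diam X_i$. I would first record the elementary fact that $l(v) \leq \diam X$ for every $v \in V(T_X)$: indeed, each such $v$ is by construction a subset of $X$, hence has diameter at most $\diam X$, and its label equals its diameter. Now distinguish two cases. If $x_1 \in X_i$ and $x_2 \in X_j$ with $i \neq j$, then $\{x_1\}$ lies in the rooted subtree $T_{X_i}$ and $\{x_2\}$ in $T_{X_j}$; since these subtrees meet only at the root $X$, the path $P$ passes through $X$, so $\max_{v \in V(P)} l(v) = l(X) = \diam X$, while the definition of $G_{D,X}$ and Definition~\ref{def3.1} give $d(x_1, x_2) = \diam X$ because distinct parts are joined by an edge. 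This settles the first case.

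In the remaining case $x_1, x_2 \in X_i$ for some $i$, so $|X_i| \geq 2$; since $x_1$, $x_2$ are not adjacent in $G_{D,X}$, together with $d(x_1,x_2) \leq \diam X$ this yields $d(x_1, x_2) < \diam X$, so the maximum along $P$ is not attained at $X$. By construction the rooted subtree $T_{X_i}$ of $T_X$ lying below $X_i$ is precisely the representing tree of the finite, nonempty ultrametric space $(X_i, d)$ (restriction of $d$ to $X_i$), and the labeling inherited from $T_X$ agrees on $V(T_{X_i})$ with the one produced by the construction applied to $X_i$. Since $k \geq 2$ forces $|X_i| < |X|$, the induction hypothesis applies: if $P'$ is the path joining $\{x_1\}$ and $\{x_2\}$ inside $T_{X_i}$, then $d(x_1, x_2) = \max_{v \in V(P')} l(v)$. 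Finally, because $T_{X_i}$ is a connected subgraph of the tree $T_X$ containing both leaves $\{x_1\}$, $\{x_2\}$, the uniqueness of paths in a tree gives $P = P'$, which completes the induction.

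The step I expect to require the most care is the identification, in the inductive step, of the rooted subtree $T_{X_i}$ with the representing tree of $(X_i, d)$ together with the matching of the two labelings; this is essentially built into the recursive definition of $T_X$, but it should be spelled out carefully, for instance by an auxiliary induction on the number of levels or by invoking the described construction verbatim. Everything else reduces to the definition of the diametrical graph, the defining property of complete multipartite graphs, and the uniqueness of paths in a tree.
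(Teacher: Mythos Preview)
Your inductive argument is correct and is exactly the natural proof given the recursive construction of $T_X$: split according to whether $x_1$, $x_2$ lie in distinct parts of $G_{D,X}$ (so the path passes through the root and $d(x_1,x_2)=\diam X$) or in the same part $X_i$ (so the path lies entirely in $T_{X_i}$ and the induction hypothesis applies). One small stylistic point: the remark ``so the maximum along $P$ is not attained at $X$'' in Case~2 is premature, since at that stage you have not yet shown that the maximum equals $d(x_1,x_2)$; it is harmless because you never actually use it, relying instead (correctly) on $P=P'$ and the induction hypothesis.

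As for comparison with the paper: Theorem~\ref{t2.9} is quoted from \cite{DP2018pNUAA} and is not proved here, so there is no paper-internal proof to compare against. Your argument is self-contained and would serve perfectly well as a proof; the only point that deserves an extra sentence, as you yourself note, is the identification of the rooted subtree below $X_i$ with the representing tree $T_{X_i}$ of the restricted space $(X_i,d|_{X_i\times X_i})$ together with the matching of labelings, which follows immediately from the recursive definition of $T_X$.
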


The next section of the paper contains several results which can be considered as extensions of Theorem~\ref{t2.9} to the case of unrooted, labeled trees.

\section*{Comments on the concept of graph isomorphism}

All above introduced notions of tree isomorphism can be considered as some specifications of the following general definition.

Let \(G = (V, E, L_V, L_E, l)\) be a graph with \(V(G) = V\), \(E(G) = E\) and such that \(L_V\) and \(L_E\) are some sets of vertex labels and edge labels, respectively, and let \(f \colon (V \cup E) \to L_V \cup L_E\) be a mapping for which \(f(V) \subseteq L_V\) and \(f(E) \subseteq L_{E}\) hold.

\begin{definition}\label{d2.11}
The graphs 
\[
G_1 = (V_1, E_1, L_{V_1}, L_{E_1}, l_1) \quad \text{and} \quad G_2 = (V_2, E_2, L_{V_2}, L_{E_2}, l_2)
\]
are isomorphic if there is a bijection \(f \colon V_1 \to V_2\) such that:
\begin{enumerate}
\item\label{d2.11:s1} \(l_1(v) = l_2(f(v))\) for every \(v \in V_1\).
\item\label{d2.11:s2} \((\{u, v\} \in E_1) \Leftrightarrow (\{f(u), f(v)\} \in E_2)\) for all \(u\), \(v \in V_1\).
\item\label{d2.11:s3} \(l_1(\{u, v\}) = l_2(\{f(u), f(v)\})\) for every \(\{u, v\} \in E_1\)
\end{enumerate}
(See Definition~1 in \cite{HHH2006}).
\end{definition}

Two definitions of graph isomorphism are equivalent if any two graphs are isomorphic w.r.t the first definition if and only if these graphs are isomorphic w.r.t. the second one. For the case when 
\[
G_1 = (V_1, E_1, L_{V_1}, L_{E_1}, l_1) \quad \text{and} \quad G_2 = (V_2, E_2, L_{V_2}, L_{E_2}, l_2)
\]
are trees, Definition~\ref{d2.11} is equivalent to:
\begin{itemize}
\item Definition~\ref{d1.5} if \(L_{V_1} \cup L_{E_1} = L_{V_2} \cup L_{E_2} = \{0\}\).
\item The definition of isomorphic rooted trees if \(L_{V_1} \cup L_{E_1} = L_{V_2} \cup L_{E_2} = \{0, 1\}\) and there are the unique \(v^{*}_1 \in V_1\) and \(v^{*}_2 \in V_2\) such that \(l_i(v^{*}_i) = 1\) and \(l_i(e) = l_i(v) = 0\) whenever \(e \in E_i\) and \(v \in V_i \setminus \{v^{*}_i\}\), \(i = 1\), \(2\).
\item Definition~\ref{d1.6} if \(L_{V_1} = L_{V_2} = \mathbb{R}^{+}\) and \(L_{E_1} = L_{E_2} = \{0\}\).
\end{itemize}

Similarly we can find specifications of Definition~\ref{d2.11} which are equivalent to definitions of isomorphisms of phylogenetic trees, weighted trees, weighted rooted trees, and weighted labeled trees, but we prefer to use several independent definitions of isomorphism to simplify the statements of future results.

The concept of equivalent isomorphisms (= equivalent definitions of isomorphism) can be more satisfactory described on the basis of category theory, but this is not the subject of the paper.

\section{Weights and labelings on unrooted trees}\label{sec3}

Let \(G(w)\) be a weighted graph. The weight \(w \colon E(G) \to \mathbb{R}^{+}\) is \emph{strictly positive} if \(w(e) > 0\) holds for every \(e \in E(G)\).

Let $T = T(w)$ be a weighted tree with the strictly positive weight $w$. The function $d_{w} \colon V(T) \times V(T) \to \mathbb{R}^{+}$ defined as
\begin{equation}\label{e6.7}
d_{w}(u, v) = \begin{cases}
0, & \text{if $u = v$}\\
\sum_{e \in E(P)} w(e), & \text{if $u \neq v$},
\end{cases}
\end{equation}
where $P$ is the unique path joining $u$ and $v$ in $T$, is a metric on $V(T)$. 

If \(G = G(w)\) is a connected, weighted graph with the strictly positive weight \(w \colon E(G) \to \mathbb{R}^{+}\), then the weighted \emph{shortest-path metric} is the mapping \(\rho_w \colon V(G) \times V(G) \to \mathbb{R}^{+}\) defined as
\begin{equation}\label{e3.2}
\rho_w(u, v) = \begin{cases}
0, & \text{if } u = v\\
\min_{P \in \mathcal{P}_{u, v}} \sum_{e \in P} w(e), & \text{if } u \neq v,
\end{cases}
\end{equation}
where \(\mathcal{P}_{u, v}\) is the set of all paths joining \(u\) and \(v\) in \(G\). If \(G\) is a tree, then, for any pair of distinct \(u\), \(v \in G\), the set \(\mathcal{P}_{u, v}\) contains the unique path. Thus, the metrics \(d_w\) and \(\rho_w\) coincide for trees.

The metric defined by~\eqref{e3.2} is called \emph{additive}. If \(w(e) = 1\) holds for every \(e \in E(G)\), then \(d_{w}\) is called the \emph{graph metric} on \(G\).

It is easy to prove that the connected graphs \(G_1\) and \(G_2\) are isomorphic as free, unweighted graphs if and only if the metric spaces \((V(G_1), d_1)\) and \((V(G_2), d_2)\) are isometric, where \(d_i\) is graph metric on \(G_i\), \(i = 1\), \(2\). The following theorem is a partial generalization of this fact. 

\begin{theorem}\label{t3.4}
Let \(T_1 = T_1(w_1)\) and \(T_2 = T_2(w_2)\) be weighted trees with strictly positive weights. Then a mapping \(f \colon V(T_1) \to V(T_2)\) is an isometry of the metric spaces \((V(T_1), d_{w_1})\) and \((V(T_2), d_{w_2})\) if and only if it is an isomorphism of the weighted trees \(T_1(w_1)\) and \(T_2(w_2)\).
\end{theorem}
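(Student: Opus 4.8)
The plan is to prove the two implications separately, with the easy direction being that an isomorphism of weighted trees is an isometry, and the substantive direction being the converse.

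First I would handle the implication that if $f$ is an isomorphism of the weighted trees $T_1(w_1)$ and $T_2(w_2)$, then $f$ is an isometry of $(V(T_1), d_{w_1})$ and $(V(T_2), d_{w_2})$. Here the key observation is that an isomorphism of (unweighted) trees maps the unique path $P$ joining $u$ and $v$ in $T_1$ bijectively onto the unique path joining $f(u)$ and $f(v)$ in $T_2$, carrying $E(P)$ onto the edge set of that path. Since $f$ preserves edge weights by \eqref{d1.6e2}, the sum $\sum_{e \in E(P)} w_1(e)$ equals the corresponding sum in $T_2$, so $d_{w_1}(u,v) = d_{w_2}(f(u), f(v))$ by the definition \eqref{e6.7}. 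This is a short routine argument.

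For the converse, suppose $f$ is an isometry of the two metric spaces. I would first argue that $f$ is an isomorphism of the underlying unweighted trees, and then that it preserves edge weights. To recover adjacency metrically, the plan is to characterize the edges of a weighted tree with strictly positive weights as exactly those pairs $\{u,v\}$ for which $d_w(u,v)$ is not representable as $d_w(u,z) + d_w(z,v)$ for any third vertex $z$; equivalently, $\{u,v\} \in E(T)$ iff there is no $z \in V(T) \setminus \{u,v\}$ lying on the $u$–$v$ path. Since strict positivity guarantees that a vertex $z$ on the path strictly between $u$ and $v$ satisfies $d_w(u,v) = d_w(u,z) + d_w(z,v)$ with both summands positive, and since no such intermediate vertex exists precisely when $\{u,v\}$ is an edge, this gives a purely metric description of $E(T_i)$. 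An isometry preserves this description, so $f$ satisfies \eqref{d1.5:e1}; being a bijection, it is a tree isomorphism. Then for any edge $\{u,v\} \in E(T_1)$, its image $\{f(u), f(v)\}$ is an edge of $T_2$, and $w_1(\{u,v\}) = d_{w_1}(u,v) = d_{w_2}(f(u),f(v)) = w_2(\{f(u),f(v)\})$, which is \eqref{d1.6e2}.

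The main obstacle is making the metric characterization of edges fully rigorous: I need to verify carefully that in a tree, for distinct non-adjacent $u, v$, the path between them genuinely contains an interior vertex $z$ (true since a path of length $\geq 2$ has an interior vertex), and that additivity $d_w(u,v) = d_w(u,z) + d_w(z,v)$ holds exactly when $z$ lies on the $u$–$v$ path — the "only if" part uses strict positivity and the uniqueness of paths in a tree (if $z$ is off the path, the walk $u \to z \to v$ strictly exceeds the path length). I would isolate this as the one place requiring genuine care; the rest is bookkeeping with \eqref{e6.7} and Definition~\ref{d1.6}.
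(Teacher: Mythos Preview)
Your proposal is correct. Both directions are handled soundly; in particular, your metric characterization of edges is valid: in a tree with strictly positive weights one has, for any three distinct vertices, $d_w(u,z)+d_w(z,v)=d_w(u,v)+2\,d_w(m,z)$ where $m$ is the median of $u,v,z$, so equality holds precisely when $z$ lies on the $u$--$v$ path. Hence $\{u,v\}$ is an edge iff no third vertex sits metrically between $u$ and $v$, and an isometry preserves this.

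The paper's proof of the converse differs in structure. Rather than characterizing edges metrically, it shows only the forward implication $(\{u_1,v_1\}\in E(T_1))\Rightarrow(\{f(u_1),f(v_1)\}\in E(T_2))$ by contradiction: assuming the image is not an edge, it pulls the $T_2$-path back via $f^{-1}$, concatenates the resulting $T_1$-paths into a connected subgraph, and derives a contradiction from strict positivity and path uniqueness. The reverse implication is then obtained from the cardinality identity $|E(T_i)|=|V(T_i)|-1$. Your approach is more symmetric and conceptually cleaner (you get the full equivalence \eqref{d1.5:e1} in one stroke), while the paper's avoids having to justify the ``only if'' half of betweenness but trades this for the edge-count argument. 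Both ultimately rest on the same ingredients: uniqueness of paths in trees and strict positivity of the weights.
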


\begin{proof}
Let \(u\), \(v \in V(T_1)\). If \(f\) is an isomorphism of \(T_1(w_1)\) and \(T_2(w_2)\), then, using \eqref{d1.6e2}, \eqref{e6.7} and the uniqueness of the path \(P_{u, v}\) joining \(u\) and \(v\) in \(T_1\), we obtain the equality
\[
d_{w_1} (u, v) = d_{w_2}(f(u), f(v)).
\]
Hence, \(f\) is an isometry of the metric spaces \((V(T_1), d_{w_1})\) and \((V(T_2), d_{w_2})\).

Conversely, let \(f\) be an isometry of \((V(T_1), d_{w_1})\) and \((V(T_2), d_{w_2})\). Then it is easy to see that \(f\) is an isomorphism of weighted trees \(T_1(w_1)\) and \(T_2(w_2)\) if and only if \(f\) is an isomorphism of free (unweighted) trees \(T_1\) and \(T_2\). To prove that \(f\) is an isomorphism of \(T_1\) and \(T_2\) it suffices to show that the implication
\begin{equation}\label{t3.4:e1}
(\{u_1, v_1\} \in E(T_1)) \Rightarrow (\{f(u_1), f(v_1)\} \in E(T_2))
\end{equation}
is valid for all \(u_1\), \(v_1 \in V(T_1)\). Indeed, \eqref{t3.4:e1} implies 
\[
E(T_2) \supseteq \{\{f(u_1), f(v_1)\} \colon \{u_1, v_1\} \in E(T_1)\}. 
\]
Since \(f\) is bijective, we have \(|V(T_1)| = |V(T_2)|\). Moreover, since \(T_1\) and \(T_2\) are trees, the equalities \(|V(T_1)| = |E(T_1)| + 1\) and \(|V(T_2)| = |E(T_2)| + 1\) (see, for example, \cite[Corollary~1.53]{Die2005}) hold. Consequently, we have
\begin{equation}\label{t3.4:e2}
|E(T_1)| = |E(T_2)|.
\end{equation}
Now from \eqref{t3.4:e1} and \eqref{t3.4:e2} it follows that
\[
(\{u_1, v_1\} \in E(T_1)) \Leftrightarrow (\{f(u_1), f(v_1)\} \in E(T_2)).
\]

Let us prove \eqref{t3.4:e1}. Suppose contrary that \(\{f(u_1), f(v_1)\} \notin E(T_2)\). Then there exists a path \(P^{2} = (v_1^{*}, v_2^{*}, \ldots, v_n^{*})\) such that \(P^{2} \subseteq T_2\), \(n \geqslant 3\), and \(v_1^{*} = f(u_1)\), \(v_n^{*} = f(v_1)\). By~\eqref{e6.7}, we have 
\begin{equation}\label{t3.4:e3}
d_{w_2}(v_1^{*}, v_n^{*}) = \sum_{i=1}^{n-1} w_2(\{v_i^*, v_{i+1}^{*}\}) = \sum_{i=1}^{n-1} d_{w_2} (v_i^{*}, v_{i+1}^{*}).
\end{equation}
Since \(f\) is an isometry of \((V(T_1), d_{w_1})\) and \((V(T_2), d_{w_2})\), from \eqref{t3.4:e3} it follows that
\begin{equation}\label{t3.4:e4}
\begin{aligned}
d_{w_1}(u_1, v_1) &= d_{w_1}(u_1, f^{-1}(v_2^*)) + d_{w_1}(f^{-1}(v_{n-1}^*), v_1) \\
& \quad + \sum_{i=2}^{n-2} d_{w_1}(f^{-1}(v_{i}^*), f^{-1}(v_{i+1}^*)).
\end{aligned}
\end{equation}
(For \(i = 2\), \(\ldots\), \(n-2\) we identify the one-point set \(f^{-1}(v_{i}^*)\) with the unique point of this set.)

Since \(d_{w_1}\) is an additive metric, there are 
\begin{enumerate}
\item [] a path \(P_{1}^{1} \subseteq T_1\) joining \(f^{-1}(v_1^*) = u_1\) and \(f^{-1}(v_2^*)\),
\item [] a path \(P_{2}^{1} \subseteq T_1\) joining \(f^{-1}(v_2^*)\) and \(f^{-1}(v_3^*)\),
\item [] \ldots,
\item [] and a path \(P_{n-1}^{1} \subseteq T_1\) joining \(f^{-1}(v_{n-1}^*)\) and \(f^{-1}(v_{n}^*) = v_1\).
\end{enumerate}
Using~\eqref{e6.7} again we can write \eqref{t3.4:e4} as
\begin{equation}\label{t3.4:e5}
w_1(\{u_1, v_1\}) = \sum_{i=1}^{n-1} \sum_{e \in E(P_i^{1})} w(e).
\end{equation}

Let us consider a graph \(G\) such that 
\[
V(G) = \bigcup_{i=1}^{n-1} V(P_i^{1}) \quad \text{and} \quad E(G) = \bigcup_{i=1}^{n-1} E(P_i^{1}).
\]
It is clear that \(G\) is connected, \(G \subseteq T_1\), and \(u_1\), \(v_1 \in V(G)\). Since \(T_1\) is a tree, \(G\) is a subtree of \(T_1\). Consequently, there is a unique path \(P_{u, v}\) joining \(u_1\) and \(v_1\) in \(G\). From \(G \subseteq T_1\) it follows that \(P_{u, v}\) is a path joining \(u_1\) and \(v_1\) in \(T_1\). But the unique path joining \(u_1\) and \(v_1\) in \(T_1\) is \((\{u_1, v_1\}\). Thus, \(\{u_1, v_1\} \in E(G)\). Since \(w_1(e) > 0\) holds for every \(e \in E(T_1)\), equality \eqref{t3.4:e5} implies \(E(G) = \{\{u_1, v_1\}\}\). The last equality contradicts the definition of the path \(P^{2}\).

The validity of \eqref{t3.4:e1} follows.
\end{proof}

\begin{corollary}\label{c3.1}
Let \((X, d)\) be a finite metric space. Suppose that \(T_1 = T_1(w_1)\) and \(T_2 = T_2(w_2)\) are weighted trees with strictly positive weights such that 
\[
X = V(T_1) = V(T_2).
\]
Then the equalities \(d = d_{w_1} = d_{w_2}\) imply \(T_1(w_1) = T_2(w_2)\).
\end{corollary}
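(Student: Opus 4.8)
The plan is to derive this immediately from Theorem~\ref{t3.4} by taking the identity map as the candidate isometry. First I would observe that since $V(T_1) = V(T_2) = X$, the identity mapping $\mathrm{id}_X \colon V(T_1) \to V(T_2)$ is a well-defined bijection, and that the hypothesis $d = d_{w_1} = d_{w_2}$ says precisely that
\[
d_{w_1}(u, v) = d_{w_2}(\mathrm{id}_X(u), \mathrm{id}_X(v))
\]
for all $u$, $v \in X$; hence $\mathrm{id}_X$ is an isometry of $(V(T_1), d_{w_1})$ and $(V(T_2), d_{w_2})$ in the sense of Definition~\ref{d1.1}.

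Next I would invoke Theorem~\ref{t3.4}: since both $w_1$ and $w_2$ are strictly positive, the isometry $\mathrm{id}_X$ is an isomorphism of the weighted trees $T_1(w_1)$ and $T_2(w_2)$. Unwinding Definition~\ref{d1.6} for $f = \mathrm{id}_X$, condition \eqref{d1.5:e1} becomes
\[
(\{u, v\} \in E(T_1)) \Leftrightarrow (\{u, v\} \in E(T_2)),
\]
which gives $E(T_1) = E(T_2)$, while condition \eqref{d1.6e2} becomes $w_2(\{u, v\}) = w_1(\{u, v\})$ for every $\{u, v\} \in E(T_1)$, i.e.\ $w_1 = w_2$. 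Together with $V(T_1) = V(T_2)$ this yields the literal equality $T_1(w_1) = T_2(w_2)$, as required.

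I do not expect a genuine obstacle here; the only points deserving a word of care are that "isometry" in Definition~\ref{d1.1} already bundles in bijectivity (automatic for the identity on a fixed set), and that the conclusion is honest equality of weighted trees rather than mere isomorphism — but the identity isomorphism delivers exactly that. One could instead give a self-contained proof mirroring the argument of Theorem~\ref{t3.4}, but routing through that theorem is shorter and cleaner.
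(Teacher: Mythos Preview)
Your proposal is correct and matches the paper's own proof exactly: the paper also derives the corollary from Theorem~\ref{t3.4} by taking \(f\) to be the identity map on \(X\). Your write-up simply spells out in more detail what the paper states in one line.
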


\begin{proof}
It follows from Theorem~\ref{t3.4} with \(f\) satisfying \(f(x) = x\) for every \(x \in X\).
\end{proof}

\begin{corollary}\label{c3.2}
Let \(T_1 = T_1(w_1)\) and \(T_2 = T_2(w_2)\) be weighted trees with strictly positive weights. Then the equivalence
\begin{equation}\label{c3.2:e1}
(T_1(w_1) \simeq T_2(w_2)) \Leftrightarrow ((V(T_1), d_{w_1}) \simeq (V(T_2), d_{w_2}))
\end{equation}
is valid.
\end{corollary}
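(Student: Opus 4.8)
The plan is to derive the corollary directly from Theorem~\ref{t3.4} by merely unwinding the existential quantifiers hidden in the two notions ``\(\simeq\)''. Recall that \(T_1(w_1) \simeq T_2(w_2)\) means, by Definition~\ref{d1.6}, that there exists a mapping \(f \colon V(T_1) \to V(T_2)\) which is an isomorphism of the weighted trees, while \((V(T_1), d_{w_1}) \simeq (V(T_2), d_{w_2})\) means, by Definition~\ref{d1.1}, that there exists a bijection \(g \colon V(T_1) \to V(T_2)\) which is an isometry of the corresponding metric spaces. Theorem~\ref{t3.4} asserts, for \emph{every} fixed map \(V(T_1) \to V(T_2)\), that the property ``is an isometry of \((V(T_1),d_{w_1})\) and \((V(T_2),d_{w_2})\)'' and the property ``is an isomorphism of \(T_1(w_1)\) and \(T_2(w_2)\)'' are one and the same; in particular the class of isometries and the class of weighted-tree isomorphisms between the two spaces coincide as sets of maps.

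Concretely, I would argue as follows. First I would prove the forward implication: assume \(T_1(w_1) \simeq T_2(w_2)\), pick an isomorphism \(f\) of the weighted trees, note that \(f\) is in particular a bijection \(V(T_1) \to V(T_2)\), and apply the ``if'' direction of Theorem~\ref{t3.4} to conclude that \(f\) is an isometry of \((V(T_1), d_{w_1})\) and \((V(T_2), d_{w_2})\); hence these metric spaces are isometric. Then I would prove the converse: assume \((V(T_1), d_{w_1}) \simeq (V(T_2), d_{w_2})\), pick an isometry \(g\), and apply the ``only if'' direction of Theorem~\ref{t3.4} to conclude that \(g\) is an isomorphism of the weighted trees \(T_1(w_1)\) and \(T_2(w_2)\); hence \(T_1(w_1) \simeq T_2(w_2)\). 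Combining the two implications gives the equivalence~\eqref{c3.2:e1}.

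There is essentially no obstacle here — the entire content has already been delivered by Theorem~\ref{t3.4}, and the corollary is just its ``global'' (existentially quantified) restatement, in the same way that Corollary~\ref{c3.1} is its restatement for the identity map. The only point worth a line of care is to observe that an isomorphism of weighted trees is by Definition~\ref{d1.6} also an isomorphism of the underlying (unlabeled) trees and hence a bijection of vertex sets, so that Theorem~\ref{t3.4} is applicable to it; this is immediate. Accordingly I expect the written proof to be two or three sentences long.
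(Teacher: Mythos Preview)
Your proposal is correct and matches the paper's approach exactly: the paper states Corollary~\ref{c3.2} without proof immediately after Theorem~\ref{t3.4} and Corollary~\ref{c3.1}, treating it as the obvious existential restatement of Theorem~\ref{t3.4} that you describe. Your two-implication unpacking is precisely what the reader is expected to supply.
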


The concept of isomorphism of weighted trees can be naturally extended to the concept of isomorphism of weighted graphs. Let \(G_1 = G_1(w_1)\) and \(G_2 = G_2(w_2)\) be connected, nonempty graphs with the weights \(w_1 \colon E(G_1) \to \mathbb{R}^{+}\) and \(w_2 \colon E(G_2) \to \mathbb{R}^{+}\). We write \(G_1(w_1) \simeq G_2(w_2)\) and say that \(G_1(w_1)\) and \(G_2(w_2)\) are isomorphic if there is a bijection \(f \colon V(G_1) \to V(G_2)\) such that
\[
(\{u, v\} \in E(G_1)) \Leftrightarrow (\{f(u), f(v)\} \in E(G_2))
\]
for all \(u\), \(v \in V(G_1)\) and, moreover, 
\[
w_1(\{u, v\}) = w_2(\{f(u), f(v)\})
\]
holds whenever \(\{u, v\} \in E(G_1)\).

The following result shows that the validity of \eqref{c3.2:e1} is a characteristic property of trees.

\begin{proposition}\label{p3.4}
Let \(G_1\) and \(G_2\) be connected, nonempty graphs. Then the following statements are equivalent:
\begin{enumerate}
\item \label{p3.4:s1} \(G_1\) and \(G_2\) are trees.
\item \label{p3.4:s2} The equivalence
\[
(G_1(w_1) \simeq G_2(w_2)) \Leftrightarrow ((V(G_1), \rho_{w_1}) \simeq (V(G_2), \rho_{w_2}))
\]
is valid for all strictly positive \(w_1 \colon E(G_1) \to \mathbb{R}^{+}\) and \(w_2 \colon E(G_2) \to \mathbb{R}^{+}\).
\end{enumerate}
\end{proposition}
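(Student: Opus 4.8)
The plan is to prove the two implications separately, with essentially all the work in (ii)$\Rightarrow$(i). The implication (i)$\Rightarrow$(ii) is immediate: when $G_1$ and $G_2$ are trees, any two vertices are joined by a unique path, so $\rho_{w_i}$ coincides with the additive metric $d_{w_i}$ of \eqref{e6.7}, and, since for trees an isomorphism of weighted graphs is exactly an isomorphism of weighted trees, the asserted equivalence is precisely Corollary~\ref{c3.2}. So the real content is the converse.

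I would prove (ii)$\Rightarrow$(i) by contraposition: assuming that one of the graphs, say $G_1$, is not a tree, I produce strictly positive weights for which the equivalence in (ii) fails. It is convenient first to record the trivial half of that equivalence, valid for all graphs and all weights: if $f$ realizes $G_1(w_1)\simeq G_2(w_2)$, then $f$ carries each path onto a path of equal weight, so $f$ is an isometry of $(V(G_1),\rho_{w_1})$ onto $(V(G_2),\rho_{w_2})$. Hence (ii) can break down only by having the metric spaces isometric while the weighted graphs are non-isomorphic, and that is what I will engineer with a ``freezing'' weight. Fix a spanning tree $T_1\subseteq G_1$ and an edge $e_0\in E(G_1)\setminus E(T_1)$, which exists because $G_1$ is not a tree; put $n=|V(G_1)|$, let $w_1$ be $1$ on every edge of $T_1$ and $n$ on every edge outside $T_1$, and let $w_1^{\ast}$ agree with $w_1$ except that $w_1^{\ast}(e_0)=n+1$. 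Any $T_1$-path has at most $n-1$ edges, hence weight below $n$ under either weight, while any path using a non-tree edge has weight at least $n$ under either weight; so for both $w_1$ and $w_1^{\ast}$ the $T_1$-path joining two vertices is the unique shortest path, whence $\rho_{w_1}=\rho_{w_1^{\ast}}$ (this common metric being the path metric of $T_1$ with all edge weights $1$). On the other hand $G_1(w_1)\not\simeq G_1(w_1^{\ast})$, because an isomorphism of weighted graphs preserves the multiset of edge weights and $w_1$ attains $n$ but never $n+1$, whereas $w_1^{\ast}$ attains $n+1$.

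It remains to carry this over to the prescribed second graph $G_2$. If $G_1$ is not isomorphic to $G_2$ as an unweighted graph, then $G_1(w_1)\simeq G_2(w_2)$ is impossible for every choice of weights, so any strictly positive $w_2$ on $G_2$ with $(V(G_2),\rho_{w_2})\simeq(V(G_1),\rho_{w_1})$ already violates the equivalence. If instead $G_1\simeq G_2$, transport $w_1$ along a graph isomorphism $\sigma\colon V(G_1)\to V(G_2)$ to get $w_2$; then $G_2(w_2)\simeq G_1(w_1)$ and $(V(G_2),\rho_{w_2})\simeq(V(G_1),\rho_{w_1})=(V(G_1),\rho_{w_1^{\ast}})$, so (ii) would force $G_1(w_1^{\ast})\simeq G_2(w_2)\simeq G_1(w_1)$, contradicting the last sentence of the previous paragraph. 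The step I expect to be the genuine obstacle is precisely this transplantation when $G_1\not\simeq G_2$: one must realize the tree metric $\rho_{w_1}$ as a shortest-path metric on the given $G_2$ — in effect, match a spanning subtree of $G_1$ against a spanning subtree of $G_2$ — and making this matching work (or seeing which pairs $G_1,G_2$ make it impossible) is where the argument has to be done carefully.
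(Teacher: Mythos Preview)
Your argument tracks the paper's closely. For (i)$\Rightarrow$(ii) both of you invoke Corollary~\ref{c3.2}. For (ii)$\Rightarrow$(i), both argue by contraposition and construct two weights on a single graph with a cycle that yield the same shortest-path metric but non-isomorphic weighted graphs; the only difference is cosmetic. Where you fix a spanning tree and overload all non-tree edges, the paper picks one cycle edge $e_0$, sets $w_1(e_0)=1+|E(G)|$, $w_2(e_0)=2+|E(G)|$, and weight $1$ elsewhere, so that both $\rho_{w_1}$ and $\rho_{w_2}$ collapse to the graph metric on $G\setminus e_0$.

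The obstacle you flag in the case $G_1\not\simeq G_2$ is genuine, and the paper does not resolve it either: it simply writes ``$G_1=G$ and $G=G_2$'' and works with a single graph. In fact the proposition as literally stated is false without some such restriction. If $|V(G_1)|\neq|V(G_2)|$, both sides of the biconditional in (ii) are false for every choice of weights, so (ii) holds vacuously while (i) may fail (take $G_1=K_3$, $G_2=K_4$). Even with equal vertex counts there are counterexamples: for $G_1=C_4$ and $G_2=K_{1,3}$ one checks that no shortest-path metric on $C_4$ is isometric to any tree metric on $K_{1,3}$, since the centre of $K_{1,3}$ lies metrically between each pair of leaves, whereas in any $C_4$-metric no vertex can lie between all three others (the equation $d(v_1,v_2)+d(v_1,v_3)=d(v_2,v_3)$ combined with $d(v_2,v_3)\leq w(\{v_2,v_3\})$ already forces a negative distance). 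So again (ii) holds vacuously but (i) fails. Your ``transplantation'' worry is therefore not a defect of your argument relative to the paper's --- both proofs establish the result only under the tacit hypothesis $G_1\simeq G_2$, and that is evidently the intended content.
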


\begin{proof}
\(\ref{p3.4:s1} \Rightarrow \ref{p3.4:s2}\). This implication is valid by Corollary~\ref{c3.2}.

\(\ref{p3.4:s2} \Rightarrow \ref{p3.4:s1}\). As in the proof of Theorem~\ref{t3.4}, it is easy to see that the implication
\[
(G_1(w_1) \simeq G_2(w_2)) \Rightarrow ((V(G_1), \rho_{w_1}) \simeq (V(G_2), \rho_{w_2}))
\]
is valid. But, in general, the converse implication is false. 

Indeed, suppose that \(G\) is a finite, connected, nonempty graph which contains a cycle \(C\), \(C \subseteq G\). Let \(e_0\) be a fixed edge of \(C\), \(e_0 \in E(C)\), and let \(S_1 := |E(G)|\). Write \(G_1 = G\) and \(G = G_2\). We can define two weights \(w_1 \colon E(G_1) \to \mathbb{R}^{+}\) and \(w_2 \colon E(G_2) \to \mathbb{R}^{+}\) such that
\begin{equation}\label{p3.4:e1}
w_1(e_0) = 1 + |E(G_1)|, \quad w_2(e_0) = 2 + |E(G_2)|
\end{equation}
and \(w_1(e) = w_2(e) = 1\) whenever \(e \in E(G)\) but \(e \neq e_0\). Since
\[
w_1(E(G)) = \{1, 1 + |E(G_1)|\} \neq \{1, 2 + |E(G_1)|\} = w_2(E(G))
\]
holds, the weighted graphs \(G_1(w_1)\) and \(G_2(w_2)\) cannot be isomorphic. Now using \eqref{e3.2} and \eqref{p3.4:e1} we see that both the metric spaces \((V(G_1), \rho_{w_1})\) and \((V(G_2), \rho_{w_2})\) are isometric to the metric space \((V(G\setminus e_0), d)\), where \(G\setminus e_0\) denotes the connected graph obtained from \(G\) by deleting \(e_0\) and \(d\) is the graph metric on \(V(G\setminus e_0)\).
\end{proof}

\begin{remark}\label{r3.5}
The characterization of trees obtained in Proposition~\ref{p3.4} is similar to the characterizations of trees which were given in Corollary~3.6 of~\cite{DMV2013AC} and Corollary~5 of~\cite{DP2013SM}.
\end{remark}

Theorem~\ref{t3.4} and Proposition~\ref{p3.4} give rise the following problem:

\begin{problem}\label{pr1}
Let \(G_1(w_1)\) and \(G_2(w_2)\) be connected, weighted graphs with the strictly positive weights. Find conditions under which every isometry \(f \colon V(G_1) \to V(G_2)\) of the metric spaces \((V(G_1), \rho_{w_1})\) and \((V(G_2), \rho_{w_2})\) is an isomorphism of \(G_1(w_1)\) and \(G_2(w_2)\).
\end{problem}

In the rest of the section we consider the labeled, unrooted trees and related them ultrametric spaces and find some modifications of Theorem~\ref{t2.9}, Theorem~\ref{t3.4}, Proposition~\ref{p3.4} and Problem~\ref{pr1} in this case.

Let \(T = T(l)\) be a labeled tree with the labeling \(l \colon E(T) \to \mathbb{R}^{+}\) and let \(d_l \colon V(T) \times V(T) \to \mathbb{R}^{+}\) be a mapping defined as
\begin{equation}\label{e3.11}
d_l(u, v) = \begin{cases}
0, & \text{if } u = v\\
\max\limits_{v^{*} \in V(P)} l(v^{*}), & \text{if } u \neq v,
\end{cases}
\end{equation}
where \(P\) is the unique path joining \(u\) and \(v\) in \(T\) (cf. Theorem~\ref{t2.9}).

\begin{proposition}\label{p3.5}
The following statements are equivalent for every labeled tree \(T = T(l)\).
\begin{enumerate}
\item\label{p3.5:s1} The function \(d_l\) is an ultrametric on \(V(T)\).
\item\label{p3.5:s2} The function \(d_l\) is a metric on \(V(T)\).
\item\label{p3.5:s3} The inequality
\begin{equation}\label{p3.5:e1}
\max\{l(u_1), l(v_1)\} > 0
\end{equation}
holds for every \(\{u_1, v_1\} \in E(T)\).
\end{enumerate}
\end{proposition}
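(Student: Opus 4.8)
The plan is to prove the cyclic chain of implications $\ref{p3.5:s1} \Rightarrow \ref{p3.5:s2} \Rightarrow \ref{p3.5:s3} \Rightarrow \ref{p3.5:s1}$. The first implication is immediate, since every ultrametric is, in particular, a metric. For $\ref{p3.5:s2} \Rightarrow \ref{p3.5:s3}$ I would argue by contraposition: if \eqref{p3.5:e1} fails for some edge $\{u_1, v_1\} \in E(T)$, then $l(u_1) = l(v_1) = 0$; but the unique path $P$ joining $u_1$ and $v_1$ in $T$ consists of exactly this single edge, so $V(P) = \{u_1, v_1\}$ and hence, by \eqref{e3.11}, $d_l(u_1, v_1) = \max\{l(u_1), l(v_1)\} = 0$ although $u_1 \neq v_1$. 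Thus $d_l$ violates the second axiom of a metric, so $d_l$ is not a metric.

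The heart of the argument is $\ref{p3.5:s3} \Rightarrow \ref{p3.5:s1}$, where I would verify the ultrametric axioms for $d_l$ directly. Non-negativity and $d_l(u,u) = 0$ are built into definition \eqref{e3.11}, and symmetry is clear because the unique path joining $u$ and $v$ and the unique path joining $v$ and $u$ have the same vertex set. To see that $d_l(u,v) = 0$ forces $u = v$, note that if $u \neq v$ then the path $P$ joining them contains a first edge $\{u, x\}$ with $u$, $x \in V(P)$, and \eqref{p3.5:e1} gives $d_l(u,v) = \max_{v^{*} \in V(P)} l(v^{*}) \geqslant \max\{l(u), l(x)\} > 0$.

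It remains to establish the strong triangle inequality $d_l(u,v) \leqslant \max\{d_l(u,w), d_l(w,v)\}$ for all $u$, $v$, $w \in V(T)$; this is the step I expect to be the main obstacle, and it rests on the following purely graph-theoretic fact about trees: for any $u$, $v$, $w \in V(T)$, if $P_{uv}$, $P_{uw}$, $P_{wv}$ denote the unique paths joining the indicated pairs, then $V(P_{uv}) \subseteq V(P_{uw}) \cup V(P_{wv})$. To prove this I would concatenate $P_{uw}$ and $P_{wv}$ into a walk from $u$ to $v$ and invoke the standard fact that in a connected graph every walk joining two vertices contains a path joining them that uses only vertices of the walk; since $T$ is a tree this path must be $P_{uv}$, whence the inclusion. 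Granting this, and treating separately the trivial cases in which two of $u$, $v$, $w$ coincide (where the inequality is immediate because one side reduces to $0$ or the two sides become equal), the strong triangle inequality follows at once:
\[
d_l(u,v) = \max_{v^{*} \in V(P_{uv})} l(v^{*}) \leqslant \max_{v^{*} \in V(P_{uw}) \cup V(P_{wv})} l(v^{*}) = \max\{d_l(u,w),\, d_l(w,v)\},
\]
which completes the verification that $d_l$ is an ultrametric and closes the chain of implications.
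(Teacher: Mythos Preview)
Your proposal is correct and follows essentially the same cyclic scheme as the paper: the paper also proves $\ref{p3.5:s1}\Rightarrow\ref{p3.5:s2}\Rightarrow\ref{p3.5:s3}\Rightarrow\ref{p3.5:s1}$, handles $\ref{p3.5:s2}\Rightarrow\ref{p3.5:s3}$ by computing $d_l$ on an edge (you phrase it contrapositively, the paper directly), and for the strong triangle inequality establishes the same key inclusion $V(P_{uv})\subseteq V(P_{uw})\cup V(P_{wv})$. The only cosmetic difference is that the paper obtains this inclusion by observing that the induced subgraph on $V(P_{uw})\cup V(P_{wv})$ is a connected subtree containing $u$ and $v$, whereas you extract $P_{uv}$ from the concatenated walk; both arguments are equally short and yield the same conclusion.
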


\begin{proof}
\(\ref{p3.5:s1} \Rightarrow \ref{p3.5:s2}\). It follows directly from the definitions of metrics and ultrametrics.

\(\ref{p3.5:s2} \Rightarrow \ref{p3.5:s3}\). Let \ref{p3.5:s2} hold and let \(\{u_1, v_1\} \in E(T)\). Then \(u_1 \neq v_1\) holds because \(T\) has no loops. Since \(d_l\) is a metric, from \(u_1 \neq v_1\) it follows that 
\begin{equation}\label{p3.5:e2}
d_l(u_1, v_1) > 0.
\end{equation}
Since \(P = (u_1, v_1)\) is the unique path joining \(u_1\) and \(v_1\) in \(T\), by~\eqref{e3.11} we have
\[
d_l(u_1, v_1) = \max_{v \in V(P)} l(v) = \max\{l(u_1), l(v_1)\}.
\]
Now using \eqref{p3.5:e2} we obtain~\eqref{p3.5:e1}. 

\(\ref{p3.5:s3} \Rightarrow \ref{p3.5:s1}\). Let \ref{p3.5:s3} hold. It is clear that \(d_l\) is symmetric and nonnegative. Statement \ref{p3.5:s3} and the definition of \(d_l\) imply that the inequality \(d_l(u, v) > 0\) holds for every pairs of distinct \(u\), \(v \in V(T)\). Thus, to complete the proof of validity of \ref{p3.5:s1}, it suffices to show that the strong triangle inequality
\begin{equation}\label{p3.5:e3}
d_l(v_1, v_2) \leqslant \max\{d_l(v_1, v_3), d_l(v_3, v_2)\}
\end{equation}
holds for all \(v_1\), \(v_2\), \(v_3 \in V(T)\). 

Let \(v_1\), \(v_2\), \(v_3 \in V(T)\). It is easy to see \eqref{p3.5:e3} holds if we have \(v_i = v_j\) for some different \(i\), \(j \in \{1,2,3\}\). Suppose that \(v_1 \neq v_2 \neq v_3 \neq v_1\). Write \(P_{i, j}\) for the path joining \(v_i\) and \(v_j\) in \(T\). \(1 \leqslant i < j \leqslant 3\). From~\eqref{e3.11} it follows that
\begin{equation}\label{p3.5:e4}
\max\{d_l(v_1, v_3), d_l(v_3, v_2)\} = \max\{l(v) \colon v \in V(P_{1,3}) \cup V(P_{3,2})\}.
\end{equation}
Let
\[
T_{1,2,3} = T[V(P_{1,3}) \cup V(P_{3,2})] 
\]
be the subgraph of \(T\) induced by \(V(P_{1,3}) \cup V(P_{3,2})\). Since \(v_1\), \(v_2 \in T_{1,2,3}\) and \(T_{1,2,3}\) is connected, the uniqueness of path joining \(v_1\) and \(v_2\) implies \(P_{1,2} \subseteq T_{1,2,3}\). Hence, \(V(P_{1,2}) \subseteq V(T_{1,2,3})\) holds. Consequently
\begin{equation}\label{p3.5:e5}
\max\{l(v) \colon v \in V(P_{1,2})\} \leqslant \max\{l(v) \colon v \in V(T_{1,2,3})\}.
\end{equation}
The equality 
\[
d_l(v_1, v_2) = \max\{l(v) \colon v \in V(P_{1,2})\},
\]
\eqref{p3.5:e4} and \eqref{p3.5:e5} imply \eqref{p3.5:e3}.
\end{proof}

\begin{remark}\label{r3.6}
If \(T = T(l)\) is a labeled tree but there is \(\{u_1, v_1\} \in E(T)\) such that \(l(u_1) = l(v_1) = 0\), then the mapping \(d_l \colon V(T) \times V(T) \to \mathbb{R}^{+}\) is an pseudoultrametric on \(V(T)\), i.e., \(d_l\) is a symmetric, nonnegative mapping satisfying the strong triangle inequality.
\end{remark}

\begin{corollary}\label{c3.6}
Let \(T_1 = T_1(l_1)\) and \(T_2 = T_2(l_2)\) be labeled trees such that
\begin{equation}\label{c3.6:e1}
\max\{l_i(u_i), l_i(v_i)\} > 0
\end{equation}
for every \(\{u_i, v_i\} \in E(T_i)\), \(i = 1\), \(2\). Then the implication
\begin{equation}\label{c3.6:e2}
(T_1(l_1) \simeq T_2(l_2)) \Rightarrow (V(T_1, d_{l_1}) \simeq V(T_2, d_{l_2}))
\end{equation}
is valid.
\end{corollary}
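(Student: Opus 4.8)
The plan is to prove the stronger statement that \emph{every} isomorphism $f \colon V(T_1) \to V(T_2)$ of the labeled trees $T_1(l_1)$ and $T_2(l_2)$ is an isometry of the ultrametric spaces $(V(T_1), d_{l_1})$ and $(V(T_2), d_{l_2})$; since such an $f$ is bijective, \eqref{c3.6:e2} follows at once. Note first that condition \eqref{c3.6:e1} together with Proposition~\ref{p3.5} guarantees that $d_{l_1}$ and $d_{l_2}$ really are ultrametrics, so the conclusion is meaningful.

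So I would fix an isomorphism $f$ of $T_1(l_1)$ and $T_2(l_2)$; by Definition~\ref{d1.6} it satisfies both the adjacency condition \eqref{d1.5:e1} and the label condition \eqref{d1.6e1}. I must check $d_{l_1}(u,v) = d_{l_2}(f(u), f(v))$ for all $u$, $v \in V(T_1)$. The case $u = v$ is immediate from injectivity of $f$. For $u \neq v$, let $P = (x_0, x_1, \ldots, x_k)$ be the unique path joining $u = x_0$ and $v = x_k$ in $T_1$, so that $d_{l_1}(u,v) = \max_{0 \leqslant j \leqslant k} l_1(x_j)$ by \eqref{e3.11}. The key observation is that $Q := (f(x_0), f(x_1), \ldots, f(x_k))$ is a path in $T_2$ joining $f(u)$ and $f(v)$: its vertices are pairwise distinct because $f$ is injective and the $x_j$ are distinct, and consecutive vertices are adjacent in $T_2$ by \eqref{d1.5:e1}. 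Since $T_2$ is a tree, $Q$ is the unique such path, so $d_{l_2}(f(u), f(v)) = \max_{w \in V(Q)} l_2(w)$ by \eqref{e3.11}.

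It then remains to combine this with \eqref{d1.6e1}: since $V(Q) = f(V(P))$ and $l_2(f(x_j)) = l_1(x_j)$ for each $j$, we obtain
\[
d_{l_2}(f(u), f(v)) = \max_{0 \leqslant j \leqslant k} l_2(f(x_j)) = \max_{0 \leqslant j \leqslant k} l_1(x_j) = d_{l_1}(u, v),
\]
which is the desired equality. The only step that is not pure bookkeeping is the verification that $f$ carries the path of $T_1$ joining $u$ and $v$ to the (necessarily unique) path of $T_2$ joining $f(u)$ and $f(v)$, and even that is routine once one uses that images of paths under tree isomorphisms are paths; so I do not anticipate any genuine obstacle in carrying out this argument.
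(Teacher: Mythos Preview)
Your argument is correct and is exactly the approach the paper takes: the paper's proof simply notes that Proposition~\ref{p3.5} makes the ultrametrics well-defined and that \eqref{c3.6:e2} ``follows directly from Definition~\ref{d1.6}, formula~\eqref{e3.11}'', which is precisely the computation you have written out in full. Your stronger statement that every labeled-tree isomorphism is an isometry is in fact stated later in the paper as Lemma~\ref{l3.17}, with the same one-line justification.
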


\begin{proof}
By Proposition~\ref{p3.5}, inequality \eqref{c3.6:e1} implies that \(d_{l_1}\) and \(d_{l_2}\) are ultrametrics. Hence, the right hand side in \eqref{c3.6:e1} is defined correctly. Now \eqref{c3.6:e2} follows directly from Definition~\ref{d1.6}, formula~\eqref{e3.11}.
\end{proof}

The following example shows that the converse implication
\[
(V(T_1, d_{l_1}) \simeq V(T_2, d_{l_2})) \Rightarrow (T_1(l_1) \simeq T_2(l_2))
\]
is false in general.

\begin{example}\label{ex3.7}
Let \(a_1\), \(\ldots\), \(a_5\) be a sequence of real numbers such that
\[
0 = a_1 < a_2 \leqslant \ldots \leqslant a_5.
\]
Then the ultrametric spaces \(V(T_1, d_{l_1})\) and \(V(T_2, d_{l_2})\) are isometric for the labeled trees \(T_1 = T_1(l_1)\) and \(T_2 = T_2(l_2)\) depicted by Figure~\ref{fig2}.
\end{example}

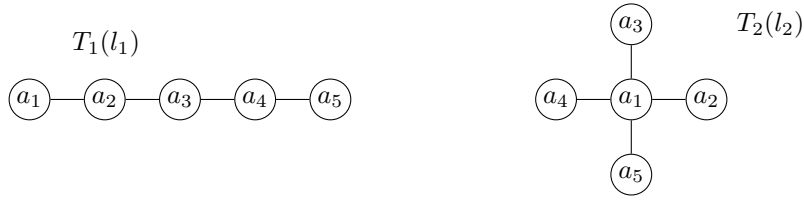
\begin{figure}[h]
\begin{center}
\begin{tikzpicture}[grow=right,
level 1/.style={level distance=1cm,sibling distance=25mm},
level 2/.style={level distance=1cm,sibling distance=12mm},
level 3/.style={level distance=1cm,sibling distance=6mm},
solid node/.style={circle,draw,inner sep=1.5,fill=black},
hollow node/.style={circle,draw,inner sep=1.5}
]

\node [hollow node, label={[label distance=10pt] above right:{\(T_1(l_1)\)}}] at (0,0) {\(a_1\)}
child{node [hollow node]{\(a_2\)}
	child {node [hollow node]{\(a_3\)}
			child {node [hollow node]{\(a_4\)}
				child {node [hollow node]{\(a_5\)}}
			}
	}
};
\tikzset{
level 1/.style={level distance=1cm,sibling angle=90},
level 2/.style={level distance=1cm,sibling distance=12mm},
level 3/.style={level distance=1cm,sibling distance=6mm},
solid node/.style={circle,draw,inner sep=1.5,fill=black},
hollow node/.style={circle,draw,inner sep=1.5}
}
\node(0) [hollow node] at (8,0) {\(a_1\)}
child [grow=0] {node [hollow node]{\(a_2\)}}
child [grow=90] {node [hollow node, label={[label distance=1cm] right:{\(T_2(l_2)\)}}]{\(a_3\)}}
child [grow=180] {node [hollow node]{\(a_4\)}}
child [grow=-90]{node [hollow node]{\(a_5\)}};
\end{tikzpicture}
\end{center}
\caption{The labeled path \(T_1(l_1)\) and the labeled star \(T_2(l_2)\) generate some isometric ultrametric spaces.}
\label{fig2}
\end{figure}

Now we expand the construction of mapping \(d_l\), defined by formula~\eqref{e3.11} for labeled trees, to the case of arbitrary finite connected, labeled graph.

Let \(G = G(l)\) be a connected, finite graph with a labeling \(l \colon V(G) \to \mathbb{R}^{+}\) and let \(\rho_l \colon V(G) \times V(G) \to \mathbb{R}^{+}\) be a mapping defined as
\begin{equation}\label{e3.12}
\rho_l(u, v) = \begin{cases}
0, & \text{if } u = v\\
\min\limits_{P \in \mathcal{P}_{u,v}}\max\limits_{v^{*} \in V(P)} l(v^{*}), & \text{if } u \neq v,
\end{cases}
\end{equation}
where \(\mathcal{P}_{u,v}\) is the set of all paths joining \(u\) and \(v\) in \(G\).

In the formulation of the next theorem we use the notion of \emph{spanning tree}. Recall that a tree \(T\) is a \emph{spanning tree} for a graph \(G\) if \(T \subseteq G\) and \(V(T) = V(G)\) hold. It is well-known that a graph \(G\) is connected if and only if \(G\) contains a spanning tree (see, for example, \cite[Theorem~4.6]{BM}).

\begin{theorem}\label{t3.9}
For every connected graph \(G = G(l)\) with a labeling \(l \colon V(G) \to \mathbb{R}^{+}\) there is a labeled tree \(T = T(l)\) such that \(T\) is a spanning tree for \(G\) and the equality \(\rho_l = d_l\) holds.
\end{theorem}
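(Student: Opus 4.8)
The plan is to reduce the assertion to the classical fact that a minimum-weight spanning tree is a minimum-bottleneck spanning tree. First I would replace the vertex labeling by an edge weight: for each \(\{u, v\} \in E(G)\) put \(w(\{u, v\}) := \max\{l(u), l(v)\}\). For any path \(P = (x_0, x_1, \ldots, x_k)\) in \(G\) one has
\[
\max_{1 \leqslant i \leqslant k} w(\{x_{i-1}, x_i\}) = \max_{0 \leqslant i \leqslant k} l(x_i) = \max_{v^{*} \in V(P)} l(v^{*}),
\]
so, comparing with \eqref{e3.12}, for all distinct \(u\), \(v \in V(G)\),
\[
\rho_l(u, v) = \min_{P \in \mathcal{P}_{u, v}} \max_{e \in E(P)} w(e);
\]
that is, \(\rho_l\) is exactly the bottleneck (minimax) distance associated with \(w\). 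The case \(|V(G)| = 1\) is trivial (take \(T = G\)), so I may assume \(|V(G)| \geqslant 2\); then \(G\) has an edge and admits a spanning tree.

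Next I would fix a spanning tree \(T\) of \(G\) minimizing \(\sum_{e \in E(T)} w(e)\) among all spanning trees of \(G\) (such \(T\) exists since \(G\) is finite and connected) and claim that, for all distinct \(u\), \(v \in V(T) = V(G)\),
\[
\max_{e \in E(P_{u, v})} w(e) = \rho_l(u, v),
\]
where \(P_{u, v}\) is the unique path joining \(u\) and \(v\) in \(T\). The inequality ``\(\geqslant\)'' is immediate because \(P_{u, v} \in \mathcal{P}_{u, v}\). For ``\(\leqslant\)'' I would use the usual exchange argument: let \(e_0 \in E(P_{u, v})\) have maximal \(w\)-weight and suppose some \(Q \in \mathcal{P}_{u, v}\) satisfies \(\max_{e \in E(Q)} w(e) < w(e_0)\). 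Deleting \(e_0\) from \(T\) yields two components which separate \(u\) from \(v\); since \(Q\) joins \(u\) and \(v\), some edge \(e'\) of \(Q\) has its endpoints in different components, and the graph obtained from \(T\) by deleting \(e_0\) and adding \(e'\) is again a spanning tree of \(G\), of strictly smaller total \(w\)-weight because \(w(e') \leqslant \max_{e \in E(Q)} w(e) < w(e_0)\). This contradicts the minimality of \(T\).

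Finally, for distinct \(u\), \(v\) the unique path joining them in \(T\) is \(P_{u, v}\), so by \eqref{e3.11} and the two identities above,
\[
d_l(u, v) = \max_{v^{*} \in V(P_{u, v})} l(v^{*}) = \max_{e \in E(P_{u, v})} w(e) = \rho_l(u, v),
\]
while \(d_l(u, u) = 0 = \rho_l(u, u)\); hence \(\rho_l = d_l\) on \(V(G)\), and \(T = T(l)\) is the desired spanning tree. The only step that is not routine bookkeeping with \eqref{e3.11} and \eqref{e3.12} is the exchange argument establishing the minimax property of the minimum spanning tree, and that is where I expect the substance of the proof to lie.
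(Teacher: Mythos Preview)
Your proof is correct and takes a genuinely different route from the paper's. The paper argues by induction on \(\Delta(G) = |E(G)| + 1 - |V(G)|\): at each step it locates a cycle \(C\), picks a vertex \(v_1 \in V(C)\) of maximum label, deletes an edge of \(C\) incident to \(v_1\), and verifies directly that \(\rho_l\) is unchanged on the resulting graph; the induction hypothesis then furnishes the spanning tree. You instead translate the problem into a bottleneck shortest-path problem by passing to the edge weight \(w(\{u,v\}) = \max\{l(u), l(v)\}\), and then prove (via the cycle-exchange argument) the classical fact that a minimum-weight spanning tree realizes all minimax path values. Your route is shorter and ties the result to a well-known piece of combinatorial optimization, so that once the vertex-to-edge translation is noticed the existence of \(T\) is essentially immediate; the paper's route is entirely self-contained and makes explicit \emph{which} edge to remove at each stage (one touching the largest-label vertex of a cycle), which is in spirit a reverse-delete construction of the same tree.
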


\begin{proof}
Let us denote by \(\Delta(G)\) the number \(|E(G)| + 1 -|V(G)|\),
\[
\Delta(G) = |E(G)| + 1 -|V(G)|.
\]
We give a proof by induction on \(\Delta(G)\). 

Since \(G\) is connected, we have \(|V(G)| \geqslant |E(G)| + 1\). The equality \(|V(G)| = |E(G)| + 1\) holds if and only if \(G\) is tree (see, for example, \cite[Corollary~1.53]{Die2005}). Hence, for \(\Delta(G) = 0\), the graph \(G\) is a tree and the theorem is valid. Suppose the theorem is valid if \(\Delta(G) \leqslant N\) for some nonnegative integer \(N\). Let \(\Delta(G) = N +1\) hold. Then \(G\) contains a cycle \(C\). Let \(v_1\) be a vertex of \(C\) such that
\begin{equation}\label{t3.9:e2}
l(v_1) = \max\{l(v) \colon v \in V(C)\}
\end{equation}
and let \(u_1\) be a vertex of \(C\) for which the edge \(e_1 = \{u_1, v_1\}\) belongs to \(E(C)\). Write \(G^{1}\) for the graph \(G \setminus \{e_1\}\) obtained from \(G\) by deleting \(e_1\). Then \(G^{1}\) is connected and we have 
\[
\Delta(G^{1}) \leqslant N-1 \quad \text{and} \quad V(G^{1}) = V(G).
\]
Hence, by induction hypothesis, there is a spanning tree \(T^{1}\) with the same labeling \(l \colon V(T^{1}) \to \mathbb{R}^{+}\) and satisfying the equality \(\rho_{l}^{1} = d_l\) with
\begin{equation}\label{t3.9:e3}
\rho_{l}^{1}(u, v) = \begin{cases}
0, & \text{if } u = v\\
\min\limits_{P \in \mathcal{P}_{u,v}^{1}}\max\limits_{v^{*} \in V(P)} l(v^{*}), & \text{if } u \neq v,
\end{cases}
\end{equation}
where \(\mathcal{P}_{u,v}^{1}\) is the set of all paths joining \(u\) and \(v\) in \(G^{1}\). To complete the proof it suffices to show that \(\rho_{l}^{1} = \rho_{l}\). Since \(\mathcal{P}_{u,v}^{1} \subseteq \mathcal{P}_{u,v}\) holds, we have \(\rho_{l}^{1}(u, v) \geqslant \rho_{l}(u, v)\) for all \(u\), \(v \in V(G)\). Hence, \(\rho_{l}^{1} = \rho_{l}\) if and only if
\begin{equation}\label{t3.9:e4}
\rho_{l}^{1}(u, v) \leqslant \rho_{l}(u, v)
\end{equation}
holds for all \(u\), \(v \in V(G)\). Inequality \eqref{t3.9:e4} holds if, for every path \(P \in \mathcal{P}_{u,v}\), there is a path \(P^{1} \in \mathcal{P}_{u,v}^{1}\) such that
\begin{equation}\label{t3.9:e5}
\max\limits_{v^{*} \in V(P)} l(v^{*}) \geqslant \max\limits_{v^{*} \in V(P^{1})} l(v^{*}).
\end{equation}
The existence of the path \(P^{1} \in \mathcal{P}_{u,v}^{1}\) satisfying \eqref{t3.9:e5} is trivial if \(e_1 \notin E(P)\). Let \(e_1 \in E(P)\) and let \(C \setminus e_1\) and \(P \setminus e_1\) be connected graphs obtained from \(C\) and, respectively, from \(P\) by deleting \(e_1\). Write \(W\) for the graph with 
\[
V(W) = V(P) \cup V(C)
\]
and
\[
E(W) = E(P \setminus e_1) \cup E(C \setminus e_1).
\]
Then we have
\begin{multline}\label{t3.9:e6}
\max\limits_{v^{*} \in V(W)} l(v^{*}) = \max\left\{\max\limits_{v^{*} \in V(P \setminus e_1)} l(v^{*}), \max\limits_{v^{*} \in V(C\setminus e_1)} l(v^{*})\right\}\\
= \max\left\{\max\limits_{v^{*} \in V(P \setminus e_1)} l(v^{*}), l(v_1)\right\} = \max\limits_{v^{*} \in V(P \setminus e_1)} l(v^{*}) = \max\limits_{v^{*} \in V(P)} l(v^{*}).
\end{multline}
Since \(W\) is a connected graph, there is a path \(P^{2} \subseteq W\) joining \(u\) and \(v\) in \(W\). Consequently,
\begin{equation}\label{t3.9:e7}
\max\limits_{v^{*} \in V(P^{2})} l(v^{*}) \leqslant \max\limits_{v^{*} \in V(W)} l(v^{*})
\end{equation}
holds. Inequality \eqref{t3.9:e5} follows from \eqref{t3.9:e6} and \eqref{t3.9:e7} with \(P^{1} = P^{2}\).
\end{proof}

\begin{corollary}\label{c3.10}
The following statements are equivalent for every connected, labeled graph \(G = G(l)\):
\begin{enumerate}
\item\label{c3.10:s1} The function \(\rho_l\) is an ultrametric on \(V(G)\).
\item\label{c3.10:s2} The function \(\rho_l\) is a metric on \(V(G)\).
\item\label{c3.10:s3} The inequality
\begin{equation}\label{c3.10:e1}
\max\{l(u_1), l(v_1)\} > 0
\end{equation}
holds for every \(\{u_1, v_1\} \in E(G)\).
\end{enumerate}
\end{corollary}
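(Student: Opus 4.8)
The plan is to prove the cycle of implications
$\ref{c3.10:s1} \Rightarrow \ref{c3.10:s2} \Rightarrow \ref{c3.10:s3} \Rightarrow \ref{c3.10:s1}$,
reducing the general case of a connected labeled graph to the already-settled tree case of Proposition~\ref{p3.5} by means of the spanning-tree construction of Theorem~\ref{t3.9}.

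The implication $\ref{c3.10:s1} \Rightarrow \ref{c3.10:s2}$ needs nothing, since every ultrametric is a metric by definition. For $\ref{c3.10:s2} \Rightarrow \ref{c3.10:s3}$ I would argue by contraposition: if some $\{u_1, v_1\} \in E(G)$ satisfied $l(u_1) = l(v_1) = 0$, then the single-edge path $(u_1, v_1)$ belongs to $\mathcal{P}_{u_1, v_1}$, so formula~\eqref{e3.12} forces $\rho_l(u_1, v_1) = 0$; but $u_1 \neq v_1$ because $G$ has no loops, and hence $\rho_l$ cannot be a metric.

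For the remaining implication $\ref{c3.10:s3} \Rightarrow \ref{c3.10:s1}$ I would invoke Theorem~\ref{t3.9} to obtain a labeled spanning tree $T = T(l)$ of $G$, carrying the restriction of the labeling $l$, with $V(T) = V(G)$ and $\rho_l = d_l$. Since $E(T) \subseteq E(G)$, the assumption~\ref{c3.10:s3} about the edges of $G$ holds in particular for every edge of $T$; this is exactly condition~\ref{p3.5:s3} of Proposition~\ref{p3.5} applied to the labeled tree $T(l)$. That proposition then yields that $d_l$ is an ultrametric on $V(T)$, and since $\rho_l = d_l$ and $V(T) = V(G)$ we conclude that $\rho_l$ is an ultrametric on $V(G)$.

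I do not expect a genuine obstacle here; the argument is a short reduction to Proposition~\ref{p3.5}. The one point deserving a moment's care is the observation that the spanning tree delivered by Theorem~\ref{t3.9} carries the \emph{restriction} of $l$ to $V(T) = V(G)$, so that checking the tree-side hypothesis of Proposition~\ref{p3.5} is literally the hypothesis~\ref{c3.10:s3} imposed on $G$ itself.
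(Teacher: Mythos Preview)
Your proof is correct and follows essentially the same approach as the paper: both arguments reduce to Proposition~\ref{p3.5} via the spanning tree of Theorem~\ref{t3.9} for the implication $\ref{c3.10:s3} \Rightarrow \ref{c3.10:s1}$, and both observe directly from~\eqref{e3.12} that a zero-label edge forces $\rho_l$ to vanish on a pair of distinct vertices. The only difference is organizational---you run a single cycle $\ref{c3.10:s1} \Rightarrow \ref{c3.10:s2} \Rightarrow \ref{c3.10:s3} \Rightarrow \ref{c3.10:s1}$, whereas the paper establishes $\ref{c3.10:s1} \Leftrightarrow \ref{c3.10:s2}$ first and then $\ref{c3.10:s3} \Rightarrow \ref{c3.10:s1}$ and $\ref{c3.10:s1} \Rightarrow \ref{c3.10:s3}$ separately---but the substance is the same.
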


\begin{proof}
Proposition~\ref{p3.5} and Theorem~\ref{t3.9} imply that \(\ref{c3.10:s1} \Leftrightarrow \ref{c3.10:s2}\) is valid. 

Let \(G = G(l)\) be a connected, labeled graph and let \(T = T(l)\) be a spanning tree such that \(d_l = \rho_l\) holds. Since \(E(G) \supseteq E(T)\) holds, statement \ref{c3.10:s3} of the present corollary implies statement \ref{p3.5:s3} of Proposition~\ref{p3.5}. Thus, we also have \(\ref{c3.10:s3} \Rightarrow \ref{c3.10:s1}\).

Let us prove the validity \(\ref{c3.10:s1} \Rightarrow \ref{c3.10:s3}\). Let \ref{c3.10:s1} hold. Suppose now that \(l(u_1) = l(v_1) = 0\) for some \(\{u_1, v_1\} \in E(G)\). Then using~\eqref{e3.12} we obtain \(\rho_{l}(u_1, v_1) = 0\), contrary to \ref{c3.10:s1}.
\end{proof}

Example~\ref{ex3.7} gives rise the following problem (cf.~Problem~\ref{pr1}).

\begin{problem}\label{pr2}
Let \(G_1 = G_1(l_1)\) and \(G_2 = G_2(l_2)\) be labeled, connected graphs such that 
\[
\max\{l_i(u_i), l_i(v_i)\} > 0
\]
for every \(\{u_i, v_i\} \in E(G_i)\), \(i = 1\), \(2\). Find conditions under which
\begin{equation}\label{pr2:e1}
(G_1(l_1) \simeq G_2(l_2)) \Leftrightarrow ((V(G_1), \rho_{l_1}) \simeq (V(G_2), \rho_{l_2}))
\end{equation}
is valid. Is the statement 
\begin{itemize}
\item Every isometry \(f \colon V(G_1) \to V(G_2)\) of the ultrametric spaces \((V(G_1), \rho_{l_1})\) and \((V(G_2), \rho_{l_2})\) is an isomorphism of \(G_1(l_1)\) and \(G_2(l_2)\)
\end{itemize}
true if \eqref{pr2:e1} is valid?
\end{problem}

In what follows we give a partial solution of Problem~\ref{pr2} for labeled rooted trees.

\begin{theorem}\label{t3.12}
Let \(T_i = T_i(r_i, l_i)\) be labeled rooted tree such that \(\delta^{+}(u_i) \neq 1\) and
\begin{equation}\label{t3.12:f1}
(\delta^+(u_i) =0) \Leftrightarrow (l(u_i)=0)
\end{equation}
for every \(u_i \in V(T_i)\) and, in addition, \(l(v_i) < l(u_i)\) holds whenever $v_i$ is a direct successor of $u_i$, \(i = 1\), \(2\). Then the following statements hold:
\begin{enumerate}
\item \label{t3.12:s1} \((T_1(l_1) \simeq T_2(l_2)) \Leftrightarrow ((V(T_1), d_{l_1}) \simeq (V(T_2), d_{l_2}))\) is valid.
\item \label{t3.12:s2} If the ultrametric spaces \((V(T_1), d_{l_1})\) and \((V(T_2), d_{l_2})\) are isometric, then the following conditions are equivalent:
\begin{enumerate}
\item \label{t3.12:s2:s1} \(|V(T_1)| = |V(T_2)| = 1\).
\item \label{t3.12:s2:s2} Every isometry \(f \colon V(T_1) \to V(T_2)\) of \((V(T_1), d_{l_1})\) and \((V(T_2), d_{l_2})\) is an isomorphism of \(T_1(l_1)\) and \(T_2(l_2)\).
\end{enumerate}
\end{enumerate}
\end{theorem}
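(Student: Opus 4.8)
The plan is to prove statement~\ref{t3.12:s1} by passing to representing trees, and then to read off statement~\ref{t3.12:s2} from it together with one explicit ``bad'' self-isometry. If $|V(T_1)| = |V(T_2)| = 1$, then both trees are single vertices labeled~$0$, both spaces are one-point, and every assertion is immediate; so I assume throughout that $|V(T_1)| \geqslant 2$ and $|V(T_2)| \geqslant 2$. The hypotheses on $T_i = T_i(r_i, l_i)$ are precisely those of Theorem~\ref{t1.7}; iterating the inequality $l_i(v) < l_i(u)$ up the tree gives $l_i(v) < l_i(r_i)$ for every $v \in V(T_i)\setminus\{r_i\}$, so $r_i$ is the \emph{unique} vertex of $T_i$ carrying the maximal label. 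Hence every isomorphism of the labeled trees $T_1(l_1)$ and $T_2(l_2)$ automatically maps $r_1$ to $r_2$, so that here ``$T_1(l_1)\simeq T_2(l_2)$'' is the same as ``$T_1(r_1,l_1)\simeq T_2(r_2,l_2)$ as labeled rooted trees''. Moreover $\max\{l_i(u),l_i(v)\}>0$ for every edge $\{u,v\}$, so $d_{l_i}$ is an ultrametric by Proposition~\ref{p3.5}; write $U_i := (V(T_i), d_{l_i})$. The implication ``$\Rightarrow$'' in \ref{t3.12:s1} is now exactly Corollary~\ref{c3.6}.

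For ``$\Leftarrow$'' in \ref{t3.12:s1} I would first describe the balls of $U_i$ in terms of $T_i$. Write $(T_i)_v$ for the rooted subtree of $T_i$ lying below $v$. From formula~\eqref{e3.11} one checks that, for $c \in V(T_i)$ and $\rho \geqslant 0$, the ball of $U_i$ with centre $c$ and radius $\rho$ is the connected component of $c$ in the induced subgraph $T_i[\{w : l_i(w) \leqslant \rho\}]$ when $l_i(c)\leqslant\rho$, and is $\{c\}$ otherwise. Since all proper successors of a vertex have strictly smaller label, the component of $v$ in $T_i[\{w : l_i(w)\leqslant l_i(v)\}]$ is exactly $V((T_i)_v)$; conversely, the vertex of any non-singleton such component closest to $r_i$ is an internal node $v$ and the component equals $V((T_i)_v)$. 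Hence the balls of $U_i$ are exactly the singletons $\{c\}$, $c\in V(T_i)$, together with the sets $V((T_i)_v)$ for $v$ an internal node of $T_i$, and $\diam V((T_i)_v) = l_i(v)$. Reading off the maximal-proper-subball relation, the maximal proper subballs of $V((T_i)_v)$ are the singleton $\{v\}$, the singletons $\{w\}$ for the leaf children $w$ of $v$ in $T_i$, and the sets $V((T_i)_w)$ for the internal children $w$ of $v$. Therefore the representing tree $T_{U_i}$ is isomorphic, as a labeled rooted tree, to the tree $\widehat{T_i}$ obtained from $T_i$ by attaching to every internal node one extra leaf child labeled~$0$.

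Now assume $U_1\simeq U_2$. By Theorem~\ref{t2.7} we get $T_{U_1}\simeq T_{U_2}$, hence $\widehat{T_1}\simeq\widehat{T_2}$ as labeled rooted trees; fix such an isomorphism $\psi$. Since $\psi$ preserves labels and degrees, it sends internal nodes to internal nodes, and for every internal node $v$ it restricts to a bijection between the leaf children of $v$ and those of $\psi(v)$ (these families have equal size, since the families of internal children do and $\psi$ preserves degrees). Each internal node has exactly one ``new'' leaf child; permuting, within each such family of siblings, the mutually interchangeable leaf children, one obtains from $\psi$ an isomorphism $\psi'$ of $\widehat{T_1}$ and $\widehat{T_2}$ that carries new leaves to new leaves, hence maps $V(T_1)$ onto $V(T_2)$. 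As the edges of $T_i$ are precisely the edges of $\widehat{T_i}$ joining two vertices of $V(T_i)$, and $\psi'(r_1)=r_2$, the restriction $\psi'|_{V(T_1)}$ is an isomorphism of the labeled rooted trees $T_1(r_1,l_1)$ and $T_2(r_2,l_2)$; thus $T_1(l_1)\simeq T_2(l_2)$, which proves \ref{t3.12:s1}. I expect the two genuinely non-routine points to be the ball computation identifying $T_{U_i}$ with $\widehat{T_i}$, and the fact that $\psi$ may be adjusted to respect the partition of leaves into ``new'' and ``old'' — equivalently, that $T\mapsto\widehat T$ is injective on isomorphism classes; this last step is where the construction genuinely loses information and is the main obstacle.

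Finally, for \ref{t3.12:s2}: under our assumption $|V(T_i)|\geqslant 2$ the condition \ref{t3.12:s2:s1} is false, so I must show that \ref{t3.12:s2:s2} is false, i.e.\ produce an isometry $f\colon V(T_1)\to V(T_2)$ which is not an isomorphism of $T_1(l_1)$ and $T_2(l_2)$. By \ref{t3.12:s1} and Corollary~\ref{c3.6} there is an isomorphism $g$ of $T_1(l_1)$ and $T_2(l_2)$, and $g$ is also an isometry $U_1\to U_2$; it therefore suffices to produce a self-isometry $\phi$ of $U_1$ that is not an automorphism of $T_1(l_1)$, since then $f := g\circ\phi$ works (if $g\circ\phi$ were a labeled-tree isomorphism, so would $\phi = g^{-1}\circ(g\circ\phi)$ be). To build $\phi$: the set $S = \{v\in V(T_1): l_1(v)>0\}$ is nonempty and closed under passing to parents, so it induces a subtree containing $r_1$; pick a vertex $v$ of this subtree farthest from $r_1$, so that none of its children in $T_1$ belong to $S$. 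Then $l_1(v)>0$, so $v$ is internal with $\delta^+(v)\geqslant 2$, while every child of $v$ is a leaf; fix a leaf child $\ell$ of $v$ and let $\phi$ transpose $v$ and $\ell$ and fix every other vertex. For each $x\notin\{v,\ell\}$ the path joining $\ell$ and $x$ in $T_1$ passes through $v$, so by~\eqref{e3.11} we have $d_{l_1}(\ell,x)=\max\{l_1(\ell),d_{l_1}(v,x)\}=d_{l_1}(v,x)$, while $d_{l_1}(v,\ell)=l_1(v)$ is unchanged; hence $\phi$ is a self-isometry of $U_1$. But $l_1(v)>0=l_1(\ell)$, so $\phi$ is not an automorphism of the labeled tree $T_1(l_1)$, and we are done.
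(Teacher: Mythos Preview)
Your proof is correct, and for part~\ref{t3.12:s2} it is essentially identical to the paper's: both produce the transposition of a leaf and its parent as an isometry that fails to be a labeled-tree isomorphism (your extra requirement that all children of $v$ be leaves is harmless but unnecessary).

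For part~\ref{t3.12:s1} the underlying idea is the same --- the representing tree $T_{U_i}$ is obtained from $T_i$ by attaching one new leaf to every internal node, and this operation is injective on isomorphism classes --- but the routes differ. The paper reaches this via auxiliary ultrametric spaces $X_i$ with $T_{X_i}\simeq T_i(r_i,l_i)$ (Theorem~\ref{t1.7}), identifies $U_i$ with the ballean $(\mathbf{B}_{X_i},d_{H_i})$ using the Hausdorff distance (Proposition~\ref{p2.8}, Lemma~\ref{l3.13}), and then invokes Lemma~\ref{l3.15} and Lemma~\ref{l3.16}. You instead compute the balls of $U_i$ directly from~\eqref{e3.11} and read off $T_{U_i}\simeq\widehat{T_i}$ without ever mentioning Hausdorff distance; your sibling-permutation argument is then exactly the content of Lemma~\ref{l3.16} proved by hand. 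Your route is more self-contained and avoids the detour through $(\mathbf{B}_{X_i},d_H)$, at the cost of redoing the ball computation that the paper has already packaged as Lemma~\ref{l3.15}.
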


In proving this theorem we will use some results describing the structure of balleans \(\mathbf{B}_X\) of finite ultrametric spaces \((X, d)\) (see, in particular, Proposition~\ref{p2.8}).

Let \(A\) and \(B\) be two nonempty bounded subsets of a metric space \((X, d)\). The \emph{Hausdorff distance} \(d_H(A, B)\) between \(A\) and \(B\) is defined by
\begin{equation}\label{e3.28}
d_H(A, B) := \max\{\sup_{a \in A} \inf_{b \in B} d(a, b), \sup_{b \in B} \inf_{a \in A} d(a, b)\}.
\end{equation}
The definition and some properties of the Hausdorff distance can be found in~\cite{BBI2001}. See also \cite{Qiu2009pNUAA, Qiu2014pNUAA} for properties of Hausdorff distance in ultrametric spaces. We note only that if \(\{a\}\) and \(\{b\}\) are singular balls in \((X, d)\), then~\eqref{e3.28} implies
\begin{equation*}
d_H(\{a\}, \{b\}) = d(a, b).
\end{equation*}

The following lemma is a part of Theorem~2.5 from~\cite{Dov2019pNUAA}.

\begin{lemma}\label{l3.13}
Let $(X,d)$ be a finite nonempty ultrametric space with the representing tree $T_X$ and let $B_1$ and $B_2$ be distinct balls in $(X, d)$. If \(P\) is the path joining \(B_1\) and \(B_2\) in \(T_X\), then
\begin{equation*}
d_H(B_1, B_2) = \max_{u \in V(P)} l(u).
\end{equation*}
\end{lemma}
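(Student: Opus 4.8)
The plan is to reduce the lemma to a single equality about diameters. Let $w$ be the smallest ball of $(X,d)$ containing $B_1\cup B_2$; since ultrametric balls are pairwise nested or disjoint, such a ball exists, and by Proposition~\ref{p2.8} it is a vertex of $T_X$, namely the least common ancestor of $B_1$ and $B_2$ (the ancestors of a vertex $B$ in $T_X$ are exactly the balls containing $B$). Consequently $w$ is the highest vertex of the path $P$: the path ascends from $B_1$ to $w$ and then descends from $w$ to $B_2$. Because the labeling of $T_X$ satisfies the monotonicity condition of Theorem~\ref{t1.7} (so $l$ strictly decreases from a node to each direct successor), the labels increase along the ascending portion and decrease along the descending portion. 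Hence $\max_{u\in V(P)}l(u)=l(w)=\diam w$, and it remains to prove
\[
d_H(B_1,B_2)=\diam w .
\]

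For the upper bound I would note that $B_1\subseteq w$ and $B_2\subseteq w$, so every pair $a\in B_1$, $b\in B_2$ lies in $w$ and thus $d(a,b)\le\diam w$. Both iterated suprema in~\eqref{e3.28} are then at most $\diam w$, giving $d_H(B_1,B_2)\le\diam w$.

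For the lower bound I would exploit the complete multipartite structure of the diametrical graph. Applying the construction of $T_X$ to the ball $w$, the direct successors of $w$ are the parts $X_1,\dots,X_k$ ($k\ge2$) of $G_{D,w}$ (Theorem~\ref{t13}), and for $a\in X_i$, $b\in X_j$ with $i\ne j$ one has $d(a,b)=\diam w$: indeed, the path joining the leaves $\{a\}$ and $\{b\}$ has $w$ as its highest vertex, so Theorem~\ref{t2.9} yields $d(a,b)=l(w)=\diam w$. Since $w$ is the \emph{smallest} ball containing $B_1\cup B_2$, the union $B_1\cup B_2$ is not contained in any single part. If both $B_1$ and $B_2$ are proper subballs of $w$, they must then lie in distinct parts, say $B_1\subseteq X_i$ and $B_2\subseteq X_j$ with $i\ne j$; hence $d(a,b)=\diam w$ for all $a\in B_1$, $b\in B_2$, and both iterated suprema equal $\diam w$. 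Otherwise one of the balls, say $B_2$, equals $w$ while $B_1\subsetneq w$ lies in some part $X_i$; choosing any $b\in X_j$ with $j\ne i$ gives $d(a,b)=\diam w$ for every $a\in B_1$, so $\sup_{b\in B_2}\inf_{a\in B_1}d(a,b)=\diam w$. In either case $d_H(B_1,B_2)\ge\diam w$, and the lemma follows.

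The main obstacle is the lower bound together with the asymmetric $\sup$--$\inf$ shape of the Hausdorff distance: the argument succeeds only because, in an ultrametric space, the distance from a point of $X_j$ to \emph{every} point of a ball $B_1\subseteq X_i$ is constantly $\diam w$, so that the inner infimum over the whole ball $B_1$ still equals $\diam w$. I would take particular care with the degenerate nested case $B_1\subsetneq B_2=w$, where one of the two iterated suprema vanishes and the value in~\eqref{e3.28} must be supplied by the other; here one also uses $k\ge2$ to guarantee the existence of a part $X_j$ distinct from the part $X_i$ containing $B_1$.
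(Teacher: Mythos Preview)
The paper does not actually prove this lemma: it is quoted as ``a part of Theorem~2.5 from~\cite{Dov2019pNUAA}'' and no argument is supplied, so there is nothing in the present paper to compare your proof against line by line.

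Your argument is correct and self-contained. The identification of the least common ancestor $w$ with the smallest ball containing $B_1\cup B_2$ is justified by Proposition~\ref{p2.8} together with the nesting property of ultrametric balls, and the monotonicity of $l$ (Theorem~\ref{t1.7}) indeed forces $\max_{u\in V(P)}l(u)=l(w)$. The two-case split for the lower bound is handled properly: in the disjoint case $B_1\subseteq X_i$, $B_2\subseteq X_j$ with $i\ne j$, every cross-distance equals $\diam w$ by the definition of $G_{D,w}$ (or equivalently Theorem~\ref{t2.9}); in the nested case $B_1\subsetneq B_2=w$ you correctly observe that only one of the two iterated $\sup\inf$ terms in~\eqref{e3.28} can deliver the value, and you use $k\geqslant 2$ to produce a witness $b\in X_j$ with $j\ne i$. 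The point you flag as the main obstacle --- that the inner infimum over all of $B_1$ does not drop below $\diam w$ --- is exactly where ultrametricity is used, and you invoke it in the right place.

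In short: your proof is sound and is the natural direct argument; the paper simply defers to an external reference rather than giving any proof at all.
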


Lemma~\ref{l3.13} and Proposition~\ref{p3.5} imply that \((\mathbf{B}_{X}, d_H)\) is a finite ultrametric space for every finite ultrametric space \((X, d)\). Now we want to describe the structure of the representing tree \(T_{\mathbf{B}_{X}}\).

\begin{definition}\label{d3.14}
Let \(T_1\) and \(T_2\) be trees and let \(x\) be a leaf of \(T_2\). Suppose we have
\[
V(T_2) = V(T_1) \cup \{x\}, \quad x \notin V(T_1), \quad E(T_1) \subseteq E(T_2).
\]
Then there is a unique vertex \(u \in V(T_1)\), such that \(\{x, u\} \in E(T_2)\). In this case we say that \(T_2\) is obtained by \emph{adding the leaf} \(x\) \emph{to the vertex} \(u\).
\end{definition}

In the following lemma we consider an ultrametric space \((X, d)\) with \(X\) satisfying the condition
\begin{equation}\label{eq3.5}
\{Y\} \not\subseteq X
\end{equation}
for every \(Y \subseteq X\). We note that for every ultrametric space \((Z, \rho)\) there is an ultrametric space \((X, d)\) such that \((X, d) \simeq (Z, \rho)\) and \eqref{eq3.5} holds for every \(Y \subseteq X\).

\begin{lemma}\label{l3.15}
Let \((X, d)\) be a finite ultrametric space with the representing tree \(T_X = T_X(X, l)\), let \(\mathbf{B}_{X}\) be the ballean of \((X, d)\) and let \(d_H\) be the Hausdorff distance on \(\mathbf{B}_{X}\). Then the representing tree \(T_{\mathbf{B}_{X}}\) of the ultrametric space \((\mathbf{B}_{X}, d_H)\) and the labeled rooted tree \(T_1 = T_1(r_1, l_1)\), \(r_1 = X\), obtained from \(T_X\) by adding a leaf to every internal vertex of \(T_X\) and labeled such that 
\[
l_1(v_1) = \begin{cases}
l(v_1), & \text{if } v_1 \in V(T_X)\\
0, & \text{if } v_1 \notin V(T_X),
\end{cases}
\]
are isomorphic as labeled rooted trees.
\end{lemma}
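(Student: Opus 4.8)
The plan is to establish the isomorphism by exhibiting an explicit bijection between $V(T_{\mathbf{B}_X})$ and $V(T_1)$ that respects adjacency, the roots, and the labelings. By Proposition~\ref{p2.8}, the vertices of the representing tree $T_Y$ of any finite ultrametric space $(Y,\rho)$ are naturally identified with the balls of $(Y,\rho)$; here $Y=\mathbf{B}_X$ and a ball of $(\mathbf{B}_X,d_H)$ is a certain collection of balls of $(X,d)$. On the other side, $V(T_1)=V(T_X)\cup N$, where $N$ is the set of newly added leaves, one for each internal vertex of $T_X$, and by Proposition~\ref{p2.8} again $V(T_X)$ is identified with $\mathbf{B}_X$. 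The candidate map $\Phi\colon V(T_1)\to V(T_{\mathbf{B}_X})$ should send an original vertex $v\in V(T_X)$, corresponding to a ball $B\in\mathbf{B}_X$, to the vertex of $T_{\mathbf{B}_X}$ corresponding to the set of all balls of $(X,d)$ contained in $B$ — i.e. to $V(T_B)$ viewed inside $\mathbf{B}_X$ — and should send the added leaf attached at an internal vertex $v$ (corresponding to a non-singular ball $B$) to the singular ball $\{B\}\in\mathbf{B}_X$, which is a leaf of $T_{\mathbf{B}_X}$.

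First I would verify that $\Phi$ is well defined and bijective: the correspondence $B\mapsto\{B'\in\mathbf{B}_X\colon B'\subseteq B\}$ between $\mathbf{B}_X$ and certain subsets of $\mathbf{B}_X$, together with $v\mapsto\{B\}$ for internal $v$, exhausts all balls of $(\mathbf{B}_X,d_H)$. This uses the standard description of balls in a finite ultrametric space as the vertex sets $V(T_B)$ (Proposition~\ref{p2.8}) applied to $(\mathbf{B}_X,d_H)$, noting that a ball of $(\mathbf{B}_X,d_H)$ is itself of the form $\{B'\colon d_H(B',B)\le r\}$, and that by Lemma~\ref{l3.13} the condition $d_H(B',B)\le r$ translates into a containment-type condition on $B'$ relative to the path structure of $T_X$. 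Next I would check that $\Phi$ preserves adjacency: an edge of $T_X$ between a vertex $u$ and a direct successor $v$ corresponds, after passing to $\mathbf{B}_X$ and $T_{\mathbf{B}_X}$, to an edge of $T_{\mathbf{B}_X}$, because direct-successor relations in $T_X$ match the covering relations of balls, and these in turn match edges of $T_{\mathbf{B}_X}$; and each new edge $\{v,x\}$ of $T_1$ (with $x\in N$ attached at internal $v$, corresponding to $B$) maps to the edge of $T_{\mathbf{B}_X}$ joining the vertex for $\{B'\colon B'\subseteq B\}$ to the singular ball $\{B\}$, which is indeed an edge since $\{B\}$ is a maximal proper "sub-ball" of that vertex's ball in $(\mathbf{B}_X,d_H)$. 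Then $\Phi(r_1)=\Phi(X)$ must be the root of $T_{\mathbf{B}_X}$, i.e. all of $\mathbf{B}_X$, which holds since $X$ is the largest ball. Finally, the label condition: for $v_1\in V(T_X)$ corresponding to $B$, the label of the matching vertex in $T_{\mathbf{B}_X}$ is $\operatorname{diam}_{d_H}\{B'\colon B'\subseteq B\}$, and by Lemma~\ref{l3.13} this diameter equals $\max_{u\in V(T_B)}l(u)=l(v_1)$ since $l$ is non-increasing along edges away from $v_1$; for $x\in N$ the matching vertex is a singular ball, with label $0$, matching $l_1(x)=0$.

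The main obstacle I expect is the careful bookkeeping identifying balls of the \emph{iterated} ballean $(\mathbf{B}_X,d_H)$: one must show that every ball of $(\mathbf{B}_X,d_H)$ is either of the form $\{B'\in\mathbf{B}_X\colon B'\subseteq B\}$ for a unique $B\in\mathbf{B}_X$, or a singular ball $\{B\}$ with $B$ non-singular, and that these two families together with the containment/covering structure reproduce exactly the edge set of $T_X$ with one pendant leaf added at each internal node. This amounts to combining Lemma~\ref{l3.13} (which computes $d_H$ in terms of paths in $T_X$) with the construction of representing trees and with Proposition~\ref{p2.8}; the delicate point is that a singular ball $\{B\}$ of $(\mathbf{B}_X,d_H)$, when $B$ is non-singular, has $d_H(\{B\},B')>0$ for the direct-successor balls $B'$ of $B$, yet $\operatorname{diam}\{B'\colon B'\subseteq B\}=l(v_1)>0$ as well, so one must confirm that in $T_{\mathbf{B}_X}$ the vertex $\{B\}$ attaches precisely to the vertex for $\{B'\colon B'\subseteq B\}$ and not higher up — which follows because $d_H(\{B\},\{B''\})$ is strictly smaller for $B''\subsetneq B$ than for balls not contained in $B$. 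Once this local picture at each internal node of $T_X$ is nailed down, the global isomorphism, root-preservation, and label-matching are immediate.
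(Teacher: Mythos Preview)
The paper does not give its own proof of Lemma~\ref{l3.15}; it simply cites Theorem~3.10 of \cite{Dov2019pNUAA}. So there is no in-paper argument to compare against.

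Your approach is correct and is exactly the natural direct argument. Using Lemma~\ref{l3.13} to identify \((\mathbf{B}_X,d_H)\) with \((V(T_X),d_l)\), the key computation is that for distinct \(B,B'\in\mathbf{B}_X\) one has \(d_H(B,B')=l\bigl(\mathrm{LCA}(B,B')\bigr)\), whence the ball of radius \(l(B)\) about \(B\) in \((\mathbf{B}_X,d_H)\) is precisely \(\{B'\in\mathbf{B}_X:B'\subseteq B\}\), and any strictly smaller radius gives the singleton \(\{B\}\). This yields the complete list of balls of \((\mathbf{B}_X,d_H)\) and shows that the diametrical decomposition of \(\{B':B'\subseteq B\}\) has parts \(\{B\}\) and \(\{B':B'\subseteq B_i\}\) for the direct successors \(B_1,\ldots,B_k\) of \(B\) in \(T_X\), which is exactly the children structure of \(T_1\). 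The label and root checks are then immediate.

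One small point of care in your final paragraph: you write that ``\(d_H(\{B\},\{B''\})\) is strictly smaller for \(B''\subsetneq B\)'' as though these distances vary with \(B''\). They do not: for every \(B''\subsetneq B\) one has \(d_H(B,B'')=l(B)\) exactly, while for \(B''\not\subseteq B\) one has \(d_H(B,B'')=l(\mathrm{LCA}(B,B''))>l(B)\). This is still enough for what you need (namely that \(\{B\}\) is a child of \(\{B':B'\subseteq B\}\) in \(T_{\mathbf{B}_X}\)), but the phrasing suggests a strict inequality among the \(B''\subsetneq B\) that is not there; tightening this sentence would make the argument cleaner.
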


For the proof of this lemma see Theorem~3.10 in \cite{Dov2019pNUAA}.

Our next lemma is a direct consequence of Lemma~\ref{l3.15}.

\begin{lemma}\label{l3.16}
Let \((X_1, d_1)\) and \((X_2, d_2)\) be finite, nonempty ultrametric spaces. Then the equivalence
\[
(T_{X_1} \simeq T_{X_2}) \Leftrightarrow (T_{\mathbf{B}_{X_1}} \simeq T_{\mathbf{B}_{X_2}})
\]
is valid.
\end{lemma}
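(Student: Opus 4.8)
Lemma~\ref{l3.16} asserts that $T_{X_1} \simeq T_{X_2}$ if and only if $T_{\mathbf{B}_{X_1}} \simeq T_{\mathbf{B}_{X_2}}$, where the isomorphism on both sides is that of labeled rooted trees.

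The plan is to deduce this directly from Lemma~\ref{l3.15}, which gives an explicit combinatorial description of $T_{\mathbf{B}_{X}}$ in terms of $T_X$: namely, $T_{\mathbf{B}_{X}}$ is isomorphic, as a labeled rooted tree, to the tree $\widehat{T}_X$ obtained from $T_X$ by adding a pendant leaf (labeled $0$) to each internal vertex and keeping the old labels on the old vertices. So it suffices to prove the equivalence
\[
(T_{X_1} \simeq T_{X_2}) \Leftrightarrow (\widehat{T}_{X_1} \simeq \widehat{T}_{X_2}),
\]
since by Lemma~\ref{l3.15} the right-hand side here is equivalent to $T_{\mathbf{B}_{X_1}} \simeq T_{\mathbf{B}_{X_2}}$.

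First I would prove the forward implication. Given an isomorphism $f \colon V(T_{X_1}) \to V(T_{X_2})$ of labeled rooted trees, I extend it to $\widehat{f} \colon V(\widehat{T}_{X_1}) \to V(\widehat{T}_{X_2})$ as follows: on $V(T_{X_1})$ set $\widehat{f} = f$; since $f$ preserves adjacency, root, and labels, it carries internal vertices of $T_{X_1}$ bijectively onto internal vertices of $T_{X_2}$ (an internal vertex is exactly one with $\delta^+ \geq 2$, equivalently, by Theorem~\ref{t1.7}, one with positive label and positive out-degree, a property preserved by $f$). For each internal vertex $u$ of $T_{X_1}$, with attached new leaf $x_u$ and correspondingly $x_{f(u)}$ attached to $f(u)$ in $\widehat{T}_{X_2}$, set $\widehat{f}(x_u) = x_{f(u)}$. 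One checks $\widehat{f}$ is a bijection preserving the root $X_1 \mapsto X_2$, preserving adjacency (old edges map to old edges by $f$, new edges $\{u, x_u\}$ map to new edges $\{f(u), x_{f(u)}\}$, and no new/old edge can map to the wrong type since the new leaves are precisely the leaves adjacent to an internal vertex carrying a positive label), and preserving labels (old labels via $f$, new labels all equal to $0$). Hence $\widehat{T}_{X_1} \simeq \widehat{T}_{X_2}$.

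For the converse, suppose $g \colon V(\widehat{T}_{X_1}) \to V(\widehat{T}_{X_2})$ is an isomorphism of labeled rooted trees. I must show $g$ restricts to an isomorphism $T_{X_1} \to T_{X_2}$, for which the key point is that $g$ maps the set of ``added'' leaves of $\widehat{T}_{X_1}$ onto the set of added leaves of $\widehat{T}_{X_2}$. The added leaves of $\widehat{T}_X$ are characterized intrinsically inside $\widehat{T}_X$: they are exactly the leaves $x$ with label $0$ whose unique neighbour $u$ has a \emph{positive} label (the leaves of $T_X$ itself have label $0$ by condition \ref{t1.7:c1} of Theorem~\ref{t1.7}, but their neighbour in $\widehat{T}_X$ is an internal vertex of $T_X$, which also has positive label---so this characterization needs a sharper invariant). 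The sharper distinction: in $\widehat{T}_X$, a leaf of $T_X$ is attached to a vertex $u$ which is internal in $T_X$, hence has $\delta^+_{\widehat{T}_X}(u) \geq 3$ (its $T_X$-out-degree $\geq 2$ plus the added leaf), whereas each added leaf $x_u$ is attached to an internal vertex $u$ of $T_X$ \emph{only}, and is the unique added leaf at $u$; but a vertex may carry both an original leaf-child and the added leaf, so one distinguishes the added leaf at $u$ from the original leaf-children of $u$ by the parity/count of label-$0$ leaves at $u$... Rather than parity, the clean argument is: the added leaves are exactly those label-$0$ leaves $x$ such that the neighbour $u$ of $x$ satisfies: removing $x$ still leaves $u$ with out-degree $\geq 2$ in the remaining tree, AND $u$ retains at least one non-leaf child or two children after removal. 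Since all these conditions are expressible purely in the abstract labeled rooted tree $\widehat{T}_X$ and are preserved by $g$, the image under $g$ of the added leaves of $\widehat{T}_{X_1}$ is the set of added leaves of $\widehat{T}_{X_2}$. Consequently $g$ restricts to a bijection $V(T_{X_1}) \to V(T_{X_2})$; it preserves the root (the root is label-$0$-free or, if $|X|=1$, is the only vertex, in any case characterized intrinsically), preserves adjacency (edges of $T_{X_i}$ are exactly the edges of $\widehat{T}_{X_i}$ not incident to an added leaf), and preserves labels. Hence $T_{X_1} \simeq T_{X_2}$, and by Theorem~\ref{t2.7} this is also equivalent to $(X_1, d_1) \simeq (X_2, d_2)$.

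The main obstacle is the converse direction: one must pin down an intrinsic, isomorphism-invariant characterization inside the abstract labeled rooted tree $\widehat{T}_X$ of which leaves were the ``added'' ones, and this is delicate precisely at internal vertices $u$ of $T_X$ that already had leaf-children in $T_X$, since then $u$ acquires several label-$0$ pendant neighbours in $\widehat{T}_X$ and the added one is not distinguished by label alone. The resolution is to recall (condition \ref{t1.7:c1} of Theorem~\ref{t1.7}) that in $T_X$ every vertex with $\delta^+ = 1$ is forbidden, so an internal vertex of $T_X$ has at least two children; thus an internal vertex $u$ of $T_X$, viewed in $\widehat{T}_X$, has out-degree $\geq 3$, while the added leaf $x_u$ is distinguished among the label-$0$ children of $u$ by being the unique one whose removal does not disturb the ``$T_X$-part'' --- but since abstractly we cannot yet see the $T_X$-part, the genuinely correct invariant is obtained bottom-up: a leaf $x$ of $\widehat{T}_X$ with label $0$ is an added leaf if and only if its parent $u$ has at least one child that is \emph{not} a leaf, or has at least two leaf-children, one of which is $x$. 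I would make this precise by an induction on the height of $\widehat{T}_X$, or equivalently by invoking the uniqueness part of Theorem~\ref{t1.7}: $\widehat{T}_X$ satisfies a "near-monotone" condition that uniquely determines which subtree is the original $T_X$, so any abstract isomorphism must respect that decomposition.
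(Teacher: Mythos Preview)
Your overall strategy---reduce to Lemma~\ref{l3.15} and work with the ``hat'' tree $\widehat{T}_X$ obtained by attaching a pendant label-$0$ leaf to every internal vertex---is exactly what the paper intends (it declares the lemma a ``direct consequence of Lemma~\ref{l3.15}'' without further detail). Your forward implication is correct.

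The converse, however, has a genuine gap. You attempt to find an \emph{intrinsic} characterization of the added leaves inside $\widehat{T}_X$, so that an arbitrary isomorphism $g\colon \widehat{T}_{X_1}\to\widehat{T}_{X_2}$ would be forced to carry added leaves to added leaves and hence restrict to $T_{X_1}\to T_{X_2}$. No such characterization exists. Take $X=\{a,b\}$ with $d(a,b)=1$: then $T_X$ has a root labeled $1$ with two leaf children labeled $0$, and $\widehat{T}_X$ has the same root with \emph{three} leaf children labeled $0$. The added leaf is indistinguishable from the two original ones, and there are automorphisms of $\widehat{T}_X$ permuting all three. So the particular $g$ need not restrict, and your successive attempts to pin down the added leaf (via the neighbour's label, via out-degree~$\geq 3$, via ``removal leaves out-degree~$\geq 2$'', via a putative uniqueness in Theorem~\ref{t1.7}) cannot succeed.

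The fix is to stop trying to restrict $g$ and instead argue only up to isomorphism. Observe that in $\widehat{T}_X$ the vertices with positive label are exactly the internal vertices of $T_X$, and each of them has at least one label-$0$ leaf child (the added one). Deleting \emph{one} label-$0$ leaf child from every positive-label vertex recovers a tree isomorphic (as labeled rooted tree) to $T_X$, and the isomorphism type of the result does not depend on which leaf is deleted, since all label-$0$ leaf children of a given vertex are interchangeable. Hence $\widehat{T}_X$ determines $T_X$ up to isomorphism, and $\widehat{T}_{X_1}\simeq\widehat{T}_{X_2}$ gives $T_{X_1}\simeq T_{X_2}$. Equivalently, you may post-compose $g$ with an automorphism of $\widehat{T}_{X_2}$ that permutes the label-$0$ leaf children at each internal vertex so as to send added leaves to added leaves, and then restrict.
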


In the proof of Theorem~\ref{t3.12} we also use the next simple fact.

\begin{lemma}\label{l3.17}
If \(f \colon V(T_1) \to V(T_2)\) is an isomorphism of labeled trees \(T_1 = T_1(l_1)\) and \(T_2 = T_2(l_2)\), then the equality
\[
d_{l_1}(u_1, v_1) = d_{l_2} (f(u_1), f(v_1))
\]
holds for all \(u_1\), \(v_1 \in V(T_1)\).
\end{lemma}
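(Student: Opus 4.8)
The plan is to unwind the two definitions in play and observe that they match termwise. Let $f\colon V(T_1)\to V(T_2)$ be an isomorphism of the labeled trees $T_1(l_1)$ and $T_2(l_2)$ in the sense of Definition~\ref{d1.6}; thus $f$ is a bijection, condition~\eqref{d1.5:e1} holds, and $l_2(f(v))=l_1(v)$ for every $v\in V(T_1)$. Fix $u_1$, $v_1\in V(T_1)$. If $u_1=v_1$, then $f(u_1)=f(v_1)$ and both sides of the claimed equality are $0$ by~\eqref{e3.11}, so we may assume $u_1\neq v_1$; by injectivity $f(u_1)\neq f(v_1)$ as well.

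The key step is to transfer the unique path. Let $P=(x_0,x_1,\ldots,x_k)$ be the unique path joining $u_1=x_0$ and $v_1=x_k$ in $T_1$. I would first check that $f(P):=(f(x_0),f(x_1),\ldots,f(x_k))$ is a path in $T_2$ joining $f(u_1)$ and $f(v_1)$: the vertices $f(x_0),\ldots,f(x_k)$ are pairwise distinct because $f$ is injective and $x_0,\ldots,x_k$ are, and each pair $\{f(x_i),f(x_{i+1})\}$ is an edge of $T_2$ by~\eqref{d1.5:e1} since $\{x_i,x_{i+1}\}\in E(T_1)$. Since $T_2$ is a tree, the path joining $f(u_1)$ and $f(v_1)$ in $T_2$ is unique, hence equals $f(P)$, and in particular $V(f(P))=\{f(x_0),\ldots,f(x_k)\}=f(V(P))$.

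Now I combine this with the label-preservation. Using~\eqref{e3.11} for $d_{l_2}$, the fact that $f(P)$ is the path joining $f(u_1)$ and $f(v_1)$ in $T_2$, and then $l_2(f(x))=l_1(x)$:
\[
d_{l_2}(f(u_1),f(v_1))=\max_{y\in V(f(P))} l_2(y)=\max_{x\in V(P)} l_2(f(x))=\max_{x\in V(P)} l_1(x)=d_{l_1}(u_1,v_1),
\]
which is the assertion. There is no real obstacle here: the only point requiring a word of care is the verification that the image of the $T_1$-path under $f$ is genuinely the (unique) $T_2$-path between the image vertices, which rests on $f$ being a graph isomorphism plus the tree property of $T_2$; everything else is a direct substitution into~\eqref{e3.11}.
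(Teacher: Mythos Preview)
Your proof is correct and follows exactly the approach indicated by the paper, which simply says ``It follows from Definition~\ref{d1.6} and \eqref{e3.11}.'' You have spelled out the details of this one-line justification: that a labeled-tree isomorphism carries the unique path in $T_1$ to the unique path in $T_2$ and preserves labels, so the maxima in~\eqref{e3.11} coincide.
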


\begin{proof}
It follows from Definition~\ref{d1.6} and \eqref{e3.11}.
\end{proof}

\begin{proof}[Proof of Theorem~\ref{t3.12}]
\ref{t3.12:s1}. By Corollary~\ref{c3.6} the implication
\[
(T_1(l_1) \simeq T_2(l_2)) \Rightarrow ((V(T_1), d_{l_1}) \simeq (V(T_2), d_{l_2}))
\]
is valid. Suppose that \((V(T_1), d_{l_1})\) and \((V(T_2), d_{l_2})\) are isometric. Since \((V(T_i), d_{l_i})\) is ultrametric, Theorem~\ref{t2.7} implies that 
\[
((V(T_1), d_{l_1}) \simeq (V(T_2), d_{l_2})) \Leftrightarrow (T_{V(T_1)} \simeq T_{V(T_2)}),
\]
where \(T_{V(T_1)}\) and \(T_{V(T_2)}\) are the representing trees of \((V(T_1), d_{l_1})\) and \((V(T_2), d_{l_2})\), respectively. Thus, 
\begin{equation}\label{t3.12:e0}
T_{V(T_1)} \simeq T_{V(T_2)}
\end{equation}
is valid. Using Theorem~\ref{t1.7} we find finite ultrametric spaces \((X_1, d_1)\) and \((X_2, d_2)\) such that
\begin{equation}\label{t3.12:e1}
T_{X_1} \simeq T(r_1, l_1) \quad \text{and} \quad T_{X_2} \simeq T(r_2, l_2).
\end{equation}
Proposition~\ref{p2.8}, Lemma~\ref{l3.13} and \eqref{t3.12:e1} imply that
\begin{equation}\label{t3.12:e2}
(\mathbf{B}_{X_i}, d_{H_i}) \simeq (V(T_i), d_{l_i}), \quad i =1,2,
\end{equation}
where \(d_{H_1}\) and \(d_{H_2}\) are the Hausdorff distances generated by \(d_1\) and \(d_2\), respectively (see~\eqref{e3.28}). By Theorem~\ref{t2.7}, statement \eqref{t3.12:e2} implies
\begin{equation}\label{t3.12:e3}
\mathbf{B}_{X_i} \simeq T_{V(T_i)}.
\end{equation}
Now using \eqref{t3.12:e0} and \eqref{t3.12:e3} we have \(\mathbf{B}_{X_1} \simeq \mathbf{B}_{X_2}\). By Lemma~\ref{l3.16},
\begin{equation}\label{t3.12:e4}
T_{X_1} \simeq T_{X_2}
\end{equation}
is valid. From \eqref{t3.12:e1} and \eqref{t3.12:e4} it follows that 
\[
T(r_1, l_1) \simeq T(r_2, l_2),
\]
which implies \(T(l_1) \simeq T(l_2)\). Statement \ref{t3.12:s1} is proved.

\ref{t3.12:s2}. Let \((V(T_1), d_{l_1})\) and \((V(T_2), d_{l_2})\) be isometric. The implication \(\ref{t3.12:s2:s1} \Rightarrow \ref{t3.12:s2:s2}\) is trivially valid. Suppose \ref{t3.12:s2:s2} holds, but \ref{t3.12:s2:s1} is false. Then we have 
\begin{equation}\label{t3.12:e5}
|V(T_1)| = |V(T_2)| \geqslant 2.
\end{equation}
By condition of the theorem, the attitude
\begin{equation}\label{t3.12:e6}
\delta^{+}(r_1) \neq 1 \neq \delta^{+}(r_2)
\end{equation}
satisfied. Using \eqref{t3.12:e5} and \eqref{t3.12:e6} we obtain 
\begin{equation}\label{t3.12:e7}
\delta^{+}(r_1) \geqslant 2 \quad \text{and} \quad \delta^{+}(r_2) \geqslant 2.
\end{equation}
By statement \ref{t3.12:s1}, from \((V(T_1), d_{l_1}) \simeq (V(T_2), d_{l_2})\) it follows that there is an isomorphism \(f \colon V(T_1) \to V(T_2)\) of \(T_1(l_1)\) and \(T_2(l_2)\). Lemma~\ref{l3.17} implies that the mapping \(f\) is also an isometry of \((V(T_1), d_{l_1})\) and \((V(T_2), d_{l_2})\).

Let \(v_1^{*}\) be a leaf of \(T_1\) and let \(u_1^{*}\) be the unique vertex of \(T_1\) such that \(\{v_1^{*}, u_1^{*}\} \in E(T_1)\). Since \(f\) is an isomorphism of \(T_1(l_1)\) and \(T_2(l_2)\), the vertex \(v_2^{*} = f(v_1^{*})\) is a leaf of \(T_2\) and \(\{v_2^{*}, u_2^{*}\} \in E(T_2)\) holds with \(u_2^{*} = f(u_1^{*})\). Using \eqref{t3.12:f1} we also see that
\[
l_1(v_1^{*}) = l_2(v_2^{*}) = 0 \quad \text{and} \quad l_1(u_1^{*}) = l_2(u_2^{*}) > 0.
\]
Let us define the bijection \(g \colon V(T_2) \to V(T_2)\) as
\begin{equation}\label{t3.12:e9}
g(v_2) = \begin{cases}
v_2^{*}, & \text{if } v_2 = u_2^{*}\\
u_2^{*}, & \text{if } v_2 = v_2^{*}\\
v_2, & \text{otherwise}.
\end{cases}
\end{equation}
Then the composition
\begin{equation}\label{t3.12:e8}
V(T_1) \xrightarrow{f} V(T_2) \xrightarrow{g} V(T_2)
\end{equation}
is not an isomorphism \(T_1\) and \(T_2\) because \(v_1^{*}\) is a leaf of \(T_1\), but \(g(f(v_1^{*}))\) is an inner node of \(T_2\), \(g(f(v_1^{*})) = g(v_2^{*}) = u_2^{*}\). 

Since \(f\) is an isometry, mapping \eqref{t3.12:e8} is an isometry if and only if \(g\) is an isometry. Using \eqref{t3.12:e9} and Definition~\ref{d1.1} we can simply prove that \(g\) is an isometry if and only if
\begin{equation}\label{t3.12:e10}
d_{l_2} (x, u_2^{*}) = d_{l_2} (x, v_2^{*})
\end{equation}
holds whenever 
\begin{equation}\label{t3.12:e11}
u_2^{*} \neq x \neq v_2^{*}
\end{equation}
and \(x \in V(T_2)\).

Let \(x \in V(T_2)\) satisfy \eqref{t3.12:e11}. Let us consider the path \(P_1 = (v_1, \ldots, v_n)\) joining \(v_2^{*} = v_1\) and \(x = v_n\), and the path \(P_2 = (u_1, \ldots, u_m)\) joining \(u_2^{*} = u_1\) and \(x = u_m\). Since \(v_2^{*}\) is a leaf of \(T_2\), \(\{u_2^{*}, v_2^{*}\} \in E(T_2)\) (because \(\{u_1^{*}, v_1^{*}\} \in E(T_1)\) and \(f\) is an isomorphism of \(T_1\) and \(T_2\)), we obtain the equalities 
\[
m+1 = n \quad \text{and} \quad v_2 = u_1, \ v_3 = u_2, \ \ldots, \ v_n = u_{m-1} = u_m.
\]
Now equality~\eqref{t3.12:e10} follows from~\eqref{e3.11} and the equality \(l_2(v_2^{*}) = 0\).
\end{proof}

We conclude this section with a simple example of connected labeled graph \(G(l)\) for which the group of isomorphisms of \(G(l)\) coincides with the group isometries of \((V(G), \rho_{l})\).

\begin{example}
Let \(G = G(l)\) be the labeled graph shown in Figure~\ref{fig6}. Then every isometry \(f \colon V(G) \to V(G)\) from \((V(G), \rho_{l})\) to \((V(G), \rho_{l})\) is an isomorphism from \(G(l)\) to \(G(l)\).
\end{example}

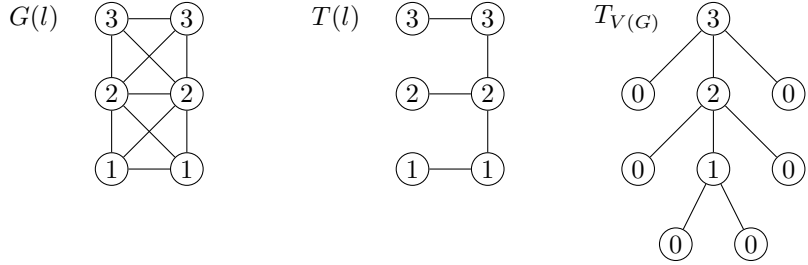
\begin{figure}[h]
\begin{center}
\begin{tikzpicture}[
node distance=1cm, on grid=true,
level 1/.style={level distance=1cm,sibling distance=1cm},
level 2/.style={level distance=1cm,sibling distance=1cm},
level 3/.style={level distance=1cm,sibling distance=1cm},
solid node/.style={circle,draw,inner sep=1.5,fill=black},
hollow node/.style={circle,draw,inner sep=1.5}
]

\node [hollow node, label={[label distance=10pt] left:{\(G(l)\)}}] (A) at (0,0) {\(3\)};
\node [hollow node] (B) [right=of A] {\(3\)};
\node [hollow node] (C) [below=of A] {\(2\)};
\node [hollow node] (D) [below=of B] {\(2\)};
\node [hollow node] (E) [below=of C] {\(1\)};
\node [hollow node] (F) [below=of D] {\(1\)};
\draw (A) -- (B) -- (D) -- (F) -- (E) -- (C) -- (A) -- (D) -- (E);
\draw (B) -- (C) -- (F);
\draw (C) -- (D);

\node [hollow node, label={[label distance=10pt] left:{\(T(l)\)}}] (A1) at (4,0) {\(3\)};
\node [hollow node] (B1) [right=of A1] {\(3\)};
\node [hollow node] (C1) [below=of A1] {\(2\)};
\node [hollow node] (D1) [below=of B1] {\(2\)};
\node [hollow node] (E1) [below=of C1] {\(1\)};
\node [hollow node] (F1) [below=of D1] {\(1\)};
\draw (A1) -- (B1) -- (D1) -- (F1) -- (E1);
\draw (C1) -- (D1);

\node [hollow node, label={[label distance=10pt] left:{\(T_{V(G)}\)}}] (A2) at (8,0) {\(3\)}
	child {node [hollow node]{\(0\)}}
	child {node [hollow node]{\(2\)}
		child {node [hollow node]{\(0\)}}
		child {node [hollow node]{\(1\)}
			child {node [hollow node]{\(0\)}}
			child {node [hollow node]{\(0\)}}
		}
		child {node [hollow node]{\(0\)}}
	}
	child {node [hollow node]{\(0\)}};
\end{tikzpicture}
\end{center}
\caption{Here \(T = T(l)\) is a spanning tree for \(G(l)\) satisfying the equality \(\rho_{l} = d_{l}\) and \(T_{V(G)}\) is the representing tree for the ultrametric space \((V(G), \rho_{l})\).}
\label{fig6}
\end{figure}

\section{Isomorphisms of monotone trees and of equidistant trees}\label{sec4}

Let $T = T(r)$ be a rooted tree. Write
\begin{equation}\label{e3.1}
V_0^{+}(T) := \{v \in V(T) \colon \delta^{+}(v) = 0\}.
\end{equation}
It is clear that \(V_0^{+}(T) \subseteq L(T)\) and \(V_0^{+}(T) = L(T)\) if and only if \(r \notin L(T)\) or \(V(T) = \{r\}\).

\begin{definition}\label{d3.1}
Let \(T = T(r, w)\) be a weighted rooted tree with strictly positive weight. The weight $w$ is \emph{equidistant} if there is a constant $K$ such that, for every $u \in V_{0}^{+}(T)$,
\begin{equation}\label{e6.1}
K = \sum_{i=1}^{n-1} w(\{v_i, v_{i+1}\}),
\end{equation}
where $(v_1, \ldots, v_n)$ is the path joining the root $r = v_1$ with the leaf $u = v_n$ in $T$. In this case we say that \(T(r, w)\) is \emph{equidistant}.
\end{definition}

Note that the root $r$ of an equidistant tree $T = T(r, w)$ can be a leaf of $T$ (see Figure~\ref{fig10}).

We will also say that a labeling $l \colon V(T) \to \mathbb{R}^{+}$ of a labeled rooted tree $T = T(r, l)$ is \emph{monotone} if
$$
l^{-1}(0) = V_0^{+}(T)
$$
and, in addition, the inequality
\begin{equation}\label{e3.3}
l(v) < l(u)
\end{equation}
holds whenever $v$ is a direct successor of $u$. 

A tree is \emph{monotone} if it is a labeled rooted tree with monotone labeling. By Theorem~\ref{t1.7}, the tree \(T_X\) is monotone for every finite ultrametric space \((X, d)\).

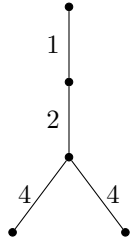
\begin{figure}[h]
\begin{tikzpicture}[scale=1]
\tikzstyle{level 1}=[level distance=10mm,sibling distance=10mm]
\tikzstyle{level 2}=[level distance=10mm,sibling distance=10mm]
\tikzstyle{level 3}=[level distance=10mm,sibling distance=15mm]
\tikzset{
solid node/.style={circle,draw,inner sep=1pt,fill=black},
hollow node/.style={circle,draw,inner sep=1.5}
}

\node [solid node] at (0,0) {}
	child {node [solid node]{}
		child {node [solid node]{}
			child {node [solid node]{} edge from parent node[left]{$4$}}
			child {node [solid node]{} edge from parent node[right]{$4$}}
			edge from parent node[left]{$2$}
		}
		edge from parent node[left]{$1$}
	};
\end{tikzpicture}
\caption{The rooted trees in which the root has degree $1$ are known as planted trees. It is an example of planted, equidistant tree satisfying equality~\eqref{e6.1} with $K = 7$.}
\label{fig10}
\end{figure}

The following proposition gives us a tool for investigation of a duality between the equidistant trees and monotone trees.

\begin{proposition}\label{p6.1}
Let \(T = T(r)\) be a nonempty rooted tree. Then the following statements hold:
\begin{enumerate}
\item\label{p6.1:s1} For every monotone labeling \(l \colon V(T) \to \mathbb{R}^+\) there is the unique equidistant weight \(w \colon E(T) \to \mathbb{R}^+\) such that
\begin{equation}\label{p6.1e1}
w(\{u, v\}) = \frac{1}{2}(l(u) - l(v))
\end{equation}
holds whenever \(v\) is a direct successor of \(u\).
\item\label{p6.1:s2} For every equidistant weight \(w \colon E(T) \to \mathbb{R}^+\) there is the unique monotone labeling \(l \colon V(T) \to \mathbb{R}^+\) such that \eqref{p6.1e1} holds whenever \(v\) is a direct successor of \(u\).
\end{enumerate}
\end{proposition}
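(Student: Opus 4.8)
The plan is to treat the two statements in parallel, since each asserts that a certain explicit correspondence is well-defined and bijective. For statement \ref{p6.1:s1}, I would start with a monotone labeling $l$ and \emph{define} $w$ by the formula $w(\{u,v\}) = \tfrac12(l(u)-l(v))$ whenever $v$ is a direct successor of $u$; this is forced, so uniqueness is immediate once we know such a $w$ exists. The first thing to check is that this $w$ is a legitimate strictly positive weight: for every edge $\{u,v\}$ of $T$, exactly one endpoint is a direct successor of the other (rooted tree), so the formula assigns a well-defined value, and monotonicity \eqref{e3.3} gives $l(u) > l(v)$, hence $w(\{u,v\}) > 0$. Then I would verify that $w$ is equidistant: take $u \in V_0^{+}(T)$ and let $(v_1, \ldots, v_n)$ be the path from $r = v_1$ to $u = v_n$; each $v_{i+1}$ is a direct successor of $v_i$, so the sum in \eqref{e6.1} telescopes,
\[
\sum_{i=1}^{n-1} w(\{v_i, v_{i+1}\}) = \frac12 \sum_{i=1}^{n-1}\bigl(l(v_i) - l(v_{i+1})\bigr) = \frac12\bigl(l(r) - l(u)\bigr) = \frac12\, l(r),
\]
using $l(u) = 0$ because $u \in V_0^{+}(T) = l^{-1}(0)$. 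Since $l(r)$ does not depend on the chosen leaf $u$, the constant $K = \tfrac12 l(r)$ works, so $w$ is equidistant.

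For statement \ref{p6.1:s2}, I would go the other way: given an equidistant weight $w$ with constant $K$, \emph{define} a candidate labeling by $l(v) = 2\bigl(K - D(v)\bigr)$, where $D(v)$ denotes the sum of the weights along the path from $r$ to $v$ (with $D(r) = 0$). One should check $l$ is nonnegative — this follows because $D(v) \leqslant K$, since any vertex $v$ lies on the path from $r$ to some leaf $u \in V_0^{+}(T)$ and the weights are strictly positive, so $D(v) \leqslant D(u) = K$. Next, if $v$ is a direct successor of $u$, then $D(v) = D(u) + w(\{u,v\})$, so $l(u) - l(v) = 2\,w(\{u,v\})$, which is exactly \eqref{p6.1e1}; strict positivity of $w$ then forces $l(v) < l(u)$, giving \eqref{e3.3}. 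It remains to confirm $l^{-1}(0) = V_0^{+}(T)$: if $v \in V_0^{+}(T)$ then $D(v) = K$ (equidistance), so $l(v) = 0$; conversely if $l(v) = 0$ then $D(v) = K$, and if $v$ had a direct successor $v'$ we would get $D(v') = K + w(\{v,v'\}) > K$, contradicting $D(v') \leqslant K$, so $v \in V_0^{+}(T)$. Thus $l$ is monotone. Uniqueness follows because \eqref{p6.1e1} together with $l(r)$ determines all values of $l$ down the tree, and $l(r)$ is pinned down by the requirement $l(u) = 0$ at leaves $u \in V_0^{+}(T)$, forcing $l(r) = 2K$.

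I do not expect a serious obstacle here; the argument is essentially the telescoping identity plus bookkeeping about which endpoint of an edge is the successor. The one point that needs genuine care, rather than routine verification, is the interplay between the two \emph{defining} conditions in each direction — namely that in \ref{p6.1:s1} equidistance of $w$ is not assumed but must be \emph{derived} from monotonicity of $l$ (the telescoping above), and symmetrically in \ref{p6.1:s2} the condition $l^{-1}(0) = V_0^{+}(T)$ is not assumed but derived from equidistance of $w$. Making sure that \eqref{e6.1} is used with the \emph{same} constant $K$ for all leaves in $V_0^{+}(T)$, and that this is precisely what lets $l(r)$ (resp. $K$) be well-defined, is the conceptual heart of the proposition. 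I would also note in passing, for use later in the paper, that the two constructions are mutually inverse, with $K = \tfrac12 l(r)$; this is an immediate consequence of the formulas but worth recording.
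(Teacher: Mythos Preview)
Your proposal is correct and follows essentially the same route as the paper: define $w$ (resp.\ $l$) by the explicit formula and verify equidistance (resp.\ monotonicity) via the telescoping identity, with $K = \tfrac{1}{2}l(r)$. The only minor difference is in the uniqueness argument for \ref{p6.1:s2}: the paper argues by contradiction (choosing a vertex $u^{*}$ where two candidate monotone labelings disagree but agree on all successors, then deriving $l_1(u^{*}) = l(u^{*})$ from \eqref{p6.1e1}), whereas you observe directly that \eqref{p6.1e1} together with the leaf condition forces $l(r) = 2K$ and hence determines $l$ down the tree; both are equally valid, and your treatment of the inclusion $l^{-1}(0) \subseteq V_0^{+}(T)$ is in fact more explicit than the paper's.
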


\begin{proof}
\ref{p6.1:s1} Let $l \colon V(T) \to \mathbb{R}^+$ be monotone. Since for every \(\{u, v\} \in E(T)\) either \(u\) is a direct successor of \(v\) or \(v\) is a direct successor of \(u\), there is the unique weight $w \colon E(T) \to \mathbb{R}^+$ for which~\eqref{p6.1e1} holds whenever $v$ is a direct successor of $u$. We must prove that $w$ is equidistant.

Equality~\eqref{p6.1e1} implies that $w(e) > 0$ for every $e \in E(T)$, i.e., \(w\) is strictly positive. Let $v \in V_0^{+}(T)$ and let $(v_1, \ldots, v_n)$ be a path in $T$ such that $v_1 = r$ and $v_n = v$. Then using~\eqref{p6.1e1} and the equality $l(v) = 0$ we obtain
\begin{multline*}
\sum_{i=1}^{n-1} w(\{v_i, v_{i+1}\}) = \sum_{i=1}^{n-1} \frac{1}{2} (l(v_i) - l(v_{i+1})) \\
= \frac{1}{2} (l(v_1) - l(v_n)) = \frac{1}{2} (l(r) - l(v)) = \frac{1}{2} l(r).
\end{multline*}
Hence~\eqref{e6.1} holds with $K = \frac{1}{2}l(r)$. Thus $w$ is equidistant.

\ref{p6.1:s2} Let $w \colon E(T) \to \mathbb{R}^{+}$ be equidistant. For every \(u \in V(T)\) we define \(l(u)\) by
\begin{equation}\label{p6.1:e2}
2l(u) = \begin{cases}
K, & \text{if $u = r$}\\
K - \sum_{j=1}^{n-1} w(\{u_j, u_{j+1}\}), & \text{if $u \neq r$,}
\end{cases}
\end{equation}
where $u_1 = r$, $u_n = u$ and $(u_1, \ldots, u_n)$ is the unique path joining the root $r$ and the node $u$ in $T$ and $K$ is the constant defined by~\eqref{e6.1}. Then \eqref{e6.1} and \eqref{p6.1:e2} imply that \(l(v_0) = 0\) holds for every \(v_0 \in V_0^{+}(T)\). Moreover, if \(v\) is a direct successor of \(u\), then using \eqref{p6.1:e2} we obtain
\[
2l(v) = 2l(u) - w(\{u, v\}),
\]
which implies \eqref{p6.1e1}.

Suppose that \(l_1 \colon V(T) \to \mathbb{R}^{+}\) is a monotone labeling such that \(l_1 \neq l\) but
\begin{equation}\label{p6.1:e3}
w(\{u, v\}) = \frac{1}{2} (l_1(u) - l_1(v))
\end{equation}
holds whenever \(v\) is a direct successor of \(u\). Since \(l_1\) and \(l\) are monotone, we have \(l_1(v) = l(v)\) for every \(v \in V_0^{+}(T)\). Hence, there is 
\begin{equation}\label{p6.1:e4}
u^{*} \in V(T) \setminus V_0^{+}(T)
\end{equation}
such that \(l_1(u^{*}) \neq l(u^{*})\) but \(l_1(v) = l(v)\) for every successor of \(u^{*}\). Condition~\eqref{p6.1:e4} implies that the set of all successors of \(u^{*}\) is nonempty. Let \(v^{*}\) be a direct successor of \(u^{*}\). Then using \eqref{p6.1e1} and \eqref{p6.1:e3} we obtain
\begin{align*}
\frac{1}{2} (l_1(u^{*}) - l_1(v^{*})) &= w(\{u^{*}, v^{*}\}) = \frac{1}{2} (l(u^{*}) - l(v^{*})),\\
l_1(u^{*}) - l_1(v^{*}) &=l(u^{*}) - l(v^{*}),\\
l_1(u^{*}) &=l(u^{*}),
\end{align*}
contrary to \(l_1(u^{*}) \neq l(u^{*})\). Statement \ref{p6.1:s2} is proved.
\end{proof}

Figure~\ref{fig11} gives us an example of equidistant tree and the corresponding monotone tree.

\begin{figure}[h]
\begin{center}
\begin{tikzpicture}
\tikzstyle{level 1}=[level distance=15mm,sibling distance=25mm]
\tikzstyle{level 2}=[level distance=15mm,sibling distance=12mm]
\tikzstyle{level 3}=[level distance=15mm,sibling distance=6mm]
\tikzset{
solid node/.style={circle,draw,inner sep=1.5,fill=black},
hollow node/.style={circle,draw,inner sep=1.5}
}
\node (0) [solid node, label=left:{\(T(r,w)\)}] at (0,0) {}
child [sibling distance=15mm] {node(1) [solid node]{} edge from parent node[left]{$5$}}
child{node(1) [solid node]{}
	child{node[solid node]{}
		child{node[solid node]{} edge from parent node[left]{$2$}}
		child{node[solid node]{} edge from parent node[right]{$2$}}
		edge from parent node[left]{$2$}
		}
	child{node[solid node]{}
		child{node[solid node]{} edge from parent node[left]{$2$}}
		child{node[solid node]{} edge from parent node[right]{$2$}}
		edge from parent node[left]{$2$}
		}
	child [sibling distance=6mm] {node[solid node]{} edge from parent node[left]{$4$}}
	child [sibling distance=6mm] {node[solid node]{} edge from parent node[right]{$4$}}
	edge from parent node[left,xshift=3]{$1$}
}
child{node[solid node]{}
child{node[solid node]{} edge from parent node[left]{$3$}}
child{node[solid node]{} edge from parent node[right]{$3$}}
edge from parent node[right,xshift=3]{$2$}
};

\tikzset{
solid node/.style={circle,draw,inner sep=1.5},
hollow node/.style={circle,draw,inner sep=1.5}
}
\node(0)[solid node, label=left:{\(T(r,l)\)}] at (6,0) {$10$}
child [sibling distance=15mm] {node(1) [solid node]{$0$}}
child {node(1) [solid node]{$8$}
	child{node[solid node]{$4$}
		child{node[solid node]{$0$}}
		child{node[solid node]{$0$}}
	}
	child{node[solid node]{$4$}
		child{node[solid node]{$0$}}
		child{node[solid node]{$0$}}
	}
	child [sibling distance=6mm] {node[solid node]{$0$}}
	child [sibling distance=6mm] {node[solid node]{$0$}}
}
child{node[solid node]{$6$}
	child{node[solid node]{$0$}}
	child{node[solid node]{$0$}}
};
\end{tikzpicture}
\end{center}
\caption{The weight \(w\) and labeling \(l\) satisfy \eqref{p6.1e1} whenever \(v\) is a direct successor of \(u\).}
\label{fig11}
\end{figure}
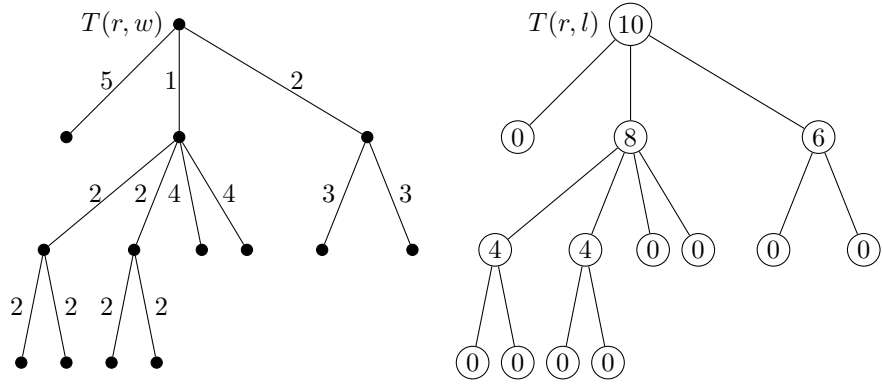

In what follows we write $\hat{l}*w$ and \(\hat{w}*l\) for the monotone labeling and, respectively, for equidistant weight obtained from the equidistant $w \colon E(T) \to \mathbb{R}^{+}$ and, respectively, from the monotone $l \colon V(T) \to \mathbb{R}^{+}$ by~\eqref{p6.1e1}.

\begin{proposition}\label{p3.3}
Let $T_1 = T_1(r_1)$ and \(T_2 = T_2(r_2)\) be nonempty rooted trees. Then the following statements hold:
\begin{enumerate}
\item\label{p3.3:s1} $(T_1(r_1, \hat{l}*w_1) \simeq T_2(r_2, \hat{l}*w_2)) \Leftrightarrow (T_1(r_1, w_1) \simeq T_2(r_2, w_2))$\\
is valid for all equidistant weights $w_1 \colon E(T_1) \to \mathbb{R}^{+}$ and $w_2 \colon E(T_2) \to \mathbb{R}^{+}$.
\item\label{p3.3:s2} $(T_1(r_1, \hat{w}*l_1) \simeq T_2(r_2, \hat{w}*l_2)) \Leftrightarrow (T_1(r_1, l_1) \simeq T_2(r_2, l_2))$\\
is valid for all monotone labelings $l_1 \colon V(T_1) \to \mathbb{R}^{+}$ and $l_2 \colon V(T_2) \to \mathbb{R}^{+}$.
\item\label{p3.3:s3} The equalities
\begin{equation}\label{p3.3:e1}
\hat{w}*(\hat{l}*w_1) = w_1 \quad \text{and}\quad \hat{l}*(\hat{w}*l_1) = l_1
\end{equation}
are satisfied for every equidistant weight $w_{1} \colon E(T_{1}) \to \mathbb{R}^{+}$ and every monotone labeling $l_{1} \colon V(T_{1}) \to \mathbb{R}^{+}$.
\end{enumerate}
\end{proposition}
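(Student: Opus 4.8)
The plan is to prove \ref{p3.3:s3} first, then \ref{p3.3:s1}, and finally to obtain \ref{p3.3:s2} as a formal consequence of the two. The whole argument will rest on the uniqueness clauses of Proposition~\ref{p6.1} together with one elementary remark: if $f\colon V(T_1)\to V(T_2)$ is an isomorphism of the rooted trees $T_1(r_1)$ and $T_2(r_2)$, then $f$ preserves out-degrees, $\delta^{+}(v)=\delta^{+}(f(v))$ for every $v$ (since $f$ preserves degrees and sends $r_1$ to $r_2$), and $f$ preserves the direct-successor relation, i.e.\ $v$ is a direct successor of $u$ in $T_1$ if and only if $f(v)$ is a direct successor of $f(u)$ in $T_2$ (because $f$ maps the path joining $v$ and $r_1$ onto the path joining $f(v)$ and $r_2$). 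In particular $f$ maps $V_0^{+}(T_1)$ bijectively onto $V_0^{+}(T_2)$. Recall also that, by Definition~\ref{d2.8}, an isomorphism of weighted (resp.\ labeled) rooted trees is in particular an isomorphism of the underlying rooted trees, so the remark applies in all cases below.

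For \ref{p3.3:s3}: fix an equidistant weight $w_1\colon E(T_1)\to\mathbb{R}^{+}$ and set $l:=\hat{l}*w_1$. By statement \ref{p6.1:s2} of Proposition~\ref{p6.1}, $l$ is a monotone labeling and the identity $w_1(\{u,v\})=\frac{1}{2}(l(u)-l(v))$ holds whenever $v$ is a direct successor of $u$. Now $\hat{w}*l$ is, by statement \ref{p6.1:s1} of Proposition~\ref{p6.1}, the \emph{unique} equidistant weight satisfying this same identity with the labeling $l$; since $w_1$ already satisfies it, we conclude $\hat{w}*l=w_1$, that is $\hat{w}*(\hat{l}*w_1)=w_1$. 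The second equality in \eqref{p3.3:e1} follows by the same reasoning with the roles of labelings and weights exchanged, using the uniqueness in statement \ref{p6.1:s2} of Proposition~\ref{p6.1}.

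For \ref{p3.3:s1}: write $l_i:=\hat{l}*w_i$, $i=1,2$. Assume first that $f$ is an isomorphism of the weighted rooted trees $T_1(r_1,w_1)$ and $T_2(r_2,w_2)$, and define $l'(v):=l_2(f(v))$ for $v\in V(T_1)$. Using the remark above and the fact that $l_2$ is monotone, one checks directly that $l'$ is a monotone labeling of $T_1$; moreover, for $v$ a direct successor of $u$ one has
\[
\frac{1}{2}\bigl(l'(u)-l'(v)\bigr)=\frac{1}{2}\bigl(l_2(f(u))-l_2(f(v))\bigr)=w_2(\{f(u),f(v)\})=w_1(\{u,v\}),
\]
where the middle equality is \eqref{p6.1e1} applied to $l_2$ and $w_2$ (since $f(v)$ is a direct successor of $f(u)$), and the last one uses that $f$ preserves weights. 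By the uniqueness in statement \ref{p6.1:s2} of Proposition~\ref{p6.1}, $l'=\hat{l}*w_1=l_1$, i.e.\ $l_1(v)=l_2(f(v))$ for all $v\in V(T_1)$, so $f$ is an isomorphism of the labeled rooted trees $T_1(r_1,l_1)$ and $T_2(r_2,l_2)$. Conversely, if $f$ is an isomorphism of $T_1(r_1,l_1)$ and $T_2(r_2,l_2)$, then for every $\{u,v\}\in E(T_1)$ — say with $v$ a direct successor of $u$ — formula \eqref{p6.1e1} gives $w_1(\{u,v\})=\frac{1}{2}(l_1(u)-l_1(v))=\frac{1}{2}(l_2(f(u))-l_2(f(v)))=w_2(\{f(u),f(v)\})$, so $f$ is an isomorphism of the weighted rooted trees. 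This establishes \ref{p3.3:s1}.

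Finally, \ref{p3.3:s2} is obtained by applying \ref{p3.3:s1} to the equidistant weights $w_i:=\hat{w}*l_i$ and invoking the identities $\hat{l}*w_i=\hat{l}*(\hat{w}*l_i)=l_i$ from \ref{p3.3:s3}. I do not anticipate a genuine obstacle here: the only points requiring care are the verification that $l'$ above is really a monotone labeling (this is exactly where the preservation of $\delta^{+}$ and of the direct-successor relation enters) and the discipline of quoting the uniqueness assertions of Proposition~\ref{p6.1} rather than re-deriving the explicit formulas \eqref{p6.1e1} and \eqref{p6.1:e2}.
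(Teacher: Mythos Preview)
Your proof is correct and follows essentially the same approach as the paper: both arguments rest entirely on the uniqueness clauses of Proposition~\ref{p6.1} applied to formula~\eqref{p6.1e1}. The paper disposes of \ref{p3.3:s1} and \ref{p3.3:s2} in one sentence (``follows directly from Proposition~\ref{p6.1} and the definitions'') and then proves \ref{p3.3:s3}, whereas you prove \ref{p3.3:s3} first, spell out \ref{p3.3:s1} in detail, and then derive \ref{p3.3:s2} from \ref{p3.3:s1} and \ref{p3.3:s3}; this reorganization is harmless and your explicit treatment of \ref{p3.3:s1} (including the remark that rooted-tree isomorphisms preserve the direct-successor relation and $V_0^{+}$) is exactly what the paper's one-line claim is hiding.
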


\begin{proof}
Statements \ref{p3.3:s1} and \ref{p3.3:s2} follows directly Proposition~\ref{p6.1} and the definitions of isomorphic weighted graphs and of isomorphic labeled graphs.

\ref{p3.3:s3} Let \(w_1 \colon E(T_1) \to \mathbb{R}^{+}\) be an equidistant weight. By statement \ref{p6.1:s1} of Proposition~\ref{p6.1}, \(l = \hat{l}*w_1\) is the unique monotone labeling satisfying
\begin{equation}\label{p3.3:e2}
\frac{1}{2} (\hat{l}*w_1(u) - \hat{l}*w_1(v)) = w_1(\{u, w\})
\end{equation}
whenever \(v\) is a direct successor of \(u\). By statement \ref{p6.1:s2} of Proposition~\ref{p6.1}, \(\hat{w}*(\hat{l}* w_1)\) is the unique equidistant weight satisfying
\begin{equation}\label{p3.3:e3}
\frac{1}{2} (\hat{l}*w_1(u) - \hat{l}*w_1(v)) = \hat{w}*(\hat{l}* w_1)(\{u, w\})
\end{equation}
whenever \(v\) is a direct successor of \(u\). Equalities \eqref{p3.3:e2} and \eqref{p3.3:e3} imply the first equality in~\eqref{p3.3:e1}. The second one can be proved similarly.
\end{proof}

The constant \(K\) in Definition~\ref{d3.1} of equidistant trees has a simple geometric interpretation. It is the distance between the root \(r\) and arbitrary \(v_0 \in V_0^{+}(T)\) in the metric space \((V(T), d_{w})\), where \(d_{w}\) is the additive metric generated by equidistant weight \(w\). 

Analogously, if \(l = \hat{l} * w\), then, for every \(v \in V(T)\), the value \(\frac{1}{2} l(v)\) is the distance in \((V(T), d_{w})\) between \(v\) and arbitrary \(v_0 \in V_0^{+}(T_v)\), where \(T_v\) is the rooted subtree of \(T(r, w)\) lying below \(v\) (see~\eqref{e2.5}, \eqref{e2.6}).

\begin{remark}\label{r4.5}
Let \(C\) be an arbitrary strictly positive real number. Proposition~\ref{p6.1} remains valid if we replace formula \eqref{p6.1e1} with the formula 
\[
w(\{u, w\}) = C(l(u) - l(v)),
\]
but the following two lemmas are valid only in the case \(C = \frac{1}{2}\).
\end{remark}

\begin{figure}[ht]
\begin{center}
\begin{tikzpicture}[%
level 1/.style={level distance=1cm,sibling distance=4cm},
level 2/.style={level distance=1cm,sibling distance=3cm},
level 3/.style={level distance=1cm,sibling distance=2.5cm},
level 4/.style={level distance=1cm,sibling distance=1cm},
level 5/.style={level distance=1cm,sibling distance=1cm},
solid node/.style={circle,draw,inner sep=1.5,fill=black},
hollow node/.style={circle,draw,inner sep=1.5},
edge from parent/.style = {draw, postaction={decorate}, decoration={markings,mark=at position .55 with {\arrow[draw, line width=1pt]{<}}}}
]
\node (root1) [solid node, label=above:{\(x_1\)}] at (0,0) {}
	child{node[solid node, label=left:{\(x_2\)}]{}
		child{node[solid node, label=left:{\(x_4\)}]{}
			child{node[solid node, label=left:{\(x_5\)}]{}
				child[level distance=2cm] {node[solid node, label=below:{\(x_8\)}]{}}
				child[level distance=2cm] {node[solid node, label=below:{\(x_9\)}]{}}
			}
			child{node[solid node, label=right:{\(x_6\)}]{}
				child{node[solid node, label=right:{\(x_7\)}]{}
					child{node[solid node, label=below:{\(x_{10}\)}]{}}
					child{node[solid node, label=below:{\(x_{11}\)}]{}}
					child{node[solid node, label=below:{\(x_{12}\)}]{}}
				}
			}
		}
	}
	child [level distance=5cm] {node[solid node, label=below:{\(x_3\)}]{}};
\node (title1) [right=of root1, label=right:{\(T\)}] {};
\end{tikzpicture}
\end{center}
\caption{Let \(x\) be a root of \(T\), \(u_1 = x_8\) and \(u_2 = x_{12}\). Then \(P = (u_1, v_1, v_2, v_3, v_4, u_2) = (x_8, x_5, x_4, x_6, x_7, x_{12})\) and \(v_{i^*} = x_4\). Vertex \(x_4\) is the first common vertex of two paths which join \(x_8\) and \(x_{12}\) with the root \(r = x_1\).}
\label{fig4}
\end{figure}
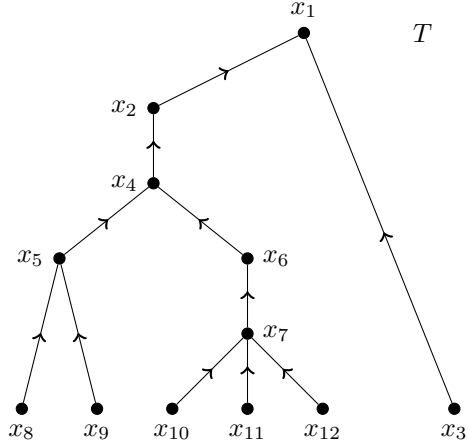

\begin{lemma}\label{l6.4}
Let $T = T(r, w)$ be an equidistant tree, let $l = \hat{l}*w$ be the corresponding monotone labeling, let $u_1$, $u_2$ be two different points of the set $V_{0}^{+}(T)$ and $P$ be the path joining $u_1$ and $u_2$ in $T$. Then
\begin{equation}\label{l6.4e1}
d_{w} (u_1, u_2) = \max_{v \in V(P)} l(v)
\end{equation}
holds.
\end{lemma}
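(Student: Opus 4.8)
The plan is to reduce the statement to a telescoping computation once the position of the path $P$ relative to the root $r$ is understood. First I would introduce the paths $Q_1$ and $Q_2$ joining $r$ with $u_1$ and with $u_2$, and let $v^{*}$ be the first common vertex of $Q_1$ and $Q_2$ when they are traversed starting from $u_1$ and $u_2$ (equivalently, the vertex of $V(P)$ closest to $r$, i.e.\ the least common ancestor of $u_1$ and $u_2$; this is the vertex $v_{i^{*}}$ in Figure~\ref{fig4}). A standard argument using the uniqueness of paths in a tree shows that $P$ is the concatenation of an ascending segment $P_1 = (u_1 = a_0, a_1, \ldots, a_p = v^{*})$, in which each $a_i$ is a direct successor of $a_{i+1}$, with a descending segment $P_2 = (v^{*} = b_0, b_1, \ldots, b_q = u_2)$, in which each $b_{j+1}$ is a direct successor of $b_j$. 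Here $v^{*} \notin \{u_1, u_2\}$: if, say, $v^{*} = u_1$, then $u_1$ would lie on $Q_2$ and hence be a proper ancestor of $u_2 \neq u_1$, so $\delta^{+}(u_1) \geqslant 1$, contradicting $u_1 \in V_{0}^{+}(T)$; in particular $p \geqslant 1$ and $q \geqslant 1$.

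Next I would evaluate $\max_{v \in V(P)} l(v)$. Since $l = \hat{l}*w$ is a monotone labeling, the inequality \eqref{e3.3} applied along $P_1$ gives $l(a_0) < l(a_1) < \cdots < l(a_p) = l(v^{*})$, and applied along $P_2$ gives $l(v^{*}) = l(b_0) > l(b_1) > \cdots > l(b_q)$. Hence the maximum of $l$ over $V(P) = V(P_1) \cup V(P_2)$ is attained at $v^{*}$, so it remains to prove that $d_{w}(u_1, u_2) = l(v^{*})$.

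For the distance, I would use formula \eqref{p6.1e1}, which holds for the pair $(w, l)$ because $l = \hat{l}*w$. Since $v^{*} \in V(P)$ splits the unique $u_1$--$u_2$ path into $P_1$ and $P_2$, we have $d_{w}(u_1, u_2) = d_{w}(u_1, v^{*}) + d_{w}(v^{*}, u_2)$ by \eqref{e6.7}. On $P_1$ every edge satisfies $w(\{a_i, a_{i+1}\}) = \frac{1}{2}\bigl(l(a_{i+1}) - l(a_i)\bigr)$, so the sum of the weights of $P_1$ telescopes to $\frac{1}{2}\bigl(l(v^{*}) - l(u_1)\bigr) = \frac{1}{2} l(v^{*})$, using $l(u_1) = 0$ (which holds since $u_1 \in V_{0}^{+}(T) = l^{-1}(0)$). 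The descending segment $P_2$ is treated identically and contributes another $\frac{1}{2} l(v^{*})$. Adding the two contributions yields $d_{w}(u_1, u_2) = l(v^{*}) = \max_{v \in V(P)} l(v)$, which is \eqref{l6.4e1}.

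The only genuinely delicate point is the first paragraph: the verification that $P$ decomposes as an ascending segment followed by a descending one with a well-defined topmost vertex $v^{*}$, together with the observation that $v^{*} \notin \{u_1, u_2\}$. This is elementary tree combinatorics, but it is exactly what makes the two telescoping sums and the monotonicity comparison line up; once it is established, the remainder is a direct computation from \eqref{p6.1e1} and \eqref{e3.3}.
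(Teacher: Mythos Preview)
Your proof is correct and follows essentially the same approach as the paper: identify the topmost vertex $v^{*}=v_{i^{*}}$ on the path $P$, use the ascending--descending structure of $P$ relative to the root together with~\eqref{p6.1e1} to telescope $d_w(u_1,u_2)$ to $l(v^{*})$, and invoke monotonicity of $l$ and $l(u_1)=l(u_2)=0$ to identify $l(v^{*})$ with the maximum. The paper's version is slightly terser (it handles both segments in a single displayed sum and refers to Figure~\ref{fig4} for the existence of $i^{*}$), but there is no substantive difference.
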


\begin{proof}
Let $P = (v_0, v_1, \ldots, v_n, v_{n+1})$ and \(v_0 = u_1\), \(v_{n+1} = u_2\) hold. It can be shown (See Figure~\ref{fig4}) that there is $i^{*} \in \{1, \ldots, n\}$ such that:
\begin{itemize}
\item $v_i$ is a direct successor of $v_{i+1}$ if $i+1 \leqslant i^{*}$;
\item $v_{i+1}$ is a direct successor of $v_i$ if $i+1 > i^{*}$.
\end{itemize}
Since $\delta^{+}(u_1) = \delta^{+}(u_2) = 0$, it is clear also that $v_0$ is a direct successor of $v_1$ and $v_{n+1}$ is a direct successor of $v_n$. Now from~\eqref{e6.7} and~\eqref{p6.1e1} it follows that
\begin{multline}\label{l6.4e2}
2 d_w (u_1, u_2) = 2 d_w (v_0, v_{n+1})= (l(v_1) - l(v_0)) + \sum_{i=1}^{i^{*}-1} (l(v_{i+1}) - l(v_{i})) \\
{}+ \sum_{i=i^{*}}^{n-1} (l(v_{i}) - l(v_{i+1})) + (l(v_n) - l(v_{n+1})) = 2 l(v_{i^{*}}) - (l(u_1) + l(u_2)).
\end{multline}
The labeling $l = \hat{l}*w$ is monotone. Hence we have
\begin{equation}\label{l6.4e3}
l(v^{*}) = \max_{1 \leqslant i \leqslant n} l(v_i) \quad \text{and} \quad l(u_1) = l(u_2) = 0.
\end{equation}
Equality~\eqref{l6.4e1} follows from \eqref{l6.4e2} and \eqref{l6.4e3}.
\end{proof}

The dual form of Lemma~\ref{l6.4} can be formulated as follows (cf. Theorem~\ref{t2.9}).

\begin{lemma}\label{l3.4}
Let $T = T(r, l)$ be a monotone rooted tree, let $w = \hat{w} * l$ and let $u_1$, $u_2 \in V_{0}^{+}(T)$ and $u_1 \neq u_2$. Then~\eqref{l6.4e1} holds for the path $P$ joining $u_1$ and $u_2$ in~$T$.
\end{lemma}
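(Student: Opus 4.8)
The plan is to deduce Lemma~\ref{l3.4} from Lemma~\ref{l6.4} by passing through the bijective correspondence between monotone labelings and equidistant weights established in Proposition~\ref{p6.1}. The key observation is that Lemma~\ref{l6.4} and Lemma~\ref{l3.4} are the two halves of the same statement read in opposite directions: one starts from an equidistant weight $w$ and describes $d_w$ in terms of the induced labeling $\hat{l}*w$, the other starts from a monotone labeling $l$ and describes the additive metric of the induced weight $\hat{w}*l$ in terms of $l$ itself.

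Concretely, first I would let $T = T(r, l)$ be a monotone rooted tree and set $w = \hat{w}*l$, which by statement~\ref{p6.1:s2} of Proposition~\ref{p6.1} is the unique equidistant weight on $E(T)$ satisfying~\eqref{p6.1e1} whenever $v$ is a direct successor of $u$. Then I would invoke statement~\ref{p3.3:s3} of Proposition~\ref{p3.3}, namely the identity $\hat{l}*(\hat{w}*l) = l$, to conclude that the monotone labeling induced by $w$ is exactly the original labeling $l$. Now apply Lemma~\ref{l6.4} to the equidistant tree $T(r, w)$: for the two distinct points $u_1$, $u_2 \in V_0^{+}(T)$ and the path $P$ joining them, we obtain
\[
d_w(u_1, u_2) = \max_{v \in V(P)} (\hat{l}*w)(v) = \max_{v \in V(P)} l(v),
\]
which is precisely~\eqref{l6.4e1} for the present setting, so the lemma follows.

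The only point requiring a little care is that Lemma~\ref{l6.4} is stated for the metric $d_w$ generated by an equidistant weight, and here $w = \hat{w}*l$ is indeed equidistant by Proposition~\ref{p6.1}\,\ref{p6.1:s2}, so the hypotheses of Lemma~\ref{l6.4} are met verbatim; likewise $V_0^{+}(T)$ is intrinsic to the rooted tree $T$ and does not depend on whether we view $T$ as weighted or labeled, so $u_1$, $u_2$ and $P$ are the same objects in both formulations. I do not anticipate a genuine obstacle here — the substance of the argument was already carried out in Lemma~\ref{l6.4}, and the present lemma is its dual reformulation obtained by the bookkeeping identity $\hat{l}*(\hat{w}*l) = l$. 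If one preferred a self-contained proof, one could instead repeat the telescoping computation of Lemma~\ref{l6.4} directly, using~\eqref{e6.7}, the defining relation~\eqref{p6.1e1} for $w = \hat{w}*l$, and the monotonicity of $l$ (so that the maximum of $l$ along the path is attained at the first common ancestor $v_{i^*}$ and $l(u_1) = l(u_2) = 0$), but routing through Proposition~\ref{p3.3} is shorter.
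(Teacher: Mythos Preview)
Your proposal is correct and matches the paper's own approach: the paper gives no separate proof of Lemma~\ref{l3.4}, introducing it simply as ``the dual form of Lemma~\ref{l6.4}'', and your argument makes that duality explicit via the identity $\hat{l}*(\hat{w}*l)=l$ from Proposition~\ref{p3.3}\,\ref{p3.3:s3} and then invokes Lemma~\ref{l6.4} directly.
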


It is well known that, for phylogenetic equidistant trees, the restriction of $d_{w}$ on the set of leaves of $T$ is an ultrametric (see Theorem~7.2.5 in~\cite{SS2003OUP}). Lemma~\ref{l6.4} and Proposition~\ref{p3.5} imply the following generalization of this result.

\begin{theorem}\label{t6.4}
Let $T = T(w)$ be a weighted tree with strictly positive weight and let $d_{w} \colon V(T) \times V(T) \to \mathbb{R}^{+}$ be the corresponding additive metric. If there is $r \in V(T)$ such that $T(r, w)$ is an equidistant tree, then the restriction $d_{w}|_{V_{0}^{+}(T) \times V_{0}^{+}(T)}$ is an ultrametric on \(V_{0}^{+}(T)\).
\end{theorem}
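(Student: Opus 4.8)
The plan is to reduce the claim to Lemma~\ref{l6.4} together with Proposition~\ref{p3.5}. The set $V_0^{+}(T)$ is a subset of $V(T)$, so $d_w$ restricted to $V_0^{+}(T) \times V_0^{+}(T)$ is automatically a metric (it is the restriction of the additive metric $d_w$); what must be shown is that the strong triangle inequality holds on this subset. For this I would pass through the monotone labeling $l = \hat{l}*w$ supplied by Proposition~\ref{p6.1}\ref{p6.1:s1}, and use Lemma~\ref{l6.4} to express $d_w$ on $V_0^{+}(T)$ in the ``max-along-the-path'' form $d_w(u_1,u_2) = \max_{v \in V(P)} l(v)$.

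First I would note that, by Lemma~\ref{l6.4}, for all distinct $u_1, u_2 \in V_0^{+}(T)$ we have $d_w(u_1,u_2) = \max_{v \in V(P_{u_1,u_2})} l(v)$, where $P_{u_1,u_2}$ is the unique path joining $u_1$ and $u_2$ in $T$. This is exactly the expression $d_l(u_1,u_2)$ from formula~\eqref{e3.11} applied to the labeled tree $T(l)$ (forgetting the root). In other words, $d_w|_{V_0^{+}(T)\times V_0^{+}(T)}$ coincides with the restriction of $d_l$ to $V_0^{+}(T) \times V_0^{+}(T)$.

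Next I would check that the labeled tree $T = T(l)$ satisfies condition~\ref{p3.5:s3} of Proposition~\ref{p3.5}, i.e.\ that $\max\{l(u_1),l(v_1)\} > 0$ for every edge $\{u_1,v_1\} \in E(T)$. Indeed, for any edge one endpoint is a direct successor of the other, say $v_1$ is a direct successor of $u_1$; since $l$ is monotone, $l^{-1}(0) = V_0^{+}(T)$, and the internal node $u_1$ (which has $\delta^{+}(u_1) \geqslant 1$, as it has the successor $v_1$) satisfies $l(u_1) > 0$. Hence Proposition~\ref{p3.5} gives that $d_l$ is an ultrametric on all of $V(T)$; in particular it satisfies the strong triangle inequality for all triples in $V(T)$, and a fortiori for all triples in $V_0^{+}(T)$. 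Combining this with the identification $d_w|_{V_0^{+}(T)} = d_l|_{V_0^{+}(T)}$ from the previous step yields that $d_w|_{V_0^{+}(T) \times V_0^{+}(T)}$ is an ultrametric, which is the assertion.

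I do not expect a genuine obstacle here: the content is entirely carried by Lemma~\ref{l6.4} and Proposition~\ref{p3.5}, and the only thing to be careful about is the bookkeeping — verifying that the ``max'' formula of Lemma~\ref{l6.4} really is the function $d_l$ of~\eqref{e3.11}, and that the monotonicity hypothesis forces the positivity condition~\ref{p3.5:s3} on every edge. One minor point worth stating explicitly is that when $|V_0^{+}(T)| \leqslant 1$ the claim is vacuous (any function on a one-point or empty set trivially is an ultrametric), so one may assume $|V_0^{+}(T)| \geqslant 2$ before invoking Lemma~\ref{l6.4}.
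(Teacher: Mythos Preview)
Your proposal is correct and follows exactly the route the paper indicates: the paper does not give a detailed proof of Theorem~\ref{t6.4} but simply states that it follows from Lemma~\ref{l6.4} and Proposition~\ref{p3.5}, which is precisely what you carry out. Your verification that the monotone labeling $l=\hat{l}*w$ satisfies condition~\ref{p3.5:s3} of Proposition~\ref{p3.5} (via $l^{-1}(0)=V_0^{+}(T)$) and your handling of the trivial case $|V_0^{+}(T)|\leqslant 1$ fill in the only details the paper leaves implicit.
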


\begin{figure}[h]
\begin{center}
\begin{tikzpicture}[node distance=1cm,on grid]
\tikzset{solid/.style={circle,draw,inner sep=1.5,fill=black}}

\node [solid, label=left:$x_1$] (x1) {};
\node [solid, label=left:$x_2$] (x2) [below=of x1] {};
\node [solid] (y1) [right=of x1, yshift=-0.5cm] {};
\node [solid] (y2) [right=of y1] {};
\node [solid, label=right:$x_3$] (x3) [right=of y2, yshift=0.5cm] {};
\node [solid, label=right:$x_4$] (x4) [below=of x3] {};
\node [right=of x3, label=right:$T(w)$] {};
\draw (x1)-- node [above] {$2$} (y1)-- node [above] {$3$} (y2)-- node [above] {$1$} (x3);
\draw (y2)-- node [below] {$1$} (x4);
\draw (x2)-- node [below] {$2$} (y1);
\end{tikzpicture}
\end{center}
\caption{The restriction of $d_{w}$ on the set $X = \{x_1, x_2, x_3, x_4\}$ of the leaves of $T(w)$ is an ultrametric.}
\label{fig13}
\end{figure}
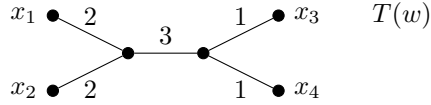

There is a weighted tree $T = T(w)$ such that the restriction of additive metric $d_{w}$ on the set $L = L(T)$ of all leaves of $T$ is an ultrametric but the tree $T(r, w)$ is not equidistant for any choice of the root $r \in V(T)$. (See Figure~\ref{fig13}.) Since, for every $r \in V(T)$, we have
$$
V_{0}^{+}(T) \subseteq L(T),
$$
the restriction $d_{w}|_{V_{0}^{+}(T) \times V_{0}^{+}(T)}$ is also an ultrametric on $V_{0}^{+}(T)$. Thus, the converse theorem to Theorem~\ref{t6.4} does not hold in general.

Let \(T = T(r, w)\) be a weighted rooted tree. In what follows we will use the denotation
\begin{equation}\label{e3.10}
V_{1}^{+}(T) := \{v \in V(T) \colon \delta^{+} (v) = 1\}.
\end{equation}

\begin{proposition}\label{p3.8}
Let \(T = T(r, w)\) be an equidistant tree and let \(\delta^{+}(v) \geqslant 2\) hold for some \(v \in V(T)\). Then there is an equidistant tree \(T^{\nabla} = T^{\nabla}(r^{\nabla}, w^{\nabla})\) such that 
\begin{gather}
V_1^{+}(T^{\nabla}) = \varnothing, \quad V_0^{+}(T) = V_0^{+}(T^{\nabla}), \quad V(T^{\nabla}) \subseteq V(T),\notag \\
\label{p3.8:e1} |V(T^{\nabla})| + |V_1^{+}(T)| = |V(T)|,
\end{gather}
and
\[
d_{w}|_{V_0^{+}(T) \times V_0^{+}(T)} = d_{w^{\nabla}}|_{V_0^{+}(T^{\nabla}) \times V_0^{+}(T^{\nabla})}.
\]
\end{proposition}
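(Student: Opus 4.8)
The plan is to construct $T^{\nabla}$ by ``suppressing'' all the degree-one vertices of $T$, i.e. the vertices $v$ with $\delta^{+}(v) = 1$, and to redistribute the weights along the suppressed paths. Concretely, I would define $V(T^{\nabla}) = V(T) \setminus V_1^{+}(T)$, keep the same root $r^{\nabla} = r$ (note $\delta^{+}(r) \geqslant 2$ would have to be checked, but in any case if $r \in V_1^{+}(T)$ we can move the root down along the unique outgoing edge until we reach a vertex of out-degree $\neq 1$, which exists since $V_0^{+}(T) \neq \varnothing$), and put an edge $\{u, v\}$ into $E(T^{\nabla})$ precisely when there is a path $u = p_0, p_1, \ldots, p_k = v$ in $T$ with $v$ a successor of $u$, $p_1, \ldots, p_{k-1} \in V_1^{+}(T)$, and $u, v \notin V_1^{+}(T)$. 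The weight of this new edge is $w^{\nabla}(\{u, v\}) = \sum_{i=1}^{k} w(\{p_{i-1}, p_i\})$. The verification that $T^{\nabla}$ is a tree is routine: contracting or suppressing degree-two (here, out-degree-one) vertices preserves acyclicity and connectedness, and each $v \in V(T)$ with $\delta^{+}(v) \neq 1$ has a well-defined nearest non-suppressed ancestor, so the new edge set is well-defined and $|V(T^{\nabla})| = |E(T^{\nabla})| + 1$.

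Next I would verify the displayed set equalities and the counting identity~\eqref{p3.8:e1}. The inclusion $V(T^{\nabla}) \subseteq V(T)$ is immediate from the construction. The equality $V_0^{+}(T) = V_0^{+}(T^{\nabla})$ follows because suppression of out-degree-one vertices does not change the out-degree of any retained vertex: if $\delta^{+}_T(v) = 0$ then $v$ is retained and still has out-degree $0$ in $T^{\nabla}$, and conversely a retained vertex that had out-degree $\geqslant 2$ keeps at least two outgoing edges (one per child-subtree, each subtree contributing exactly one suppressed path to its first non-suppressed vertex). The identity $V_1^{+}(T^{\nabla}) = \varnothing$ is the design goal: every retained vertex has out-degree $0$ or $\geqslant 2$ in $T^{\nabla}$ by the same argument. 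Finally $|V(T^{\nabla})| + |V_1^{+}(T)| = |V(T)|$ is just $|V(T) \setminus V_1^{+}(T)| + |V_1^{+}(T)| = |V(T)|$ since the removed set is exactly $V_1^{+}(T)$.

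The substantive point is that $T^{\nabla}(r^{\nabla}, w^{\nabla})$ is again equidistant and that the induced metric on $V_0^{+}$ is unchanged. For the latter: given $u_1, u_2 \in V_0^{+}(T) = V_0^{+}(T^{\nabla})$, the unique path $P$ joining them in $T$ passes through a sequence of vertices, and the corresponding path $P^{\nabla}$ in $T^{\nabla}$ is obtained by deleting the suppressed vertices; by the definition of $w^{\nabla}$ as a sum of the $w$-weights along each suppressed segment, $\sum_{e \in E(P^{\nabla})} w^{\nabla}(e) = \sum_{e \in E(P)} w(e)$, so $d_{w^{\nabla}}(u_1, u_2) = d_{w}(u_1, u_2)$. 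For equidistance: for $u \in V_0^{+}(T^{\nabla})$ the root-to-$u$ path in $T^{\nabla}$ is the suppression of the root-to-$u$ path in $T$, and again the weight sums agree edge-segment by edge-segment, so the constant $K$ is literally inherited, $\sum_{i} w^{\nabla}(\{v_i, v_{i+1}\}) = K$ for the $T^{\nabla}$-path from $r^{\nabla}$ to $u$. (If we had to shift the root downward by a segment of total $w$-weight $c$, then the new constant is $K - c$, still a constant independent of $u$, so equidistance is preserved.)

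The main obstacle I anticipate is bookkeeping around the root when $r \in V_1^{+}(T)$: one must check that moving the root down to the first vertex of out-degree $\neq 1$ is well-defined (it is, following the unique chain of out-degree-one vertices, which must terminate at a vertex of out-degree $0$ or $\geqslant 2$ because $T$ is finite), that this relocated root still lies in $V(T^{\nabla})$, and that $V_0^{+}$ and the restricted metric are genuinely unaffected by this relocation (they are, since no vertex of $V_0^{+}(T)$ is ever a strict out-degree-one vertex, and distances between leaves do not involve the root). Everything else is a direct, if slightly tedious, chase through the path-decomposition and the additivity of $d_w$ together with formula~\eqref{e6.7}.
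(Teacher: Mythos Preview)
Your proposal is correct and follows essentially the same approach as the paper: both constructions suppress every vertex of out-degree one, sum the edge weights along each suppressed chain to define the new weight, and handle the case $r \in V_1^{+}(T)$ by first relocating the root downward to the nearest vertex of out-degree at least~$2$ (whose existence is guaranteed by the hypothesis $\delta^{+}(v)\geqslant 2$ for some $v$). The only cosmetic difference is that the paper performs the suppression inductively, one vertex $v^{*}\in V_1^{+}(T)$ at a time via formulas \eqref{t6.4e1}--\eqref{t6.4e3}, whereas you describe the resulting tree $T^{\nabla}$ globally in a single step.
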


\begin{proof}
The proposition is evident if \(|V_{0}^{+}(T)| = 1\) or \(V_{1}^{+}(T) = \varnothing\). Let \(V_{1}^{+}(T) \neq \varnothing\) and $|V_{0}^{+}(T)| \geqslant 2$ hold. The last inequality holds if and only if the set
$$
V_{2}^{++}(T) := \{v \in V(T) \colon \delta^{+}(v) \geqslant 2\}
$$
is nonempty. Since $\delta^{+}(r)$ is strictly positive, we have either $r \in V_{2}^{++}(T)$, or \(r \in V_{1}^{+}(T)\). We first do the case $r \in V_{2}^{++}(T)$. Starting from the tree $T(r, w)$ we define an equidistant tree $T^{\nabla} = T^{\nabla}(r^{\nabla}, w^{\nabla})$ by the following inductive rule. 

We choose an arbitrary $v^{*} \in V_{1}^{+}(T)$ and consider the weighted rooted tree $T_{1} = T_{1}(r_1, w_1)$ such that
\begin{gather}\label{t6.4e1}
r_1 = r \quad \text{and}\quad V(T_{1}) = V(T) \setminus \{v^*\} \quad\text{and}\\
\label{t6.4e2}
E(T_{1}) = \Bigl(E(T) \cup \bigl\{\{u_1, u_2\}\bigr\}\Bigr) \setminus \bigl\{\{u_1, v^{*}\}, \{u_2, v^{*}\}\bigr\},
\end{gather}
and
\begin{equation}\label{t6.4e3}
w_1(e) = \begin{cases}
w(e), & \text{if $e \in E(T)$}\\
w(\{u_1, v^{*}\}) + w(\{v^{*}, u_2\}), & \text{if $e = \{u_1, u_2\}$},
\end{cases}
\end{equation}
where $u_1$ and $u_2$ are the neighbors of $v^*$ in $T$. If $V_{1}^{+}(T_1) = \varnothing$, then we set 
\[
T^{\nabla}(r^{\nabla}, w^{\nabla}) = T_{1}(r_1, w_1).
\]
Otherwise, by repeating the above-described procedure with $T_{1}(r_1, w_1)$ instead of $T(r, w)$, we obtain the weighted rooted tree $T_{2}(r_2, w_2)$, etc. Since $V_1^{+}(T)$ is finite and 
\[
|V_1^{+}(T)| = |V_1^{+}(T_1)| + 1 = |V_1^{+}(T_2)| + 2 = \ldots
\]
holds, we have $V_1^{+}(T_k) = \varnothing$ for some $k \geqslant 1$. Thus $T^{\nabla}(r^{\nabla}, w^{\nabla}) = T_{k}(r_k, w_k)$ and the construction of $T^{\nabla}$ is completed.

It follows from~\eqref{t6.4e1}--\eqref{t6.4e3}, that 
\[
r^{\nabla} = r, \quad V(T^{\nabla}) = V_0^{+}(T) + V_{2}^{++}(T), \quad V_{1}^{+}(T^{\nabla}) = \varnothing
\]
and $V_0^{+}(T^{\nabla}) = V_0^{+}(T) = L(T^{\nabla})$, where $L(T^{\nabla})$ is the set of leaves of $T^{\nabla}$. Moreover, the equality 
\[
d_{w}(u^{\nabla}, v^{\nabla}) = d_{w^{\nabla}}(u^{\nabla}, v^{\nabla})
\]
holds for all $u^{\nabla}$, $v^{\nabla} \in V(T^{\nabla})$. Hence $T^{\nabla}(r^{\nabla}, w^{\nabla})$ is equidistant and
\begin{equation}\label{t6.4e4}
d_w|_{V_0^+(T) \times V_0^+(T)} = d_{w^{\nabla}}|_{V_0^+(T) \times V_0^+(T)} = d_{w^{\nabla}}|_{V_0^+(T^{\nabla}) \times V_0^+(T^{\nabla})}.
\end{equation}

In the case $r \in V_1^+(T)$ we define the weighted rooted tree $T_1(r_1, w_1)$ such that $r_1$ is the unique direct successor of $r$, 
\[
V(T_1) = V(T) \setminus \{r\}, \quad E(T_1) = E(T) \setminus \{\{r_1, r\}\}
\]
and $w_1 = w|_{E(T_1)}$. Repeating this procedure, we find the smallest positive integer $k_0$ such that $r_{k_0} \in V_2^{++}(T)$. The weighted rooted tree $T_{k_0} = T_{k_0}(r_{k_0}, w_{k_0})$ is equidistant and 
\[
d_w|_{V(T_{k_0}) \times V(T_{k_0})} = d_{w_{k_0}} \quad \text{and} \quad V_0^+(T) = V_0^+(T_{k_0}).
\]
Now using~\eqref{t6.4e1}--\eqref{t6.4e3} with $T_{k_0}(r_{k_0}, w_{k_0})$ instead of $T(r, w)$ we can construct \(T^{\nabla}(r^{\nabla}, w^{\nabla})\) such that \(V_1^{+}(T^{\nabla}) = \varnothing\) and \eqref{t6.4e4} hold.
\end{proof}

\begin{remark}\label{r3.9}
The condition
\begin{itemize}
\item there is \(v \in V(T)\) such that \(\delta^{+}(v) \geqslant 2\)
\end{itemize}
cannot be dropped in Proposition~\ref{p3.8}. If the inequality \(\delta^{+}(v) \leqslant 1\) holds for all \(v \in V(T)\), then \(T = T(r, w)\) is a weighted path joining the root \(r\) with the unique vertex belonging to the one-point set \(V_0^{+}(T)\). In this case we have the equality
\[
|V_1^{+}(T)| + 1 = |V(T)|,
\]
which together with \eqref{p3.8:e1} implies \(|V(T^{\nabla})| = 1\). Thus, \(T^{\nabla}\) is empty, contrary to Definition~\ref{d2.2}
\end{remark}

An example of transition from \(T(r, w)\) to \(T^{\nabla}(r^{\nabla}, w^{\nabla})\) is given by Figure~\ref{fig5}.

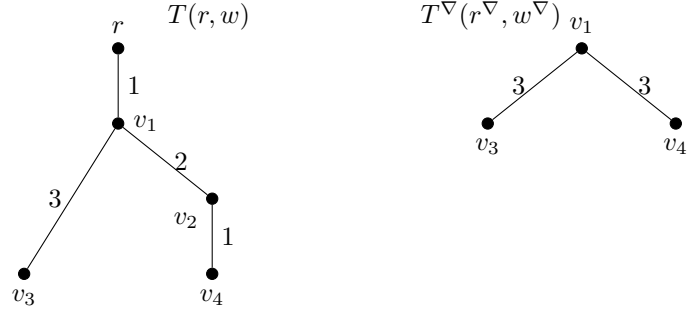
\begin{figure}[h]
\begin{center}
\begin{tikzpicture}[level 1/.style={level distance=1cm,sibling distance=25mm},
level 2/.style={level distance=1cm,sibling distance=25mm},
level 3/.style={level distance=1cm,sibling distance=25mm},
solid node/.style={circle,draw,inner sep=1.5,fill=black},
hollow node/.style={circle,draw,inner sep=1.5}]
\node (root1) [solid node, label=above:{\(r\)}] at (0,0) {}
	child{node[solid node, label=right:{\(v_1\)}]{}
		child [level distance=2cm]{node[solid node, label=below:{\(v_3\)}]{} edge from parent node[left]{$3$}}
		child{node[solid node, label=below left:{\(v_2\)}]{} 
			child{node[solid node, label=below:{\(v_4\)}]{} edge from parent node[right]{$1$}}
			edge from parent node[right]{$2$}}
		edge from parent node[right]{$1$}
	};
\node (title1) [right=of root1, label=above:{\(T(r, w)\)}] {};

\node(root2)[solid node, right=of root1, xshift=5cm, label=above:{\(v_1\)}] {}
	child {node [solid node, label=below:{\(v_3\)}]{} edge from parent node[left]{$3$}}
	child {node [solid node, label=below:{\(v_4\)}]{} edge from parent node[right]{$3$}};
\node (title2) [left=of root2, label=above:{\(T^{\nabla}(r^{\nabla}, w^{\nabla})\)}] {};
\end{tikzpicture}
\end{center}
\caption{Here \(T(r, w)\) and \(T^{\nabla}(r^{\nabla}, w^{\nabla})\) are equidistant, \(r^{\nabla} = v_1\) and \(w^{\nabla}(\{u, v\}) = d_w(u, v)\) holds for every \(\{u, v\} \in E(T^{\nabla})\).}
\label{fig5}
\end{figure}

\begin{remark}\label{r4.10}
The set \(V(T^{\nabla})\) has the following nice geometric interpretation:
\begin{itemize}
\item A point \(x\) of the metric space \((V(T), d_{w})\) belongs to \(V(T^{\nabla})\) if and only if 
\[
d_{w}(y,x) = d_{w}(x,z) = \frac{1}{2} d_{w}(y,z)
\]
holds for some \(y\), \(z \in V_0^{+}(T)\).
\end{itemize}
\end{remark}

The following proposition can be considered as a generalization of Theorem~7.2.8 from~\cite{SS2003OUP}.

\begin{proposition}\label{p6.9}
Let $T_1(r_1, w_1)$ and $T_2(r_2, w_2)$ be equidistant trees. Then
\begin{multline*}
\Bigl(T_1^{\nabla}\bigl(r_1^{\nabla}, w_1^{\nabla}\bigr) \simeq T_2^{\nabla}\bigl(r_2^{\nabla}, w_2^{\nabla}\bigr)\Bigr) \Leftrightarrow \\
\left(\left(V_0^{+}(T_1), d_{w_1}|_{V_0^{+}(T_1) \times V_0^{+}(T_1)}\right) \simeq \left(V_0^{+}(T_2), d_{w_2}|_{V_0^{+}(T_2) \times V_0^{+}(T_2)}\right)\right)
\end{multline*}
is valid
\end{proposition}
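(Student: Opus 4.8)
The plan is to reduce this statement to the rooted‑tree version of the representation theorem and to the already‑established duality between equidistant trees and monotone trees, the latter supplied by Proposition~\ref{p3.3}. The first observation is that, after passing to $T_i^{\nabla}$, we may assume without loss of generality that $V_1^{+}(T_i) = \varnothing$: indeed Proposition~\ref{p3.8} gives $V_0^{+}(T_i) = V_0^{+}(T_i^{\nabla})$ and $d_{w_i}|_{V_0^{+}(T_i) \times V_0^{+}(T_i)} = d_{w_i^{\nabla}}|_{V_0^{+}(T_i^{\nabla}) \times V_0^{+}(T_i^{\nabla})}$, so both sides of the claimed equivalence are unchanged when we replace $T_i(r_i,w_i)$ by $T_i^{\nabla}(r_i^{\nabla},w_i^{\nabla})$, and the point of $T^{\nabla}$ is precisely that $(T^{\nabla})^{\nabla} = T^{\nabla}$. (One should note separately the degenerate case $|V_0^{+}(T_i)| = 1$, where $T_i^{\nabla}$ is a single vertex; then the ultrametric space is a one‑point space and the equivalence is immediate.) So from now on both trees have no out‑degree‑one vertices.

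Next I would translate the weight side into the labeling side. Put $l_i = \hat{l} * w_i$, the monotone labeling associated to $w_i$ by Proposition~\ref{p6.1}. By Proposition~\ref{p3.3}\ref{p3.3:s1}, $T_1(r_1,w_1) \simeq T_2(r_2,w_2)$ if and only if $T_1(r_1,l_1) \simeq T_2(r_2,l_2)$ as labeled rooted trees, and this passes through $T_i^{\nabla}$ compatibly (the transition formula \eqref{p6.1e1} is local, so it commutes with the leaf‑suppression used to build $T^{\nabla}$). Thus the left‑hand side of the proposition becomes: the monotone rooted trees $T_1^{\nabla}(r_1^{\nabla}, l_1^{\nabla})$ and $T_2^{\nabla}(r_2^{\nabla}, l_2^{\nabla})$ are isomorphic. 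Now Lemma~\ref{l3.4} identifies, for the monotone tree $T_i^{\nabla}(r_i^{\nabla}, l_i^{\nabla})$, the restricted metric on $V_0^{+}(T_i^{\nabla})$ with the ``max‑along‑the‑path'' ultrametric; combined with Theorem~\ref{t1.7} this says that $V_0^{+}(T_i^{\nabla}) = L(T_i^{\nabla})$ with the restricted metric is a finite ultrametric space whose representing tree is exactly $T_i^{\nabla}$ — here the hypothesis $V_1^{+}(T_i^{\nabla}) = \varnothing$ is what makes condition~\ref{t1.7:c1} of Theorem~\ref{t1.7} hold and pins down $T_i^{\nabla}$ as the representing tree (up to isomorphism of labeled rooted trees). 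Then Theorem~\ref{t2.7}, applied to the two ultrametric spaces $(V_0^{+}(T_i), d_{w_i}|_{V_0^{+}(T_i) \times V_0^{+}(T_i)})$, yields: these spaces are isometric if and only if their representing trees $T_1^{\nabla}$, $T_2^{\nabla}$ are isomorphic as labeled rooted trees. Chaining the equivalences gives the proposition.

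The step I expect to be the main obstacle is verifying cleanly that $T_i^{\nabla}$ really \emph{is} (isomorphic to) the representing tree of the ultrametric space $(V_0^{+}(T_i), d_{w_i}|_{\cdot})$, rather than merely some tree that induces the right metric on its leaves. One must check that every vertex of $T_i^{\nabla}$ is either a leaf or has out‑degree $\geqslant 2$ — that is $V_1^{+}(T_i^{\nabla}) = \varnothing$, which we arranged — and that the labeling $l_i^{\nabla}$ is strictly decreasing along edges toward the leaves and vanishes exactly on $V_0^{+}$, which is Proposition~\ref{p6.1}'s monotonicity together with the definition of $T^{\nabla}$ preserving $V_0^{+}$. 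Given these, Theorem~\ref{t1.7} guarantees there is \emph{some} ultrametric space whose representing tree is $T_i^{\nabla}(r_i^{\nabla}, l_i^{\nabla})$, and Lemma~\ref{l3.4} identifies the metric on its leaves with $d_{w_i}|_{V_0^{+}(T_i^{\nabla}) \times V_0^{+}(T_i^{\nabla})}$ and hence with $d_{w_i}|_{V_0^{+}(T_i) \times V_0^{+}(T_i)}$; uniqueness of the representing tree (the construction preceding Theorem~\ref{t2.7}) then does the rest. The remaining bookkeeping — that all the intermediate isomorphisms of labeled rooted trees can be upgraded to or downgraded from isomorphisms of weighted rooted trees via Proposition~\ref{p3.3}, and that passing to the $\nabla$‑reduction is a two‑sided no‑op on both conditions — is routine and I would simply cite Propositions~\ref{p3.3} and~\ref{p3.8}.
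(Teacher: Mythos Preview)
Your proof is correct and takes essentially the same approach as the paper, which gives a one-line proof citing only Proposition~\ref{p3.3} and Theorem~\ref{t2.7}; you have simply unpacked the details that the paper leaves implicit (the reduction via Proposition~\ref{p3.8}, the identification of $T_i^{\nabla}$ with the representing tree via Theorem~\ref{t1.7} and Lemma~\ref{l3.4}, and the degenerate one-point case).
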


\begin{proof}
It follows from Proposition~\ref{p3.3} and Theorem~\ref{t2.7}.
\end{proof}

\begin{theorem}\label{c3.13}
Let \(T = T(r)\) be a nonempty rooted tree. Then the following statements are equivalent:
\begin{enumerate}
\item\label{c3.13:s1} The set \(V_1^{+}(T)\) is empty.
\item\label{c3.13:s2} The logical equivalence
\begin{multline*}
\bigl(T(r, w_1) \simeq T(r, w_2)\bigr) \Leftrightarrow\\
\left(\left(V_0^{+}(T), d_{w_1}|_{V_0^{+}(T) \times V_0^{+}(T)}\right) \simeq \left(V_0^{+}(T), d_{w_2}|_{V_0^{+}(T) \times V_0^{+}(T)}\right)\right)
\end{multline*}
is valid whenever \(w_1 \colon E(T) \to \mathbb{R}^{+}\) and \(w_2 \colon E(T) \to \mathbb{R}^{+}\) are equidistant.
\end{enumerate}
\end{theorem}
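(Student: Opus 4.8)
The plan is to prove the equivalence by establishing $\ref{c3.13:s1}\Rightarrow\ref{c3.13:s2}$ directly and $\ref{c3.13:s2}\Rightarrow\ref{c3.13:s1}$ in contrapositive form. For $\ref{c3.13:s1}\Rightarrow\ref{c3.13:s2}$, I would suppose $V_1^{+}(T)=\varnothing$; the case $|V(T)|=1$ is trivial, so let $|V(T)|\geqslant 2$. Then $\delta^{+}(r)\geqslant 2$ and every vertex has out-degree $0$ or $\geqslant 2$, hence $|V_0^{+}(T)|\geqslant 2$ and some vertex has out-degree $\geqslant 2$, so Proposition~\ref{p3.8} applies; but when $V_1^{+}(T)=\varnothing$ its construction leaves the tree unchanged, i.e.\ $T^{\nabla}(r^{\nabla},w^{\nabla})=T(r,w)$ for every equidistant $w$. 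Reading Proposition~\ref{p6.9} with $T_1=T_2=T$ is then precisely the equivalence in $\ref{c3.13:s2}$. (One may also bypass Proposition~\ref{p6.9}: each monotone labeling $l_i=\hat{l}*w_i$ satisfies the hypotheses of Theorem~\ref{t1.7}, so $T(r,l_i)\simeq T_{X_i}$ for a finite ultrametric space $(X_i,d_i)$; by Lemma~\ref{l3.4} and Theorem~\ref{t2.9} the induced bijection $V_0^{+}(T)\to X_i$ is an isometry of $(V_0^{+}(T),d_{w_i}|_{V_0^{+}(T)\times V_0^{+}(T)})$ onto $(X_i,d_i)$, and Theorem~\ref{t2.7} together with Proposition~\ref{p3.3} finishes it.)

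For $\ref{c3.13:s2}\Rightarrow\ref{c3.13:s1}$, I would assume $V_1^{+}(T)\neq\varnothing$ and produce equidistant $w_1,w_2\colon E(T)\to\mathbb{R}^{+}$ with $(V_0^{+}(T),d_{w_1}|_{V_0^{+}(T)\times V_0^{+}(T)})\simeq(V_0^{+}(T),d_{w_2}|_{V_0^{+}(T)\times V_0^{+}(T)})$ but $T(r,w_1)\not\simeq T(r,w_2)$. If $|V_0^{+}(T)|=1$, then $T$ is a path from $r$ to its single leaf, every weight on $T$ is equidistant, and $\operatorname{Aut}(T(r))$ is trivial; taking $w_1\equiv 1$ and $w_2$ equal to $1$ on every edge except the one at $r$, where $w_2=2$, the two spaces are one-point spaces (so isometric) while $w_1\neq w_2$ prevents any isomorphism. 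So assume $|V_0^{+}(T)|\geqslant 2$; then some vertex has out-degree $\geqslant 2$ and Proposition~\ref{p3.8} supplies $T^{\nabla}$. Inspection of its construction shows that the underlying rooted tree $T^{\nabla}(r^{\nabla})$ depends only on $T(r)$, that $V_0^{+}(T^{\nabla})=V_0^{+}(T)$, that $w\mapsto w^{\nabla}$ is linear, that $|V(T^{\nabla})|=|V(T)|-|V_1^{+}(T)|$ by \eqref{p3.8:e1}, and that $d_w|_{V_0^{+}(T)\times V_0^{+}(T)}=d_{w^{\nabla}}|_{V_0^{+}(T^{\nabla})\times V_0^{+}(T^{\nabla})}$ for every equidistant $w$. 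Hence $w_1^{\nabla}=w_2^{\nabla}$ already makes the two ultrametric spaces coincide, and it suffices to find equidistant $w_1,w_2$ with $w_1^{\nabla}=w_2^{\nabla}$ and $T(r,w_1)\not\simeq T(r,w_2)$.

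To do this, I would fix any equidistant $w_0$ on $T$ (one exists by Proposition~\ref{p6.1}, since $T$ carries the monotone labeling $l(v)=$ the length of a longest descending path from $v$ to $V_0^{+}(T)$) and set $\omega=w_0^{\nabla}$. By Proposition~\ref{p6.1}, equidistant weights on $T$ and on $T^{\nabla}$ correspond bijectively and linearly to monotone labelings, and a short computation with \eqref{p6.1e1} identifies $w\mapsto w^{\nabla}$ with the restriction map $l\mapsto l|_{V(T^{\nabla})}$. Since $V_1^{+}(T)\neq\varnothing$, the set $V(T)\setminus V(T^{\nabla})$ is nonempty, and every monotone labeling of $T^{\nabla}$ admits infinitely many monotone extensions to $T$ (some label on $V_1^{+}(T)$ is free to vary over a nonempty open interval along a contracted chain of length $\geqslant 2$, or over a ray when $r\in V_1^{+}(T)$); therefore the fibre $F=\{w\ \text{equidistant on}\ T:\ w^{\nabla}=\omega\}$ is infinite. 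On the other hand, for the fixed rooted tree $T(r)$ the set of weights $w'$ with $T(r,w')\simeq T(r,w)$ is the orbit of $w$ under the finite group $\operatorname{Aut}(T(r))$ and hence finite. Consequently the infinite fibre $F$ contains two weights $w_1,w_2$ lying in different $\operatorname{Aut}(T(r))$-orbits; these satisfy $w_1^{\nabla}=w_2^{\nabla}=\omega$ and $T(r,w_1)\not\simeq T(r,w_2)$, as needed.

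The step I expect to be the main obstacle is the last one: simply redistributing weight inside a degree-two chain of $T$ need not alter the isomorphism type of $T(r,w)$, since such a change can be undone by a nontrivial automorphism of $T(r)$. The non-isomorphism has to be forced instead by a counting argument — the infinitude of a fibre of $w\mapsto w^{\nabla}$ against the finiteness of $\operatorname{Aut}(T(r))$ — which is why passing through $T^{\nabla}$ and Proposition~\ref{p3.8} is the crux of the converse implication.
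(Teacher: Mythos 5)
Your proof is correct, and the forward direction \ref{c3.13:s1} \(\Rightarrow\) \ref{c3.13:s2} (via \(T^{\nabla}=T\) and Proposition~\ref{p6.9}) is exactly the paper's argument. The converse is where you genuinely diverge. The paper also passes to the contrapositive and also exploits the fact that redistributing weight along the chain \(v_1\)--\(v^{*}\)--\(v_2\) through a vertex \(v^{*}\in V_1^{+}(T)\) leaves \(w^{\nabla}\) (hence the restricted metric) unchanged, but it then produces two \emph{explicit} witnesses: it picks pairwise distinct positive \(t_1,t_2,s_1,s_2\) with \(t_1+t_2=s_1+s_2=w(\{v_1,v^{*}\})+w(\{v^{*},v_2\})\) that additionally avoid every existing value \(w(e)\), so that the multisets of edge-weights of \(T(r,w_1)\) and \(T(r,w_2)\) differ and no isomorphism can exist. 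This genericity condition is precisely what neutralizes the obstacle you flag at the end --- that a naive redistribution could be undone by a nontrivial automorphism of \(T(r)\). Your route instead is non-constructive: you observe that the fibre of \(w\mapsto w^{\nabla}\) over a fixed \(\omega\) is infinite (via the labeling picture of Proposition~\ref{p6.1}, where \(w\mapsto w^{\nabla}\) becomes restriction of monotone labelings and the labels on \(V_1^{+}(T)\) range over nonempty open intervals), while each isomorphism class of weights on the fixed rooted tree \(T(r)\) is a single orbit of the finite group \(\operatorname{Aut}(T(r))\), hence finite; pigeonhole then yields two non-isomorphic weights in the same fibre. What the paper's approach buys is brevity and concrete witnesses; what yours buys is a stronger conclusion (every fibre contains infinitely many isomorphism classes) and immunity from having to engineer the weight values by hand. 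Your separate treatment of the degenerate case \(|V_0^{+}(T)|=1\) (where \(T\) is a rooted path, every strictly positive weight is equidistant, and \(\operatorname{Aut}(T(r))\) is trivial) is also sound and roughly corresponds to the case \(v^{*}=r\) that the paper dispatches with a one-line remark.
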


\begin{proof}
\(\ref{c3.13:s1} \Rightarrow \ref{c3.13:s2}\). Let \ref{c3.13:s1} hold. Then we have the equality
\[
T^{\nabla}(r^{\nabla}, w_i^{\nabla}) = T^{\nabla}(r, w_i^{\nabla})
\]
for \(i = 1\), \(2\) and the validity of \ref{c3.13:s2} follows from Proposition~\ref{p6.9}.

\(\ref{c3.13:s2} \Rightarrow \ref{c3.13:s1}\). This implication is true if and only if 
\[
{\rceil}\ref{c3.13:s1} \Rightarrow {\rceil}\ref{c3.13:s2},
\]
where \({\rceil}\) is the logical negation symbol. Suppose there is \(v^{*} \in V^{+}(T)\) such that \(\delta^{+}(v^{*}) = 1\). If \(v^{*} \neq r\), then we have \(\delta(v^{*}) = 1\) and, consequently, there are exactly two nodes \(v_1\) and \(v_2\) such that \(v^{*}\) is a direct successor of \(v_1\) and \(v_2\) is the unique direct successor of \(v^{*}\).

Let \(w \colon E(T) \to \mathbb{R}^{+}\) be an equidistant weight. Then we can find strictly positive, pairwise distinct real numbers \(t_1\), \(t_2\), \(s_1\), \(s_2\) such that
\begin{equation}\label{c3.13:e2}
t_1 + t_2 = s_1 + s_2 = w(\{v_1, v^{*}\}) + w(\{v^{*}, v_2\})
\end{equation}
and
\begin{equation}\label{c3.13:e3}
(w(e) - t_1)(w(e) - t_2)(w(e) - s_1)(w(e) - s_2) \neq 0
\end{equation}
holds for every \(e \in E(T)\).

Let us define the weights \(w_1 \colon E(T) \to \mathbb{R}^{+}\) and \(w_2 \colon E(T) \to \mathbb{R}^{+}\) as
\[
w_1(e) = \begin{cases}
s_1, & \text{if } e = \{v_1, v^{*}\},\\
s_2, & \text{if } e = \{v^{*}, v_2\},\\
w(e), & \text{otherwise}
\end{cases} \quad \text{and} \quad
w_2(e) = \begin{cases}
t_1, & \text{if } e = \{v_1, v^{*}\},\\
t_2, & \text{if } e = \{v^{*}, v_2\},\\
w(e), & \text{otherwise}.
\end{cases}
\]
Then \(w_1\) and \(w_2\) are equidistant and, moreover, using formula \eqref{t6.4e3} we obtain the equalities
\[
w^{\nabla} = w_1^{\nabla} = w_2^{\nabla}.
\]
Now Proposition~\ref{p3.8} implies
\[
d_{w_1}|_{V_0^{+}(T) \times V_0^{+}(T)} = d_{w_2}|_{V_0^{+}(T) \times V_0^{+}(T)}.
\]
Since \(t_1\), \(t_2\), \(s_1\), \(s_2\) are pairwise distinct, from condition~\eqref{c3.13:e3} it follows that \(T(r, w_1)\) and \(T(r, w_2)\) are not isomorphic. Thus, \ref{c3.13:s2} is false. Hence, \({\rceil}\ref{c3.13:s1} \Rightarrow {\rceil}\ref{c3.13:s2}\) is valid if \(v^{*} \neq r\). 

The case \(v^{*} = r\) is more simple and can be considered similarly.
\end{proof}

The following result is a dual form of Theorem~\ref{c3.13} and it is a partial generalization of Theorem~\ref{t2.7}.

\begin{theorem}\label{t4.13}
Let \(T = T(r)\) be a rooted tree. Then the following statements are equivalent:
\begin{enumerate}
\item\label{t4.13:s1} The set \(V_1^{+}(T)\) is empty.
\item\label{t4.13:s2} The logical equivalence
\begin{multline*}
\bigl(T(r, l_1) \simeq T(r, l_2)\bigr) \Leftrightarrow\\
\left(\left(V_0^{+}(T), d_{l_1}|_{V_0^{+}(T) \times V_0^{+}(T)}\right) \simeq \left(V_0^{+}(T), d_{l_2}|_{V_0^{+}(T) \times V_0^{+}(T)}\right)\right)
\end{multline*}
is valid whenever \(l_1 \colon V(T) \to \mathbb{R}^{+}\) and \(l_2 \colon V(T) \to \mathbb{R}^{+}\) are monotone.
\end{enumerate}
\end{theorem}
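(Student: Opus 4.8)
The plan is to deduce Theorem~\ref{t4.13} from its weighted analogue Theorem~\ref{c3.13} by transporting all the data along the duality between monotone labelings and equidistant weights furnished by Proposition~\ref{p6.1}, Proposition~\ref{p3.3} and Lemma~\ref{l3.4}. Before doing this I would dispose of the trivial case $V(T) = \{r\}$: here $\delta^{+}(r) = 0$, so $V_1^{+}(T) = \varnothing$; the only monotone labeling of $T(r)$ is $l(r) = 0$; and $\bigl(V_0^{+}(T), d_l|_{V_0^{+}(T) \times V_0^{+}(T)}\bigr)$ is a one-point metric space. Thus \ref{t4.13:s1} and \ref{t4.13:s2} both hold, and from now on one may assume that $T$ is nonempty.

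Now fix monotone labelings $l_1, l_2 \colon V(T) \to \mathbb{R}^{+}$ and put $w_i := \hat{w}*l_i$. By Proposition~\ref{p6.1}\ref{p6.1:s1} each $w_i$ is an equidistant weight on $E(T)$; moreover, Proposition~\ref{p6.1} together with Proposition~\ref{p3.3}\ref{p3.3:s3} shows that $l \mapsto \hat{w}*l$ is a bijection from the set of all monotone labelings of $T(r)$ onto the set of all equidistant weights of $T(r)$, with inverse $w \mapsto \hat{l}*w$. I would then record two transfer facts. First, Proposition~\ref{p3.3}\ref{p3.3:s2}, taken with $T_1 = T_2 = T$ and $r_1 = r_2 = r$, gives
\[
\bigl(T(r, w_1) \simeq T(r, w_2)\bigr) \Leftrightarrow \bigl(T(r, l_1) \simeq T(r, l_2)\bigr).
\]
Second, for any two distinct $u_1, u_2 \in V_0^{+}(T)$, with $P$ the path joining them in $T$, formula~\eqref{e3.11} gives $d_{l_i}(u_1, u_2) = \max_{v \in V(P)} l_i(v)$, while Lemma~\ref{l3.4}, applied to the monotone rooted tree $T(r, l_i)$, gives $d_{w_i}(u_1, u_2) = \max_{v \in V(P)} l_i(v)$ (this is where the normalization $C = \tfrac12$ in \eqref{p6.1e1} is used; cf.\ Remark~\ref{r4.5}). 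Hence $d_{l_i}|_{V_0^{+}(T) \times V_0^{+}(T)} = d_{w_i}|_{V_0^{+}(T) \times V_0^{+}(T)}$ for $i = 1, 2$, and both restrictions are genuine ultrametrics: by Proposition~\ref{p3.5} it suffices that $\max\{l_i(u), l_i(v)\} > 0$ for every edge $\{u, v\}$ of $T$, which holds because a monotone labeling is strictly positive off $V_0^{+}(T)$ and no edge of the nonempty tree $T$ joins two elements of $V_0^{+}(T)$ (one endpoint of such an edge is a direct successor of the other, so that other vertex has out-degree $\geqslant 1$).

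Combining the two facts, the equivalence asserted in \ref{t4.13:s2} holds for the monotone pair $l_1, l_2$ if and only if
\[
\bigl(T(r, w_1) \simeq T(r, w_2)\bigr) \Leftrightarrow \Bigl(\bigl(V_0^{+}(T), d_{w_1}|_{V_0^{+}(T)\times V_0^{+}(T)}\bigr) \simeq \bigl(V_0^{+}(T), d_{w_2}|_{V_0^{+}(T)\times V_0^{+}(T)}\bigr)\Bigr)
\]
holds for the associated equidistant pair $w_1, w_2$. Since $l \mapsto \hat{w}*l$ is a bijection onto the set of equidistant weights of $T(r)$, quantifying over all monotone $l_1, l_2$ is the same as quantifying over all equidistant $w_1, w_2$; therefore \ref{t4.13:s2} is equivalent to statement \ref{c3.13:s2} of Theorem~\ref{c3.13}, which in turn is equivalent to $V_1^{+}(T) = \varnothing$, i.e.\ to \ref{t4.13:s1}. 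The only points I expect to demand attention are the matching of the two universal quantifiers under the duality bijection and the separate treatment of the one-vertex tree (where there is no weighted picture); all of the substantive computation is already contained in Lemma~\ref{l3.4} and Proposition~\ref{p3.3}, so no new estimates are required.
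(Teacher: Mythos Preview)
Your proposal is correct and follows exactly the route the paper intends: the paper does not give a separate proof of Theorem~\ref{t4.13} but simply declares it to be the dual form of Theorem~\ref{c3.13}, and your argument spells out precisely this duality using Proposition~\ref{p6.1}, Proposition~\ref{p3.3} and Lemma~\ref{l3.4}. Your explicit treatment of the one-vertex tree (where Theorem~\ref{c3.13} does not apply because it requires a nonempty tree) is a necessary detail that the paper leaves implicit.
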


\begin{proposition}\label{p6.10}
Let $(X, d)$ be a finite ultrametric space with $|X| \geqslant 2$, let $T_{X}$ be the representing tree of $(X, d)$ and let $T = T(r,w)$ be an equidistant tree such that $(V_0^{+}(T), d_{w}|_{V_0^{+}(T) \times V_0^{+}(T)}) \simeq (X, d)$. Then the inequality
\begin{equation}\label{p6.10e1}
|V(T)| \geqslant |\mathbf{B}_X|
\end{equation}
holds, where $\mathbf{B}_X$ is the set of balls of $(X, d)$. Moreover, the equality $|V(T)| = |\mathbf{B}_X|$ holds if and only if $T_X \simeq T(r,w)$.
\end{proposition}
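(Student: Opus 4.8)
The plan is to translate everything into the language of representing trees and their balleans, and then to use the structural results already available in the excerpt. First I would establish the inequality \eqref{p6.10e1}. Since $(V_0^{+}(T), d_w|_{V_0^{+}(T)\times V_0^{+}(T)}) \simeq (X, d)$, the Hausdorff-ballean of the left-hand space is isometric to $(\mathbf{B}_X, d_H)$. By Proposition~\ref{p3.8} there is an equidistant tree $T^{\nabla} = T^{\nabla}(r^{\nabla}, w^{\nabla})$ with $V_1^{+}(T^{\nabla}) = \varnothing$, with $V_0^{+}(T) = V_0^{+}(T^{\nabla})$, and with $d_w$ and $d_{w^\nabla}$ agreeing on $V_0^{+}$. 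The natural claim is that this $T^\nabla$, as a weighted tree, is essentially the "minimal" equidistant realization, and that $|V(T^{\nabla})| = |\mathbf{B}_X|$. To see the latter I would invoke Lemma~\ref{l6.4}: for $u_1, u_2 \in V_0^{+}(T^\nabla)$, the distance $d_{w^\nabla}(u_1, u_2)$ equals $\max_{v\in V(P)} l(v)$ where $l = \hat{l}*w^\nabla$, and since $V_1^{+}(T^\nabla) = \varnothing$ the monotone tree $T^\nabla(r^\nabla, l)$ satisfies the hypotheses of Theorem~\ref{t1.7}, so it is isomorphic to the representing tree of the ultrametric space it generates on $V_0^{+}(T^\nabla)$ --- which is $(X,d)$. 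Hence $T^\nabla(r^\nabla, l) \simeq T_X$, giving $|V(T^\nabla)| = |V(T_X)| = |\mathbf{B}_X|$ by Proposition~\ref{p2.8}. Since $V(T^\nabla) \subseteq V(T)$, inequality \eqref{p6.10e1} follows.

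Next I would prove the "if" direction of the equality characterization: if $T_X \simeq T(r, w)$ in the sense of weighted rooted trees --- wait, here one must be careful, because $T_X$ is a \emph{labeled} rooted tree while $T(r,w)$ is \emph{weighted}. The intended reading (consistent with the preceding theorems) is $T_X \simeq T(r, \hat{l}*w)$ as labeled rooted trees, equivalently $T(r, w) \simeq$ the equidistant weighting of $T_X$ given by Proposition~\ref{p6.1}. Under that identification $|V(T)| = |V(T_X)| = |\mathbf{B}_X|$ is immediate from Proposition~\ref{p2.8}.

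The harder direction is "only if": assume $|V(T)| = |\mathbf{B}_X|$ and deduce $T_X \simeq T(r,w)$. From the first part we have $|V(T^\nabla)| = |\mathbf{B}_X| = |V(T)|$, and since $V(T^\nabla) \subseteq V(T)$ this forces $V(T^\nabla) = V(T)$. By \eqref{p3.8:e1} this means $|V_1^{+}(T)| = 0$, i.e. $V_1^{+}(T) = \varnothing$. So the tree $T$ already has no out-degree-one vertices. Now I would apply Lemma~\ref{l6.4} directly to $T$: with $l = \hat{l}*w$, for all $u_1, u_2 \in V_0^{+}(T)$ we have $d_w(u_1, u_2) = \max_{v \in V(P)} l(v)$, so $l$ restricted along paths between $V_0^{+}$-vertices reconstructs $d$. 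Since $V_1^{+}(T) = \varnothing$ and $w$ is equidistant with the monotone labeling $l = \hat{l}*w$, the labeled rooted tree $T(r, l)$ satisfies condition~\ref{t1.7:c1} of Theorem~\ref{t1.7} --- here I should check that $l^{-1}(0) = V_0^{+}(T)$ (part of monotonicity) together with $\delta^{+} \ne 1$ everywhere gives exactly the hypotheses, in particular $(\delta^{+}(u) = 0) \Leftrightarrow (l(u) = 0)$. Hence $T(r,l) \simeq T_Y$ for the finite ultrametric space $(Y, \rho)$ it represents; but by Lemma~\ref{l6.4} that space, carried by $V_0^{+}(T) = L(T(r,l))$, is precisely $(X,d)$ up to isometry. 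By Theorem~\ref{t2.7}, $T_Y \simeq T_X$, and therefore $T_X \simeq T(r, l) = T(r, \hat{l}*w)$, which is the desired conclusion $T_X \simeq T(r,w)$ under the standing identification via Proposition~\ref{p6.1}.

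The main obstacle I anticipate is bookkeeping about \emph{which} notion of isomorphism and \emph{which} object ($T_X$ as labeled versus $T(r,w)$ as weighted) is meant in the statement "$T_X \simeq T(r,w)$", and making the bridge clean: one must use Proposition~\ref{p6.1} and Proposition~\ref{p3.3} to pass between the equidistant weight $w$ and its monotone labeling $l = \hat{l}*w$ without circularity, and Theorem~\ref{t1.7} applies to labeled trees, so the chain $T(r,w) \leftrightarrow T(r, \hat{l}*w) \simeq T_X$ has to be assembled in exactly that order. A secondary subtlety is verifying that passing from $T$ to $T^\nabla$ genuinely removes \emph{all} and \emph{only} the out-degree-one vertices while preserving the restriction of the metric to $V_0^{+}$ --- but this is exactly the content of Proposition~\ref{p3.8}, so it may be cited rather than reproved. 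Once those identifications are pinned down, each implication is a short composition of Lemma~\ref{l6.4}, Theorem~\ref{t1.7}, Theorem~\ref{t2.7}, and Proposition~\ref{p2.8}.
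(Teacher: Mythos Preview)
Your argument is correct and follows essentially the same route as the paper's proof. The only difference is organizational: the paper equips $T_X$ with the equidistant weight $\hat{w}*l$ and then invokes Proposition~\ref{p6.9} (itself a consequence of Proposition~\ref{p3.3} and Theorem~\ref{t2.7}) to obtain $T^{\nabla}(r^{\nabla},w^{\nabla}) \simeq T_X(X,\hat{w}*l)$ in one stroke, whereas you work in the dual direction---equipping $T^{\nabla}$ with the monotone labeling $\hat{l}*w^{\nabla}$ and appealing to Theorem~\ref{t1.7} together with Theorem~\ref{t2.7} to reach the same conclusion. Your careful discussion of what ``$T_X \simeq T(r,w)$'' means (via the correspondence of Proposition~\ref{p6.1}) is a point the paper leaves implicit; one small omission on your side is that invoking Proposition~\ref{p3.8} requires some $v$ with $\delta^{+}(v)\geqslant 2$, which follows from $|V_0^{+}(T)|=|X|\geqslant 2$ but should be said.
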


\begin{proof}
Let us consider $l \colon V(T_X) \to \mathbb{R}^{+}$ defined by~\eqref{e2.7}. By Proposition~\ref{p6.1}, the weight $\hat{w}*l \colon E(T_X) \to \mathbb{R}^{+}$ is equidistant. Using Lemma~\ref{l6.4} we obtain that $(X, d) \simeq (V_0^{+}(T), d_{\hat{w}*l}|_{V_0^{+}(T_{X}) \times V_0^{+}(T_{X})})$. Consequently, 
\[
\left(V_0^{+}(T_{X}), d_{\hat{w}*l}|_{V_0^{+}(T_{X}) \times V_0^{+}(T_{X})}\right) \simeq \left(V_0^{+}(T), d_{w}|_{V_0^{+}(T) \times V_0^{+}(T)}\right)
\]
is valid. Now from Proposition~\ref{p6.9} it follows that 
\begin{equation}\label{p6.10e2}
T^{\nabla} (r^{\nabla}, w^{\nabla}) = T_X(X, \hat{w}*l).
\end{equation}
(Note that $T^{\nabla}_X = T_X$ holds because $V_0^{+}(T_X) = \varnothing$). Since 
\[
V(T^{\nabla}, r^{\nabla}, w^{\nabla}) \subseteq V(T, r, w)
\]
and $|V(T_{X})| = |\mathbf{B}_{X}|$ (see Proposition~\ref{p2.8}), inequality~\eqref{p6.10e1} follows.

Suppose that $T_X \simeq T(r, w)$. Then we have 
\[
|V(T(r, w))| = |V(T_X)| = |\mathbf{B}_X|.
\]
Now, to complete the prove note that $|V(T_{X})| = |\mathbf{B}_{X}|$ holds if and only if
\[
T^{\nabla}(r^{\nabla}, w^{\nabla}) = T(r,w),
\]
so the describe isomorphism of $T_X$ and $T(r,w)$ as rooted trees follows from~\eqref{p6.10e2}.
\end{proof}

\section{Planted equidistant trees and ultrametrics}\label{sec5}

Recall that a rooted tree $T = T(r)$ is planted if $\delta^{+}(r) = 1$ holds.

The following propositions clarify the ``ultrametric'' meaning of the constant \(K\) from the definition of equidistant trees (see formula~\eqref{e6.1}).

\begin{proposition}\label{p4.1}
Let \(T = T(r, w)\) be an equidistant tree and let \(d_1\) be the restriction of the additive metric \(d_w\) on the set \(V_1^{+}(T) \times V_1^{+}(T)\). Then the diameter \(\diam(V_1^{+}(T))\) of ultrametric space \((V_1^{+}(T), d_1)\) satisfies the inequality
\begin{equation}\label{p4.1:e1}
\diam(V_1^{+}(T)) \leq 2K.
\end{equation}
This inequality is strict if and only if \(T\) is planted.
\end{proposition}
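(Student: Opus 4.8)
The plan is to argue directly with the additive metric $d_w$ on $V(T)$, using only the defining property of an equidistant weight — that $d_w(r,u_0)=K$ for every $u_0\in V_0^{+}(T)$ — together with the triangle inequality; recall that $(V_0^{+}(T),d_1)$ is ultrametric by Theorem~\ref{t6.4}, so its diameter is well defined. First note that $|V(T)|\geqslant 2$, since a weighted tree is nonempty; hence $r$ is not isolated, $r\notin V_0^{+}(T)$, and the path joining $r$ to any $u_0\in V_0^{+}(T)$ contains at least one edge, so $K>0$. For distinct $u,v\in V_0^{+}(T)$ the triangle inequality in $(V(T),d_w)$ yields
\[
d_w(u,v)\leqslant d_w(u,r)+d_w(r,v)=2K,
\]
while if $|V_0^{+}(T)|\leqslant 1$ the diameter equals $0$; in every case $\diam(V_0^{+}(T))\leqslant 2K$, which is \eqref{p4.1:e1}.

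For the strictness statement the key remark is a geodesic dichotomy: for distinct $u,v\in V_0^{+}(T)$ one has $d_w(u,v)<d_w(u,r)+d_w(r,v)$ precisely when $r$ does not lie on the path in $T$ joining $u$ and $v$, because, $w$ being strictly positive, the concatenation of the paths joining $u$ to $r$ and $r$ to $v$ is a walk from $u$ to $v$ that is strictly longer than the geodesic whenever it is not the geodesic. If $T$ is planted, let $c$ be the unique direct successor of $r$; then every vertex other than $r$ equals $c$ or is a successor of $c$, so any path joining two points of $V_0^{+}(T)$ stays inside $T_c$ and misses $r$. Hence $d_w(u,v)<2K$ for all distinct $u,v\in V_0^{+}(T)$, and since $V_0^{+}(T)$ is finite, $\diam(V_0^{+}(T))<2K$ (the case $|V_0^{+}(T)|\leqslant 1$ giving $0<2K$ at once, as $K>0$). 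Conversely, if $T$ is not planted, then $\delta^{+}(r)\neq 1$, and since $\delta^{+}(r)\geqslant 1$ we get $\delta^{+}(r)\geqslant 2$; choosing two distinct direct successors $c_1,c_2$ of $r$ and descending within each $T_{c_i}$ along direct successors until a vertex of out-degree $0$ is reached — possible because $T$ is finite — produces distinct leaves $u_i\in V_0^{+}(T)\cap V(T_{c_i})$ such that the path in $T$ joining $u_1$ and $u_2$ passes through $r$, whence $d_w(u_1,u_2)=d_w(u_1,r)+d_w(r,u_2)=2K$ and $\diam(V_0^{+}(T))=2K$. Combined with the upper bound, this shows that the inequality in \eqref{p4.1:e1} is strict if and only if $T$ is planted.

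The only points that require any care are the geodesic dichotomy (equivalently, that $r$ lies on the path joining $u$ and $v$ exactly when $u$ and $v$ belong to different branches hanging from $r$) and the finite-descent construction of the witnessing leaves $u_1$ and $u_2$; both are elementary facts about finite rooted trees, so I anticipate no genuine obstacle — the whole argument reduces to the triangle inequality together with the definition of an equidistant weight.
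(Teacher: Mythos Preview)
The paper omits the proof of this proposition entirely (``The proofs of these propositions are straightforward and we omit it here''), so there is no argument to compare against. Your proof is correct and is the natural direct argument. You have also silently corrected what is evidently a typo in the statement: the paper writes $V_1^{+}(T)$ throughout, but the result only makes sense for $V_0^{+}(T)$ (this is the set on which Theorem~\ref{t6.4} guarantees ultrametricity, and is what the surrounding discussion of the constant $K$ refers to); your proof works with $V_0^{+}(T)$, as it should.

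One small point of presentation: the bound $K>0$ and the nonemptiness of $V_0^{+}(T)$ are used only in the planted case with $|V_0^{+}(T)|\leqslant 1$, so you might move that sentence closer to where it is needed. Otherwise the argument is complete: the triangle inequality gives the upper bound, and the tree-geodesic dichotomy (equality in $d_w(u,v)\leqslant d_w(u,r)+d_w(r,v)$ iff $r$ lies on the path joining $u$ and $v$) handles both directions of the strictness claim.
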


\begin{proposition}\label{p6.11}
Let $T = T(w)$ be a weighted tree with \(|V(T)| \geqslant 2\) and let $L = L(T)$ be the set of leaves of $T$. Suppose the restriction $\rho := d_{w} |_{L \times L}$ is an ultrametric on $L$. Then the following conditions are equivalent:
\begin{enumerate}
\item There is $r \in L$ such that the weighted rooted tree $T(r, w)$ is equidistant.
\item The ultrametric space $(L, \rho)$ is a sphere with an added center, i.e., there are $c \in L$ and $t > 0$ such that the equality $L = S_{t}(c) \cup \{c\}$ holds, where $S_{t}(c) := \{x \in L \colon \rho(x, c) = t\}$.
\item There is $r \in V(T)$ such that $T(r, w)$ is planted and equidistant.
\end{enumerate}
\end{proposition}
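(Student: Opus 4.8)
The plan is to establish the three conditions of the proposition equivalent via the cycle (i) $\Rightarrow$ (ii) $\Rightarrow$ (iii) $\Rightarrow$ (i). Essentially the whole proof is a matter of unwinding Definition~\ref{d3.1} of an equidistant tree together with the bookkeeping relating $V_{0}^{+}(T)$ and $L(T)$; the hypothesis that $\rho$ is an ultrametric enters only to guarantee that $\rho$ is a genuine metric (so that the radius produced below is strictly positive) and to make the phrase ``sphere with an added center'' meaningful.

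The one preliminary fact I would record first is a consequence of $|V(T)| \geqslant 2$ and the connectedness of $T$: a vertex $r$ of $T$ has $\delta(r) = 1$ as soon as $r \in L$, so then $\delta^{+}(r) = \delta(r) = 1$ and $T(r)$ is planted; conversely, if $T(r)$ is planted, then $\delta(r) = \delta^{+}(r) = 1 < 2$ and $r \in L$. Thus \emph{the root of a tree with at least two vertices is a leaf if and only if the tree is planted}. Moreover, in this situation $V_{0}^{+}(T) = L(T) \setminus \{r\}$ holds (this is the remark following~\eqref{e3.1}: for $v \neq r$ one has $\delta^{+}(v) = 0 \Leftrightarrow v \in L(T)$, while $\delta^{+}(r) = \delta(r) = 1 \neq 0$). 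Combining these, for $r \in L$ the tree $T(r, w)$ is equidistant with constant $K$ exactly when $d_{w}(r, x) = K$ for every $x \in L \setminus \{r\}$, i.e.\ exactly when $\rho(r, x) = K$ for all $x \in L$, $x \neq r$; and any such $K$ is strictly positive since $\rho$ is a metric and $r \neq x$.

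Granting this, the three implications are short. For (i) $\Rightarrow$ (ii): pick $r \in L$ with $T(r, w)$ equidistant, put $c = r$ and $t = K > 0$; the characterization above gives $S_{t}(c) = \{x \in L \colon \rho(x, c) = t\} = L \setminus \{c\}$, hence $L = S_{t}(c) \cup \{c\}$. For (ii) $\Rightarrow$ (iii): given $c \in L$ and $t > 0$ with $L = S_{t}(c) \cup \{c\}$, note $c \notin S_{t}(c)$ because $t > 0$, so $\rho(c, x) = t$ for every $x \in L \setminus \{c\}$; since $c \in L$, the preliminary fact shows $T(c)$ is planted with $V_{0}^{+}(T) = L \setminus \{c\}$, and the displayed distances say precisely that $T(c, w)$ is equidistant with constant $K = t$, which is (iii) with $r = c$. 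Finally (iii) $\Rightarrow$ (i) is immediate: a planted equidistant $T(r, w)$ has $\delta^{+}(r) = 1$, so $r$ is a leaf of $T$ and (i) holds with the same $r$.

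I do not expect a genuine obstacle; the argument is purely definitional. The only points needing a little care are the small verifications guaranteed by $|V(T)| \geqslant 2$ — that a root which is a leaf is not an isolated vertex, so that $V_{0}^{+}(T)$ is a nonempty set and the equidistance constant $K$ makes sense, and that $L$ has at least two elements so that the ``added center'' in (ii) is genuinely distinct from the points of the sphere — together with the careful translation, via the structural facts about $\delta^{+}$ recalled in Section~\ref{sec2}, between the set $V_{0}^{+}(T)$ featuring in Definition~\ref{d3.1} and the leaf set $L(T)$ featuring in (ii).
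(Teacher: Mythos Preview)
Your proof is correct. The paper itself omits the argument entirely, stating only that ``the proofs of these propositions are straightforward and we omit it here,'' so there is no detailed approach to compare against; your cycle (i) $\Rightarrow$ (ii) $\Rightarrow$ (iii) $\Rightarrow$ (i), driven by the observation that for $|V(T)|\geqslant 2$ a root is a leaf iff the tree is planted and that then $V_0^{+}(T)=L(T)\setminus\{r\}$, is exactly the kind of direct unwinding of Definition~\ref{d3.1} the authors evidently had in mind.
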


The proofs of these propositions are straightforward and we omit it here.

It should be noted that for some planted equidistant trees $T = T(r, w)$ the restriction $d_{w} |_{L \times L}$ on the set $L$ of the leaves of $T$ is not an ultrametric (see Figure~\ref{fig10}). The following proposition describes the geometry of planted equidistant trees $T(r, w)$ for which $d_{w} |_{L \times L}$ is an ultrametric.

\begin{proposition}\label{p6.12}
Let $T = T(r,w)$ be planted and equidistant and let
\begin{equation*}
V_{2}^{++}(T) := \{v \in V(T) \colon \delta^{+}(v) \geqslant 2\} \neq \varnothing.
\end{equation*}
Then the following conditions are equivalent:
\begin{enumerate}
\item\label{p6.12s1} The metric $\rho = d_{w} |_{L \times L}$ is an ultrametric on $L$.
\item\label{p6.12s2} The inequality 
\begin{equation}\label{p6.12e2}
2\operatorname{dist} (r, V_{2}^{++}(T)) \geqslant \operatorname{dist} (r, V_{0}^{+}(T))
\end{equation}
holds, where, for every nonempty $A \subset V(T)$,
$$
\operatorname{dist}(r, A) := \min\{d_{w}(r, a) \colon a \in A\}.
$$
\end{enumerate}
\end{proposition}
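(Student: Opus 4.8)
The plan is to reduce the strong triangle inequality on $(L,\rho)$ to a single numerical inequality between the quantities $\operatorname{dist}(r,V_0^{+}(T))$ and $\operatorname{dist}(r,V_2^{++}(T))$, exploiting that $T$ is planted and equidistant. First I would record the structure of $L$. Since $T$ is planted, the root $r$ is a leaf, whereas a vertex $v\neq r$ is a leaf exactly when $\delta^{+}(v)=0$; hence $L=V_0^{+}(T)\cup\{r\}$ with $r\notin V_0^{+}(T)$, and $|V_0^{+}(T)|\geqslant 2$ because $V_2^{++}(T)\neq\varnothing$. Writing $K$ for the constant of Definition~\ref{d3.1}, the equidistant property gives $d_w(r,u)=K$ for every $u\in V_0^{+}(T)$, so $\operatorname{dist}(r,V_0^{+}(T))=K$ and $\rho(r,u)=K$ for all $u\in L\setminus\{r\}$.

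Next I would compute the distances inside $V_0^{+}(T)$. For distinct $x,y\in V_0^{+}(T)$, let $v^{*}$ be the last common vertex of the paths joining $r$ to $x$ and joining $r$ to $y$. Then $x$ and $y$ are leaves lying in the subtrees rooted at two distinct direct successors of $v^{*}$, so $v^{*}\in V_2^{++}(T)$; splitting the $x$--$y$ path at $v^{*}$ and using $d_w(r,x)=d_w(r,y)=K$ yields
\[
d_w(x,y)=d_w(x,v^{*})+d_w(v^{*},y)=2\bigl(K-d_w(r,v^{*})\bigr)
\]
(equivalently, apply Lemma~\ref{l6.4} and note that the associated monotone labeling attains its maximum along the $x$--$y$ path at $v^{*}$). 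Conversely every $v\in V_2^{++}(T)$ arises this way: descending from two distinct direct successors of $v$ down to vertices of out-degree $0$ produces $x,y\in V_0^{+}(T)$ whose last common vertex is $v$. Consequently
\[
\sup_{x,y\in V_0^{+}(T)}d_w(x,y)=2\bigl(K-\operatorname{dist}(r,V_2^{++}(T))\bigr).
\]

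Finally I would carry out the case analysis for the strong triangle inequality on $(L,\rho)$. By Theorem~\ref{t6.4} the restriction of $d_w$ to $V_0^{+}(T)\times V_0^{+}(T)$ is already an ultrametric, so only triples containing $r$ remain; among those, every instance in which $r$ is one of the two endpoints of the inequality reads $K\leqslant\max\{K,\rho(x,y)\}$ and is automatic, and the only genuine requirement comes from taking $r$ as the midpoint, namely $\rho(x,y)\leqslant\max\{\rho(x,r),\rho(r,y)\}=K$ for all $x,y\in V_0^{+}(T)$. Thus $\rho$ is an ultrametric on $L$ if and only if $d_w(x,y)\leqslant K$ for all $x,y\in V_0^{+}(T)$, i.e., by the displayed supremum, if and only if $2\bigl(K-\operatorname{dist}(r,V_2^{++}(T))\bigr)\leqslant K$; rearranging gives $2\operatorname{dist}(r,V_2^{++}(T))\geqslant K=\operatorname{dist}(r,V_0^{+}(T))$, which is the asserted equivalence.

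I expect the one delicate point to be the identity for $\sup_{x,y}d_w(x,y)$: one must check both that $d_w(x,y)=2\bigl(K-d_w(r,v^{*})\bigr)$ with $v^{*}\in V_2^{++}(T)$ for every pair, and that the member of $V_2^{++}(T)$ closest to $r$ is genuinely realised as a last common vertex of two elements of $V_0^{+}(T)$, so that the supremum is attained and equals $2\bigl(K-\operatorname{dist}(r,V_2^{++}(T))\bigr)$. The remaining steps are the routine triangle-inequality bookkeeping together with Theorem~\ref{t6.4}.
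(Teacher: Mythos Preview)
Your proposal is correct and follows essentially the same approach as the paper: both arguments invoke Theorem~\ref{t6.4} to reduce the ultrametric check to triples containing $r$, and both rest on the identity $d_w(x,y)=2\bigl(K-d_w(r,v^{*})\bigr)$ for $x,y\in V_0^{+}(T)$ with last common ancestor $v^{*}\in V_2^{++}(T)$. The only difference is packaging: the paper treats the two implications separately (choosing the $v^{*}$ closest to $r$ and producing a specific pair $x_1,x_2$ for \ref{p6.12s1}$\Rightarrow$\ref{p6.12s2}, then bounding an arbitrary pair via that same $v^{*}$ for \ref{p6.12s2}$\Rightarrow$\ref{p6.12s1}), whereas you compute $\sup_{x,y}d_w(x,y)=2\bigl(K-\operatorname{dist}(r,V_2^{++}(T))\bigr)$ once and read off both directions simultaneously, which is a slightly cleaner organisation of the same computation.
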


\begin{proof}
$\ref{p6.12s1} \Rightarrow \ref{p6.12s2}$ Suppose~\ref{p6.12s1} holds. We must prove inequality~\eqref{p6.12e2}. Let $v^* \in V_{2}^{++}(T)$ such that 
\begin{equation}\label{p6.12e3}
d_{w}(r, v^*) = \operatorname{dist}(r, V_{2}^{++}(T)).
\end{equation}
Let $P = (v_0, \ldots, v_n)$ be the path in $T$ with $v_0 = r$ and $v_n = v^*$. Since $P$ is a path, the inequality $\delta^{+} (v_i) \geqslant 1$ holds for every $i \in \{1, \ldots, n-1\}$. If there is $i_{1} \in \{1, \ldots, n-1\}$ such that $\delta^{+}(v_{i_1}) \geqslant 2$, then $v_{i_1} \in V_{2}^{++}(T)$ and we have
\begin{multline*}
d_{w} (r, v^{*}) = \sum_{j = 0}^{n-1} w(\{v_{j}, v_{j+1}\}) = \sum_{j = 0}^{i_1-1} w(\{v_{j}, v_{j+1}\}) + \sum_{j = i_1}^{n-1} w(\{v_{j}, v_{j+1}\}) \\
= d_{w} (r, v_{i_1}) + d_{w} (v_{i_1}, v^{*}) > d_{w} (r, v_{i_1}),
\end{multline*}
contrary to~\eqref{p6.12e3}.

Inequality $\delta^{+}(v^{*}) \geqslant 2$ implies that there exist two distinct direct successors $u_1$ and $u_2$ of $v^{*}$. Let $x_i \in V_{0}^{+}(T)$ be a successor of $u_i$, $i=1$, $2$. It follows from the definition of $d_{w}$ that
\begin{equation}\label{p6.12e4}
d_{w}(r, x_i) = d_{w}(r, v^{*}) + d_{w}(v^{*}, x_i)
\end{equation}
for $i=1$, $2$, and, in addition,
\begin{equation}\label{p6.12e5}
d_{w}(x_1, x_2) = d_{w}(x_1, v^{*}) + d_{w}(v^{*}, x_2).
\end{equation}
Since $T(r, w)$ is equidistant, we have
\begin{equation}\label{p6.12e6}
d_{w}(r, x_1) = d_{w}(r, x_2) = \operatorname{dist}(r, V_{0}^{+}(T)).
\end{equation}
Equalities~\eqref{p6.12e4}, \eqref{p6.12e5} and \eqref{p6.12e6} imply 
\begin{equation}\label{p6.12e7}
d_{w}(x_1, x_2) = 2(\operatorname{dist}(r, V_{0}^{+}(T)) - d_{w}(r, v^{*})).
\end{equation}
From the ultrametricity $(L, \rho)$ and~\eqref{p6.12e6} it follows that
\begin{equation}\label{p6.12e8}
d_{w}(x_1, x_2) \leqslant \max_{i = 1, 2} d_{w}(r, x_i) = \operatorname{dist}(r, V_{0}^{+}(T)).
\end{equation}
Now using~\eqref{p6.12e3}, \eqref{p6.12e7} and~\eqref{p6.12e8} we obtain
$$
d_{w}(x_1, x_2) = 2(\operatorname{dist}(r, V_{0}^{+}(T)) -\operatorname{dist}(r, V_{2}^{++}(T))) \leqslant \operatorname{dist}(r, V_{0}^{+}(T)).
$$
Inequality~\eqref{p6.12e2} follows.

$\ref{p6.12s2} \Rightarrow \ref{p6.12s1}$ Let inequality~\eqref{p6.12e2} hold. By Theorem~\ref{t6.4} the metric $d_{w}|_{V_{0}^{+}(T) \times V_{0}^{+}(T)}$ is an ultrametric. Consequently $\rho = d_{w}|_{L(T) \times L(T)}$ is an ultrametric on $L(T)$ if and only if the strong triangle inequalities
\begin{equation}\label{p6.12e9}
d_{w} (x_1, x_2) \leqslant \max \{d_{w} (r, x_1), d_{w} (r, x_2)\}
\end{equation}
and 
\begin{equation}\label{p6.12e10}
d_{w} (r, x_1) \leqslant \max \{d_{w} (x_1, x_2), d_{w} (r, x_2)\}
\end{equation}
and 
\begin{equation}\label{p6.12e10*}
d_{w} (r, x_2) \leqslant \max\{d_{w} (x_1, x_2), d_{w} (r, x_1)\}
\end{equation}
hold. Inequalities~\eqref{p6.12e10} and~\eqref{p6.12e10*} are trivial because $T(r, w)$ is equidistant. Let $v^{*}$ be a point of $V_{2}^{++}(T)$ satisfying~\eqref{p6.12e3}. The triangle inequality implies
\begin{equation}\label{p6.12e11}
d_{w} (x_1, x_2) \leqslant d_{w} (x_1, v^{*}) + d_{w} (v^{*}, x_2).
\end{equation}
The induced rooted subtree $T_{v^{*}}$ of $T(r)$ is equidistant with the weight $w|_{E(T_{v^{*}})}$. It implies
\begin{multline}\label{p6.12e12}
d_{w} (x_1, v^{*}) + d_{w} (v^{*}, x_2) = 2 \min_{v \in V_{0}^{+}(T_{v^*})} d_{w} (v^{*}, v) \\
= 2 (\min_{v \in V_{0}^{+}(T(r))} d_{w} (r, v) - d_w(r, v^{*})) = 2(\operatorname{dist}(r, V_{0}^{+}(T)) -\operatorname{dist}(r, V_{2}^{++}(T)))
\end{multline}
Moreover, we have
\begin{equation}\label{p6.12e13}
\max\{d_{w} (r, x_1), d_{w} (r, x_2)\} = \min_{v \in V_{0}^{+}(T(r))} d_{w} (r, v) = \operatorname{dist}(r, V_{0}^{+}(T)).
\end{equation}
Consequently it suffices to show that 
$$
2(\operatorname{dist}(r, V_{0}^{+}(T)) -\operatorname{dist}(r, V_{2}^{++}(T))) \leqslant \operatorname{dist}(r, V_{0}^{+}(T)).
$$
The last inequality is equivalent to inequality~\eqref{p6.12e2}.
\end{proof}

Let \(T = T(w)\) be a weighted tree. We say that a node \(v^{*} \in V(T)\) is a \emph{center} of \(T\) if the rooted tree \(T(r, w)\) is equidistant with \(r = v^{*}\). The following example (see Figure~\ref{fig3}) shows that several distinct nodes of \(T\) can be centers of \(T\).

\begin{figure}[h]
\begin{center}
\begin{tikzpicture}
\tikzstyle{level 1}=[level distance=15mm,sibling distance=15mm]
\tikzstyle{level 2}=[level distance=15mm,sibling distance=10mm]
\tikzstyle{level 3}=[level distance=15mm,sibling distance=6mm]
\tikzset{
solid node/.style={circle,draw,inner sep=1.5,fill=black},
hollow node/.style={circle,draw,inner sep=1.5}
}

\node [label=left:{\(T(w)\)}] at (2,0) {};
\node (1) [solid node, label=above:{\(v_1\)}] at (0,0) {}
child {node[solid node, label=above:{\(v_2\)}]{}
	child{node[solid node, label=below:{\(v_{6}\)}] {} edge from parent node[left]{$2$}}
	child{node[solid node, label=below:{\(v_{7}\)}] {} edge from parent node[right]{$2$}}
	edge from parent node[above left]{$2$}}
child {node[solid node, label=above left:{\(v_3\)}]{}
	child{node[solid node, label=below:{\(v_{8}\)}]{} edge from parent node[left]{$2$}}
	child{node[solid node, label=below:{\(v_{9}\)}]{} edge from parent node[right]{$2$}}
	edge from parent node[left]{$2$}}
child {node[solid node, label=below:{\(v_4\)}]{} edge from parent node[left]{$4$}}
child {node[solid node, label=below:{\(v_5\)}]{} edge from parent node[above right]{$4$}};
\end{tikzpicture}
\end{center}
\caption{\(T(r,w)\) is equidistant if and only if \(r \in \{v_1, v_4, v_5\}\).}
\label{fig3}
\end{figure}
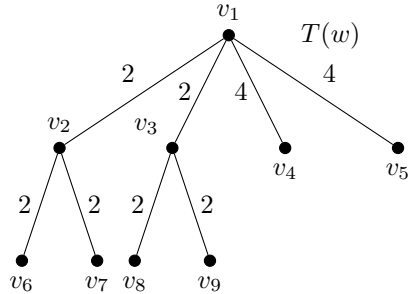

\begin{proposition}\label{p3.9}
Let \(T\) be a nonempty tree. Then the following statements are equivalent:
\begin{enumerate}
\item\label{p3.9:s1} \(T\) is star.
\item\label{p3.9:s2} There is a strictly positive weight \(w \colon E(T) \to \mathbb{R}^{+}\) such that the weighted rooted tree \(T(r, w)\) is equidistant for every \(r \in V(T)\).
\end{enumerate}
\end{proposition}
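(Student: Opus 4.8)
The plan is to treat the two implications separately. For \(\ref{p3.9:s1} \Rightarrow \ref{p3.9:s2}\) I would simply take the constant weight \(w(e) = 1\) for every \(e \in E(T)\) and verify the equidistance condition \eqref{e6.1} for each root by a short case analysis. Write the star \(T\) with center \(c\) and leaves \(x_1, \ldots, x_n\). If \(n = 1\), then \(T\) is a single edge; for either choice of root \(r\) the set \(V_0^{+}(T)\) is a singleton (by the remark following \eqref{e3.1}, since \(r\) is a leaf and \(|V(T)| = 2\)), so equidistance is automatic. If \(n \geqslant 2\), then \(c\) is the only internal vertex. Taking \(r = c\) gives \(V_0^{+}(T) = \{x_1, \ldots, x_n\}\) with every leaf at distance \(1\) from \(c\), so \eqref{e6.1} holds with \(K = 1\); taking \(r = x_j\) gives \(V_0^{+}(T) = \{x_i : i \neq j\}\), and each such \(x_i\) is at distance \(2\) from \(x_j\) along the path through \(c\), so \eqref{e6.1} holds with \(K = 2\). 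Hence \(T(r, w)\) is equidistant for every \(r \in V(T)\).

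For \(\ref{p3.9:s2} \Rightarrow \ref{p3.9:s1}\) I would argue by contraposition. The first step is the elementary observation that a nonempty tree is \emph{not} a star if and only if it contains a path on four distinct vertices \(v_0, v_1, v_2, v_3\); in that case \(v_1\) and \(v_2\) are two \emph{adjacent internal} vertices. Fix such an edge \(e = \{u, v\}\), with \(u = v_1\), \(v = v_2\), and let \(T_u\) and \(T_v\) be the two components obtained from \(T\) by deleting \(e\), with \(u \in V(T_u)\) and \(v \in V(T_v)\). Since \(u\) and \(v\) are internal, each of \(T_u\), \(T_v\) has at least two vertices and therefore contains a leaf of \(T\) distinct from \(u\), respectively from \(v\); call these \(x \in V(T_u)\) and \(y \in V(T_v)\).

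Now suppose towards a contradiction that some strictly positive weight \(w\) makes both \(T(u, w)\) and \(T(v, w)\) equidistant. Because \(u\) and \(v\) are internal, the remark following \eqref{e3.1} yields \(V_0^{+} = L(T)\) for these two rootings, so there are constants \(K_u\) and \(K_v\) with \(d_w(u, z) = K_u\) and \(d_w(v, z) = K_v\) for every leaf \(z\) of \(T\). The unique path in \(T\) joining \(v\) to the leaf \(x \in V(T_u)\) must use the edge \(e\) and has \(v\) as one endpoint, hence \(d_w(v, x) = d_w(u, x) + w(e)\), that is, \(K_v = K_u + w(e)\); symmetrically, the path joining \(u\) to \(y \in V(T_v)\) gives \(K_u = K_v + w(e)\). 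Adding these two equalities forces \(w(e) = 0\), contradicting the strict positivity of \(w\). Thus at least one of \(T(u, w)\), \(T(v, w)\) is not equidistant, so \(\ref{p3.9:s2}\) fails; this is the desired contraposition.

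The argument has no serious obstacle; the point requiring the most care is the faithful translation of ``\(T(r, w)\) is equidistant'' into the metric statement ``\(d_w(r, \cdot)\) is constant on \(V_0^{+}(T)\)'', using the description of \(V_0^{+}(T)\) from the remark following \eqref{e3.1}, together with the bookkeeping of which vertices lie in \(T_u\) and which in \(T_v\). Once that is set up, the symmetric pair of equations \(K_v = K_u + w(e)\) and \(K_u = K_v + w(e)\) is the whole of the matter. One should also not forget the degenerate case of a single edge in the first implication.
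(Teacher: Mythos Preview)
Your proof is correct and follows essentially the same strategy as the paper: for \(\ref{p3.9:s1}\Rightarrow\ref{p3.9:s2}\) both use a constant weight, and for the converse both pick two internal vertices, root at each in turn, apply equidistance to leaves lying on opposite sides, and derive a contradiction with strict positivity. The only cosmetic difference is that you take the two internal vertices to be adjacent (obtaining \(w(e)=0\) directly), whereas the paper allows them to be arbitrary internal vertices \(u\), \(v^{*}\) and deduces \(d_w(u,v^{*})=0\); your choice makes the final arithmetic marginally cleaner.
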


\begin{proof}
\(\ref{p3.9:s1} \Rightarrow \ref{p3.9:s2}\). Let \(T\) be a star and let \(c\) be a strictly positive real number. Then \(T(r, w)\) is equidistant for every \(r \in V(T)\) if we define \(w \colon E(T) \to \mathbb{R}^{+}\) as
\[
w(e) = c
\]
for every \(e \in E(T)\).

\(\ref{p3.9:s2} \Rightarrow \ref{p3.9:s1}\). Let statement \ref{p3.9:s2} hold. It is clear that \ref{p3.9:s2} holds if \(|V(G)| = 2\). Suppose \(|V(G)| \geqslant 3\). Then there is \(v^{*} \in V(T)\) such that \(\delta(v^{*}) \geqslant 2\) holds. We claim that, for every \(u \in V(T)\), the inequality \(\delta(u) \geqslant 2\) implies the equality \(v^{*} = u\). Indeed, if \(u \neq v^{*}\) and \(u\), \(v^{*} \in V(T)\), then there is a path \(P_0 = \{v_1, \ldots, v_m\}\) in \(T\) with \(v_1 = u\) and \(v_m = v^{*}\). The inequalities \(\delta(u) \geqslant 2\) and \(\delta(v^{*}) \geqslant 2\) imply that there is \(v_{-1}\) and \(v_{m+1}\) such that \(v_{-1}\), \(v_{m+1} \notin V(P_0)\) and 
\[
\{v_{-1}, v_1\}, \{v_{m}, v_{m+1}\} \in E(T).
\]
Since \(T\) is an acyclic graph, we have \(v_{-1} \neq v_{m+1}\). Consequently, 
\[
P_1 = \{v_{-1}, v_1, \ldots, v_m, v_{m+1}\}
\] 
is a path in \(T\) and \(P_1 \supseteq P_0\). If \(\delta(v_{-1}) \geqslant 2\) or \(\delta(v_m+1) \geqslant 2\) holds, then we can find a path \(P_2 \supseteq P_1\) such that \(P_2 \subseteq T\) and \(V(P_1)\) is a proper subset of \(V(P_2)\). Since \(T\) is a finite tree, there is a path \(P \supseteq P_0\), \(P \subseteq T\), joining some leaves \(a\), \(b \in L(T)\),
\[
P = (a, \ldots, u \ldots, v^{*}, \ldots, b).
\]
The rooted trees \(T(u, w)\) and  \(T(v^{*}, w)\) are equidistant. Hence, the equalities
\[
d_w(a, u) = d_w(u, b) \quad \text{and} \quad d_w(a, v^{*}) = d_w(v^{*}, b)
\]
hold. Using these equalities and the definition of additive metric \(d_w\) we obtain
\begin{multline*}
d_w(a, u) + d_w(v^{*}, b) = d_w(u, b) + d_w(a, v^{*}) \\
= (d_w(u, v^{*}) + d_w(v^{*}, b)) + (d_w(a, u) + d_w(u, v^{*})).
\end{multline*}
That implies \(d(u, v^{*}) = 0\). Hence, \(u = v^{*}\) holds, contrary to \(u \neq v^{*}\). 

A connected graph \(G\) is a tree if and only if
\begin{equation}\label{p3.9:e1}
|V(G)| = |E(G)| + 1
\end{equation}
(\cite[Corollary~1.5.3]{Die2005}). Moreover, for every graph \(G\), we have
\begin{equation}\label{p3.9:e2}
\sum_{v \in V(G)} \delta(v) = 2|E(G)|
\end{equation}
(\cite[Theorem~1.1]{BM}). 

It was shown above that \(\delta(v) = 1\) holds for every \(v \in V(T)\) whenever \(v \neq v^{*}\). Consequently, using~\eqref{p3.9:e1} we can rewrite \eqref{p3.9:e2} as
\[
\delta(v^{*}) + |V(G)| - 1 = 2 (|V(G)| - 1).
\]
Thus, 
\[
\delta(v^{*}) = |V(G)| - 1
\]
holds. Consequently, the vertexes \(v^{*}\), \(v\) are adjacent for every \(v \neq v^{*}\) and \(\{v_1, v_2\} \notin E(T)\) if \(v_1 \neq v^{*}\) and \(v_2 \neq v^{*}\). Thus, \(T\) is a star.
\end{proof}

\begin{remark}\label{r3.10}
Other curious characterizations of stars are given by Corollary~4.9 in \cite{DMV2013AC} and by Corollary~8 in~\cite{DP2013SM}. These characterizations as well as Proposition~\ref{p3.9} describe some extremal properties of weighted stars.
\end{remark}

The results of this paper have some natural analogies in the case when weights and labelings of rooted trees are some functions whose range is the positive cone $E_+$ of an ordered vector space $E$ and the ultrametrics are replaced by some ``generalized ultrametrics'' taking their values in $E_+$.

\section*{Funding}
This investigation was partially supported in the frame of the project: Development of Mathematical Models, Numerical and Analytical Methods, and Algorithms for Solving Modern Problems of Biomedical Research. State registration number: 0117U002165.


\end{document}